\newtheorem{theorem}{Theorem}[section]
\newtheorem*{theorem*}{Theorem}
\newtheorem*{problem*}{Problem}
\newtheorem{lemma}[theorem]{Lemma}
\newtheorem{proposition}[theorem]{Proposition}
\newtheorem{corollary}[theorem]{Corollary}
\newtheorem{definition}[theorem]{Definition}
\theoremstyle{remark}
\newtheorem{remark}[theorem]{Remark}
   \newcommand{\Rdb}{\mbox{$\mathbb{R}$}}
   \renewcommand{\H}{\mbox{${\mathcal H}$}}
   \newcommand{\N}{\mbox{${\mathbb N}$}}
   \def\R{{\mathbb{R}}}
   \renewcommand{\S}{\mbox{${\mathcal S}$}}
   \def\Sc{{\mathcal{S}}}
   \def\Im{{\mathrm{Im}\,}}
   \def\tr{{\mathrm{Tr}\,}}
\begin{document}

\title[Higher order $\Sc^2$-differentiability]
{Higher order $\Sc^2$-differentiability and application to Koplienko trace formula}

\author[C. Coine]{Cl\'ement Coine}
\email{clement.coine1@gmail.com $\,\&\,$ clement.coine@univ-fcomte.fr}
\author[C. Le Merdy]{Christian Le Merdy}
\email{clemerdy@univ-fcomte.fr}
\author[A. Skripka]{Anna Skripka}
\email{skripka@math.unm.edu}
\author[F. Sukochev]{Fedor Sukochev}
\email{f.sukochev@unsw.edu.au}

\address{C.C. : School of Mathematics and Statistics,
Central South University, Changsha 410085, People’s Republic of China
$\,\&\,$
Laboratoire de Math\'ematiques de Besan\c con, UMR 6623, CNRS, Universit\'e Bourgogne Franche-Comt\'e,
25030 Besan\c{c}on Cedex, France}
\address{C.L. : Laboratoire de Math\'ematiques de Besan\c con, UMR 6623, CNRS,
Universit\'e Bourgogne Franche-Comt\'e,
25030 Besan\c{c}on Cedex, France}
\address{A.S. : Department of Mathematics and Statistics, MSC01 1115,
University of New Mexico, Albuquerque, NM 87131, USA}
\address{F.S. : School of Mathematics and Statistics, University of NSW,
Kensington NSW 2052, Australia}

\subjclass[2000]
{47B49, 47A55, 46L52}

\keywords{Differentiation of operator functions, Hilbert space factorization, perturbation theory}

\date{\today}

\maketitle

\begin{abstract}
Let $A$ be a selfadjoint operator in a separable Hilbert space, $K$ a selfadjoint Hilbert-Schmidt operator, and $f\in C^n(\mathbb{R})$. We establish that $\varphi(t)=f(A+tK)-f(A)$ is $n$-times continuously differentiable on $\mathbb{R}$ in the Hilbert-Schmidt norm, provided either $A$ is bounded or the derivatives $f^{(i)}$, $i=1,\ldots,n$, are bounded. This substantially extends the results of \cite{ACDS} on higher order differentiability of $\varphi$ in the Hilbert-Schmidt norm for $f$ in a certain Wiener class. As an application of the second order $\Sc^2$-differentiability, we extend the Koplienko trace formula
from the Besov class $B_{\infty1}^2(\R)$ \cite{Peller2005} to functions $f$ for which the divided difference $f^{[2]}$ admits a certain Hilbert space factorization.
\end{abstract}

\section{Introduction}
Let $\mathcal{H}$ be a separable Hilbert space and let $\mathcal{S}^1(\mathcal{H})$, $\mathcal{S}^2(\mathcal{H})$, and $\mathcal{B}(\mathcal{H})$ denote respectively the trace class, the Hilbert-Schmidt class, and the space of bounded linear operators on $\mathcal{H}$. Let $A$ be a possibly unbounded selfadjoint operator densely defined in $\mathcal{H}$ and let $K$ be a selfadjoint operator in $\mathcal{S}^2(\mathcal{H})$. Let $f : \mathbb{R} \rightarrow \mathbb{C}$ be a Lipschitz function and let $\varphi$ denote the mapping given by
\begin{equation}\label{functionphi}
\varphi: \R\ni t\mapsto f(A+tK)-f(A).
\end{equation}
In this paper we address higher order continuous $\mathcal{S}^2(\mathcal{H})$-differentiability of $\varphi$ and its application in perturbation theory. The notion of differentiability we are dealing with is the classical definition of differentiability for Banach space valued functions defined on an open interval of $\mathbb{R}$.

Study of differentiability of operator functions was initiated in \cite{DK1956} and has been motivated by problems in perturbation theory. In \cite{DK1956}, existence of the $n$th order derivative $\varphi^{(n)}$ of the operator valued function given by \eqref{functionphi} was established in the operator norm for bounded $A$ and $f\in C^{2n}(\R)$.
Sharp sufficient conditions for existence of the first order derivative $\varphi'$ were established in \cite{Peller1990}
 and \cite{DeP-Suk,KPSS} in the operator norm and $\mathcal{S}^2(\mathcal{H})$-norm, respectively. In particular, the condition ``$f\in C^1(\R)$ and $f'$ is bounded'' is insufficient for existence of $\varphi'$ in the operator norm and sufficient for existence of $\varphi'$ in the $\mathcal{S}^2(\mathcal{H})$-norm. Existence of the $n$th order derivative $\varphi^{(n)}$  was established in the operator norm in \cite{Peller2006} for $f$ in the intersection of the Besov classes $B_{\infty1}^1(\R)\cap B_{\infty1}^n(\R)$ and in the symmetric operator ideal norm in \cite{ACDS} for $f$ in the Wiener class $W_{n+1}(\R)$. In \cite{ACDS,DK1956,DeP-Suk,KPSS,Peller1990,Peller2006}, the operator derivative $\varphi^{(n)}$ was represented as a certain multiple operator integral.

In our first main result (Theorem \ref{Formula}) we substantially enlarge the set of $f$ for which the higher order derivative $\varphi^{(n)}$ exists. We establish
that $\varphi$ is $n$-times continuously differentiable in $\mathcal{S}^2(\mathcal{H})$, provided $f\in C^n(\mathbb{R})$ and either $A$ is bounded or $f^{(i)}$, $i=1,\ldots,n$, are bounded. We derive the representation
\[\dfrac{1}{k!} \varphi^{(k)}(t) = \left[ \Gamma^{A+tK, A+tK, \ldots, A+tK}(f^{[k]}) \right] (K, \ldots, K),\quad k=1,\ldots,n,\] for derivatives $ \varphi^{(k)}$ via multiple operator integrals defined in \cite{CLS}, where $f^{[k]}$ is the $k$th order divided difference of $f$. The latter representation for $\varphi^{(k)}$ plays a key role in proving existence of $\varphi^{(k+1)}$ (induction process) and in the study of the operator Taylor remainder, which satisfies
\[f(A+K)-f(A)-\sum_{k=1}^{n-1}\dfrac{1}{k!}\varphi^{(k)}(0)=\left[\Gamma^{A+K,A,\ldots,A}(f^{[n]})\right](K,\ldots,K).\]
We note that such a formula already appeared either for special functions \cite[Theorem 5.1 $\&$ (5.1)]{AP} or in the finite
dimensional case \cite[Theorem 3.7]{PSST}.

One of the fundamental results in perturbation theory is the existence
of the Lifshits-Krein spectral shift function $\xi\in L^1(\R)$ uniquely
determined by self-adjoint operators $A$ and $K$ with $K\in \Sc^1(\H)$ and satisfying the trace formula
\begin{equation}\label{KreinsFormula}
\tr(f(A+K)-f(A))=\int_\mathbb R f'(t) \xi(t)\, dt
\end{equation}
for every $f\in C^1(\R)\cap L^1(\R)$ with $\widehat{f'}\in L^1(\mathbb R)$ \cite{K1,Lif}. We refer to \cite{Yafaev} for applications of the spectral shift function in mathematical physics and to \cite{ASsurvey} and references cited therein for generalizations of $\xi$. In \cite{Peller1990}, the trace formula \eqref{KreinsFormula} was extended to $f$ in the Besov class $B^1_{\infty 1}(\mathbb{R})$ and later in \cite{Peller2016}, to the class of differentiable functions $f$ with bounded $f'$ such that there exist a Hilbert space $H$ and bounded functions $\alpha,\beta:\R\to H$ (or a separable Hilbert space $H$ and bounded
weakly continuous functions $\alpha,\beta:\R\to H$) satisfying
\begin{equation}\label{HSF}
f^{[1]}(s,t)=\left<\alpha(s),\beta(t)\right>,\quad \forall (s,t) \in \mathbb{R}^2.
\end{equation}
According to \cite[Theorems 2.2.2, 2.2.3, 3.1.10, 3.3.6]{APeller}, the differentiability of $f$
and either of the above factorization properties
are equivalent to $f$ being $\S^1$-operator Lipschitz.

It was proved in \cite{Ko1} that there exists a function $\eta \in L^1(\mathbb{R})$ uniquely determined by $A$ and $K\in\Sc^2(\H)$, called the Koplienko spectral shift function associated to $(A,K)$, such that
\begin{equation}\label{Kop}
\tr\Big(f(A+K) - f(A) - \dfrac{d}{dt}f(A+tK)_{|t=0}\Big) = \int_{\mathbb{R}} f''(t) \eta(t) \, dt,
\end{equation}
for rational functions $f$ with nonpositive degree and poles off $\mathbb{R}$, where $\dfrac{d}{dt}f(A+tK)_{|t=0}$ is evaluated in the operator norm. In \cite{Peller2005}, the trace formula \eqref{Kop}
was extended to functions in the intersection of Besov classes $B^1_{\infty 1}(\mathbb{R})\cap B^2_{\infty 1}(\mathbb{R})$.
A modification of this formula also holds for $f\in B^2_{\infty 1}(\mathbb{R})$ (see Remark \ref{PellerMS} for the precise formulation).

In our second main result (Theorem \ref{ThmA}), we prove that
\begin{equation}
\label{generalTF}
\tr\left(\left[\Gamma^{A+K,A,A}(f^{[2]})\right](K,K)\right) = \int_{\mathbb{R}} f''(t) \eta(t) \, dt,
\end{equation}
for every $f\in C^2(\R)$ with bounded $f''$ and such that there exist a separable Hilbert space $H$ and two bounded Borel functions $a, b : \mathbb{R}^2 \rightarrow H$ satisfying
\begin{equation}\label{Hypfintro}
f^{[2]}(s,t,u) = \left\langle a(s,t), b(t,u) \right\rangle,\quad \forall (s,t,u) \in \mathbb{R}^3.
\end{equation}
If, in addition, either $f'$ is bounded or $A$ is bounded,  we prove that \eqref{Kop} holds
with the derivative $\dfrac{d}{dt}f(A+tK)_{|t=0}$ evaluated in the $\mathcal{S}^2(\H)$-norm.
The factorization property (\ref{Hypfintro}) can be regarded as a natural analogue
of (\ref{HSF}) for the second order divided difference.
We show in Proposition \ref{Besovfacto} that any $f\in B^2_{\infty 1}(\mathbb{R})$ satisfies \eqref{Hypfintro}
and in Proposition \ref{Haagerup} that $f\in C^2(\R)$ with $f^{[2]}$ in the dual space $(\ell^1_{\mathbb{R}}
\overset{h}{\otimes} \ell^1_{\mathbb{R}} \overset{h}{\otimes} \ell^1_{\mathbb{R}})^*,$ where $\overset{h}{\otimes}$ is the Haagerup tensor product, also satisfies \eqref{Hypfintro}.

The key technical tool in derivation of our main results is a multiple operator integral $\Gamma^{A_1,A_2, \ldots, A_n}(\phi)$ defined on the Cartesian product $\Sc^2(\H)\times\ldots\times\Sc^2(\H)$ for every bounded measurable symbol $\phi$. New algebraic and analytic properties of $\Gamma^{A_1,A_2, \ldots, A_n}(\phi)$ essential for the main results are obtained in Sections $\ref{MOI}$ and $\ref{Sectionapprox}$. To prove existence and continuity of higher order derivatives $\varphi^{(n)}$ in the $\Sc^2(\H)$-norm in Section $\ref{Perturbation}$, we take an approach of approximating symbols of multiple operator integrals and applying $w^*$-continuity of $\Gamma^{A_1,A_2, \ldots, A_n} : L^{\infty}\left(\prod_{i=1}^n \lambda_{A_i}\right) \longrightarrow
B_{n-1}(\mathcal{S}^2(\mathcal{H}) \times \mathcal{S}^2(\mathcal{H})\times \cdots \times \mathcal{S}^2(\mathcal{H}), \mathcal{S}^2(\mathcal{H}))$ stated in Proposition \ref{nintegrals} and the subsequent paragraph, where $\lambda_{A_i}$ is a scalar-valued spectral measure of $A_i$. This is different from the approaches of \cite{DeP-Suk,KPSS} resting on approximation of operators $A_1,A_2,\ldots,A_n$ used to prove existence of $\varphi'$  and from the approaches \cite{ACDS,Peller2006}
used to prove existence of $\varphi^{(n)}$ for smaller sets of functions $f$. To establish the trace formula \eqref{generalTF} and its particular case \eqref{Kop} in Section $\ref{traceform}$, we need $\varphi''$ to attain its values in $\Sc^1(\H)$ and the function $\tr(\varphi''(t))$ to be continuous on $[0,1]$. This imposes the Hilbert space factorization restriction \eqref{Hypfintro} on the functions $f$ for which we prove \eqref{generalTF}.

We use the following notations. For any $m\in\N$, we let $C(\mathbb{R}^m)$ be the vector space of all continuous functions from $\mathbb{R}^m$ into $\mathbb{C}$, we let $C_b(\mathbb{R}^m)$ be the subspace of all bounded continuous functions, and we let $C_0(\mathbb{R}^m)$ be the subspace of all continuous functions vanishing at infinity. Further, for any $p\in\N$ we let $C^p(\mathbb{R}^m)$ be the vector space of all $p$-times differentiable functions from $\mathbb{R}^m$ into $\mathbb{C}$. For $\mathcal{A}=\mathbb{R}$ or $\mathbb{C}$, we denote by $\text{Bor}(\mathcal{A})$ the space of bounded Borel functions from $\mathcal{A}$ into $\mathbb{C}$.
We denote the space of $p$-times differentiable functions $\varphi : \mathbb{R} \rightarrow \mathcal{S}^2(\mathcal{H})$
with continuous $p$th derivative $\varphi^{(p)} : \mathbb{R} \rightarrow \mathcal{S}^2(\mathcal{H})$ by $C^p(\mathbb{R},\Sc^2(\H))$.

\section{Multiple operator integrals}\label{MOI}

In this section, we recall a multiple operator integral introduced in \cite{Pav} and developed in \cite{CLS} and derive its key algebraic properties that underline our main results. We note that there are several other different constructions of multiple operator integrals \cite{ACDS,BS1,DK1956,Peller2006,PSS-SSF}, but they do not suit the generality of this paper because they are applicable to smaller sets of symbols. Comparison of different approaches to multiple operator integrals will be done in a special publication dedicated to the history of the subject.

Let $A$ be a (possibly unbounded) normal operator densely defined in $\mathcal H$ with spectrum $\sigma(A)$. Let $\lambda_A$ be a scalar-valued spectral measure of $A$. It means that $\lambda_A$ is a positive finite measure on the Borel subsets of $\sigma(A)$ such that $\lambda_A$ and $E^A$, the spectral measure of $A$, have the same sets of measure zero. We refer to \cite[Section 15]{Conway} and \cite[Section 2.1]{CLS} for the existence and the construction of such measure.

The Borel functional calculus for $A$ takes any bounded Borel
function $f : \sigma(A) \rightarrow \mathbb{C}$ to the bounded operator
$$
f(A):=\int_{\sigma(A)} f(t) \ \text{d}E^A(t)\,.
$$
This operator only depends on the class of $f$ in $L^{\infty}(\lambda_A)$.

Let $n\in\N, n\geq 2$ and let $A_1, A_2, \ldots, A_n$ be normal operators densely defined in $\mathcal{H}$ with scalar-valued spectral measures $\lambda_{A_1}, \ldots, \lambda_{A_n}$. We let $B_{n-1}(\mathcal{S}^2(\mathcal{H}) \times \mathcal{S}^2(\mathcal{H})
\times \cdots \times \mathcal{S}^2(\mathcal{H}), \mathcal{S}^2(\mathcal{H}))$ be the space of bounded $(n-1)$-linear mappings defined on the product of $n-1$ copies of $\mathcal{S}^2(\mathcal{H})$ valued in $\mathcal{S}^2(\mathcal{H})$. This is a dual space, and a predual is given by
$$\mathcal{S}^2(\mathcal{H}) \overset{\wedge}{\otimes} \cdots \overset{\wedge}{\otimes} \mathcal{S}^2(\mathcal{H}),$$
the projective tensor product of $n$ copies of $\mathcal{S}^2(\mathcal{H})$. See \cite[Section 3.1]{CLS} for more information in the case $n=3$.

Define a linear mapping
\begin{equation*}
\Gamma^{A_1,A_2, \ldots, A_n} : L^{\infty}(\lambda_{A_1}) \otimes \cdots \otimes L^{\infty}(\lambda_{A_n}) \rightarrow B_{n-1}(\mathcal{S}^2(\mathcal{H}) \times \mathcal{S}^2(\mathcal{H})
\times \cdots \times \mathcal{S}^2(\mathcal{H}), \mathcal{S}^2(\mathcal{H}))
\end{equation*}
such that for any $f_i \in L^{\infty}(\lambda_{A_i}), i=1, \ldots, n$ and for any $X_1, \ldots, X_{n-1} \in \mathcal{S}^2(\mathcal{H})$,
\begin{align}\label{MOItensor}
\left[\Gamma^{A_1,A_2, \ldots, A_n}(f_1\otimes\cdots\otimes f_n)\right]
& (X_1,\ldots, X_{n-1})\\ \nonumber
& =f_1(A_1)X_1f_2(A_2) \cdots f_{n-1}(A_{n-1})X_{n-1}f_n(A_n).
\end{align}

The following is proved in \cite[Theorem 5 and Proposition 6]{CLS}.

\begin{proposition}\label{nintegrals}
$\Gamma^{A_1,A_2, \ldots, A_n}$ extends to a unique $w^*$-continuous contraction
$$
\Gamma^{A_1,A_2, \ldots, A_n} : L^{\infty}\left(\prod_{i=1}^n
\lambda_{A_i}\right) \longrightarrow
B_{n-1}(\mathcal{S}^2(\mathcal{H}) \times \mathcal{S}^2(\mathcal{H})
\times \cdots \times \mathcal{S}^2(\mathcal{H}), \mathcal{S}^2(\mathcal{H})).
$$
\end{proposition}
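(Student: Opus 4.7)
The plan is to build $\Gamma^{A_1,\ldots,A_n}$ as the adjoint of a contractive linear map on the predual side. Since the target space $B_{n-1}(\Sc^2(\H)^{n-1},\Sc^2(\H))$ is, by the discussion preceding the statement, the dual of $\Sc^2(\H) \overset{\wedge}{\otimes} \cdots \overset{\wedge}{\otimes} \Sc^2(\H)$ via the trace duality $\langle T, X_1 \otimes \cdots \otimes X_n\rangle := \tr\bigl(T(X_1,\ldots,X_{n-1}) X_n\bigr)$, and $L^\infty(\prod_i \lambda_{A_i})$ is the dual of $L^1(\prod_i \lambda_{A_i})$, it suffices to produce a contraction
\[
\Psi : \Sc^2(\H) \overset{\wedge}{\otimes} \cdots \overset{\wedge}{\otimes} \Sc^2(\H) \longrightarrow L^1\Bigl(\prod_{i=1}^n \lambda_{A_i}\Bigr)
\]
and put $\Gamma^{A_1,\ldots,A_n} := \Psi^*$; this automatically yields a $w^*$-continuous contraction. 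Uniqueness of the $w^*$-continuous extension then follows from the $w^*$-density of the algebraic tensor product $L^\infty(\lambda_{A_1}) \otimes \cdots \otimes L^\infty(\lambda_{A_n})$ in $L^\infty(\prod_i \lambda_{A_i})$, which itself is a standard consequence of the $\sigma$-finiteness of the $\lambda_{A_i}$.

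To construct $\Psi$ on an elementary tensor $X_1 \otimes \cdots \otimes X_n$, I would first establish that the set function
\[
\alpha_1 \times \cdots \times \alpha_n \longmapsto \tr\bigl(E^{A_1}(\alpha_1) X_1 E^{A_2}(\alpha_2) X_2 \cdots E^{A_n}(\alpha_n) X_n\bigr)
\]
extends to a complex measure on the product Borel $\sigma$-algebra of $\prod_i \sigma(A_i)$ which is absolutely continuous with respect to $\lambda_{A_1} \otimes \cdots \otimes \lambda_{A_n}$ (because each $\lambda_{A_i}$ controls the null sets of $E^{A_i}$) and has total variation bounded by $\|X_1\|_2 \cdots \|X_n\|_2$. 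The total variation estimate should come from iterated applications of $|\tr(XY)| \leq \|X\|_2 \|Y\|_2$ together with $\|E^{A_i}(\alpha) X\|_2 \leq \|X\|_2$. The Radon--Nikodym derivative then defines $\Psi(X_1 \otimes \cdots \otimes X_n) \in L^1(\prod_i \lambda_{A_i})$ with norm at most $\|X_1\|_2 \cdots \|X_n\|_2$, and the universal property of the projective tensor product extends $\Psi$ contractively to the full predual. By construction, for any $f_i \in L^\infty(\lambda_{A_i})$,
\[
\int (f_1 \otimes \cdots \otimes f_n) \, \Psi(X_1 \otimes \cdots \otimes X_n) \, d\Bigl(\bigotimes_{i=1}^n \lambda_{A_i}\Bigr) = \tr\bigl(f_1(A_1) X_1 \cdots f_n(A_n) X_n\bigr),
\]
so taking adjoints in the trace duality above identifies $\Psi^*(f_1 \otimes \cdots \otimes f_n)$ with the $(n-1)$-linear map (\ref{MOItensor}).

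The main obstacle I foresee is the construction of the complex measure above and the proof of its total variation bound. The case $n=2$ is the classical Birman--Solomyak estimate for double operator integrals; for $n \geq 3$ I expect to proceed inductively, absorbing an adjacent product $X_j E^{A_{j+1}}(\alpha_{j+1}) X_{j+1}$ into a single $\Sc^1(\H)$ factor via H\"older and cyclic invariance of the trace, thus reducing to the double operator integral bound on the outer factor. Once this measure-theoretic step is settled, the remaining pieces -- the trace duality identification of the target space, the adjoint passage to deliver $w^*$-continuity and contractivity, the verification of (\ref{MOItensor}) on simple tensors, and the uniqueness via $w^*$-density -- are purely formal.
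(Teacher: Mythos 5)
The paper itself does not prove Proposition \ref{nintegrals}; it imports it from \cite{CLS} (Theorem 5 and Proposition 6), and the construction there is the same dual--pair construction you propose: realize $B_{n-1}(\Sc^2(\H)\times\cdots\times\Sc^2(\H),\Sc^2(\H))$ as the dual of $\Sc^2(\H)\overset{\wedge}{\otimes}\cdots\overset{\wedge}{\otimes}\Sc^2(\H)$ via trace duality, build a contraction $\Psi$ into $L^1\bigl(\prod_i\lambda_{A_i}\bigr)$ on the predual side, and take adjoints. The formal steps you list (automatic $w^*$-continuity and contractivity of an adjoint, verification of \eqref{MOItensor} on elementary tensors, uniqueness via $w^*$-density of $L^\infty(\lambda_{A_1})\otimes\cdots\otimes L^\infty(\lambda_{A_n})$) are indeed routine. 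One caution on the variation bound: iterating $|\tr(XY)|\le\|X\|_2\|Y\|_2$ only bounds $|m(\alpha_1\times\cdots\times\alpha_n)|$ for a single rectangle; the total variation requires summing over a grid partition, which needs a Cauchy--Schwarz in the partition indices. Your inductive sketch does supply this: for fixed $k$ the $n=2$ Birman--Solomyak bound gives $\sum_{i,j}|\tr(E^{A_1}(\alpha_1^i)X_1E^{A_2}(\alpha_2^j)Y_k)|\le\|X_1\|_2\|Y_k\|_2$ with $Y_k=X_2E^{A_3}(\alpha_3^k)X_3\cdots$, and then $\sum_k\|Y_k\|_2$ is controlled by Cauchy--Schwarz over $k$. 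So that step is workable as planned.

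The genuine gap is the absolute continuity of $m$ with respect to $\lambda_{A_1}\times\cdots\times\lambda_{A_n}$, which you justify only by ``each $\lambda_{A_i}$ controls the null sets of $E^{A_i}$''. That observation gives (via a $\pi$--$\lambda$ argument) that $|m|$ vanishes on every cylinder $\sigma(A_1)\times\cdots\times N_i\times\cdots\times\sigma(A_n)$ with $\lambda_{A_i}(N_i)=0$, i.e.\ marginal absolute continuity. But marginal absolute continuity of a measure on a product space does not imply absolute continuity with respect to the product measure: arc length on the diagonal of $[0,1]^2$ has Lebesgue-absolutely-continuous marginals yet is singular with respect to planar Lebesgue measure. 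Since landing in $L^1\bigl(\prod_i\lambda_{A_i}\bigr)$ --- rather than merely in the space of complex measures on $\prod_i\sigma(A_i)$ --- is precisely what makes $\Psi^*$ a map defined on $L^\infty\bigl(\prod_i\lambda_{A_i}\bigr)$, this point is the crux and cannot be dispatched in one line. A correct argument is to represent each $A_i$ as multiplication by a real function $a_i$ on some $L^2(\Omega_i,\mu_i)$ with $(a_i)_*\mu_i$ equivalent to $\lambda_{A_i}$; then $X_i$ becomes an integral operator with kernel $k_i\in L^2(\Omega_i\times\Omega_{i+1})$ (indices mod $n$), one computes that $m$ is the image under $(a_1,\ldots,a_n)$ of the measure $k_1(\omega_1,\omega_2)\cdots k_n(\omega_n,\omega_1)\,d\mu_1\cdots d\mu_n$, whose density is integrable by the same Cauchy--Schwarz as above, and absolute continuity follows because the pushforward of $\prod_i\mu_i$ is $\prod_i(a_i)_*\mu_i\sim\prod_i\lambda_{A_i}$. (Alternatively, one can induct using the projection-valued measure $U\mapsto\Gamma^{A_i,A_{i+1}}(\chi_U)$ on $\Sc^2(\H)$.) With this step supplied, your proof goes through.
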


The $w^*$-continuity of $\Gamma^{A_1,A_2, \ldots, A_n}$ means that if a net $(\phi_i)_{i\in I}$ in $L^{\infty}\left(\prod_{i=1}^n
\lambda_{A_i}\right)$ converges to $\phi \in L^{\infty}\left(\prod_{i=1}^n \lambda_{A_i}\right)$ in the $w^*$-topology, then for any $X_1, \ldots, X_{n-1} \in \mathcal{S}^2(\mathcal{H})$, the net
$$
\bigl(\left[\Gamma^{A_1,A_2, \ldots, A_n}(\phi_i)\right](X_1,\ldots, X_{n-1})\bigr)_{i\in I}
$$
converges to $\left[\Gamma^{A_1,A_2, \ldots, A_n}(\phi)\right](X_1,\ldots, X_{n-1})$ weakly in $\mathcal{S}^2(\mathcal{H})$.

\begin{definition}
For $\phi \in L^{\infty}\left(\prod_{i=1}^n\lambda_{A_i}\right)$, the transformation $\Gamma^{A_1,A_2, \ldots, A_n}(\phi)$ defined in \eqref{MOItensor} and extended in Proposition \ref{nintegrals} is called a multiple operator integral associated to $A_1, A_2, \ldots, A_n$ and $\phi$.
\end{definition}

If $n=2$, then the transformation $\Gamma^{A_1,A_2}(\phi)$ coincides with the double operator integral defined in \cite{BS1,BS2,BS3}.

In the case $n=3$, the description of the elements $\phi$ such that
$\Gamma^{A_1,A_2, A_3}(\phi)$ maps $\mathcal{S}^2(\mathcal{H}) \times \mathcal{S}^2(\mathcal{H})$ into $\mathcal{S}^1(\mathcal{H})$ was settled in \cite[Theorem 23]{CLS}. We recall this result for a future use.

\begin{theorem}\label{mainS1}
Let $A,B$ and $C$ be normal operators densely defined in $\mathcal{H}$
and let $\phi\in L^{\infty}(\lambda_A \times
\lambda_B \times \lambda_C)$. The following are equivalent:
\begin{enumerate}
\item[(i)] $\Gamma^{A,B,C}(\phi) \in B_2(\mathcal{S}^2(\mathcal{H})
\times \mathcal{S}^2(\mathcal{H}), \mathcal{S}^1(\mathcal{H})).$
\item[(ii)] There exist a separable Hilbert space $H$ and two functions
$$
a\in L^{\infty}(\lambda_A \times \lambda_B ; H) \qquad
\text{and} \qquad
b\in L^{\infty}(\lambda_B\times \lambda_C ; H)
$$
such that
$$
\phi(t_1,t_2,t_3)= \left\langle a(t_1,t_2),b(t_2,t_3) \right\rangle
$$
for a.e. $(t_1,t_2,t_3) \in \sigma(A) \times \sigma(B) \times \sigma(C).$
\end{enumerate}
In this case,
\begin{equation}\label{AllEqual}
\|\Gamma^{A,B,C}(\phi)\colon \mathcal{S}^2(\H)\times \mathcal{S}^2(\H)\longrightarrow \mathcal{S}^1(\H) \|=\inf
\bigl\{\|a\|_\infty \|b \|_\infty\bigr\},
\end{equation}
where the infimum runs over all pairs $(a,b)$ satisfying (ii).
\end{theorem}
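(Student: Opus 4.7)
The plan is to treat the two directions separately. Direction (ii)$\Rightarrow$(i) is a direct construction from the Hilbert-space factorization, while direction (i)$\Rightarrow$(ii) requires extracting such a factorization from boundedness into $\mathcal{S}^1$ and is where the main technical work lies.

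For (ii)$\Rightarrow$(i), I would fix an orthonormal basis $(e_k)_{k\geq 1}$ of $H$ and expand the inner product in coordinates to write $\phi(t_1,t_2,t_3)=\sum_k\alpha_k(t_1,t_2)\beta_k(t_2,t_3)$ for suitable scalar-valued functions satisfying $\sum_k|\alpha_k|^2\leq\|a\|_\infty^2$ and $\sum_k|\beta_k|^2\leq\|b\|_\infty^2$ almost everywhere. On an elementary symbol of the form $\alpha(t_1,t_2)\beta(t_2,t_3)$, formula \eqref{MOItensor} immediately factors the triple integral as a composition of two double operator integrals. Truncating to finite partial sums in $k$ and passing to the limit using the $w^*$-continuity in Proposition \ref{nintegrals}, one obtains
\[
\bigl[\Gamma^{A,B,C}(\phi)\bigr](X,Y)=\sum_{k}\bigl[\Gamma^{A,B}(\alpha_k)\bigr](X)\cdot\bigl[\Gamma^{B,C}(\beta_k)\bigr](Y).
\]
Then $\|UV\|_1\leq\|U\|_2\|V\|_2$ applied termwise, Cauchy--Schwarz in $k$, and the standard vector-valued double operator integral estimate give the bound $\|[\Gamma^{A,B,C}(\phi)](X,Y)\|_1\leq\|a\|_\infty\|b\|_\infty\|X\|_2\|Y\|_2$, which yields the $\leq$ part of \eqref{AllEqual}.

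For (i)$\Rightarrow$(ii), trace duality between $\mathcal{S}^1$ and $\mathcal{B}(\mathcal{H})$ lets me reformulate the hypothesis as boundedness of the trilinear form
\[
\Lambda(X,Y,Z):=\tr\bigl(Z\,[\Gamma^{A,B,C}(\phi)](X,Y)\bigr),\qquad (X,Y,Z)\in\mathcal{S}^2\times\mathcal{S}^2\times\mathcal{B}(\mathcal{H}).
\]
On elementary symbols $\phi=f_1\otimes f_2\otimes f_3$, $\Lambda$ evaluates to $\tr(Zf_1(A)Xf_2(B)Yf_3(C))$, exhibiting the ``$B$-sandwich'' structure controlled by the Haagerup tensor norm on the spectral algebras. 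My plan is to reduce to the case where $A,B,C$ have pure point spectra supported on finitely many atoms via spectral truncation, so that the statement becomes the following: a tensor $(\phi_{ijk})$ defines a bilinear map $(X,Y)\mapsto(\sum_j\phi_{ijk}X_{ij}Y_{jk})_{i,k}$ from $\mathcal{S}^2\times\mathcal{S}^2$ into $\mathcal{S}^1$ (of the relevant matrix algebras) with norm $\leq M$ if and only if there is a Hilbert-space factorization $\phi_{ijk}=\langle u_{ij},v_{jk}\rangle$ with $\sup_{ij}\|u_{ij}\|\cdot\sup_{jk}\|v_{jk}\|\leq M$. This finite-dimensional statement is the classical Haagerup/Smith characterization and can then be pulled back through the Borel functional calculus to produce candidate vectors $a,b$ at each level of truncation.

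The main obstacle is the limiting step. The approximating factorizations $\phi_\alpha=\langle a_\alpha,b_\alpha\rangle$ live in Hilbert spaces of unbounded dimension but with $\|a_\alpha\|_\infty\|b_\alpha\|_\infty\leq \|\Gamma^{A,B,C}(\phi)\|$ uniformly. I would absorb all approximating Hilbert spaces into a single separable $H$, using separability of the scalar spectral measures $\lambda_{A_i}$ to control countable exhaustions, and then extract weak-$\ast$ cluster points in the closed balls of $L^\infty(\lambda_A\times\lambda_B;H)$ and $L^\infty(\lambda_B\times\lambda_C;H)$. Identifying $\phi=\langle a,b\rangle$ in the limit is delicate because the inner product is a bilinear expression in $a$ and $b$ and is not separately $w^*$-continuous; I would handle this by testing against finite-rank $X,Y$ arising from joint spectral projections and using the $w^*$-continuity of $\Gamma^{A,B,C}$ to match bilinear pairings on a dense family. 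Weak-$\ast$ lower semicontinuity of the $L^\infty$ norms then transfers the uniform bound, giving the reverse inequality and therefore the equality in \eqref{AllEqual}.
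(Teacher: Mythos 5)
First, a remark on context: the paper does not prove this statement at all; it is quoted verbatim from \cite[Theorem 23]{CLS}, so there is no internal proof to compare against. Judged on its own merits, your direction (ii)$\Rightarrow$(i) is correct and is essentially the argument recorded in Proposition \ref{new} of this paper: expanding in a basis of $H$, factoring $\Gamma^{A,B,C}(\alpha_k\beta_k)$ as a product of two double operator integrals, and using that $\Gamma^{A,B}$ is a $*$-homomorphism on $L^\infty(\lambda_A\times\lambda_B)$ (Birman--Solomyak) to get $\sum_k\|\Gamma^{A,B}(\alpha_k)(X)\|_2^2\le\|a\|_\infty^2\|X\|_2^2$, whence the bound $\|a\|_\infty\|b\|_\infty\|X\|_2\|Y\|_2$ after Cauchy--Schwarz. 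That part, and the $\le$ half of \eqref{AllEqual}, I accept.

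The hard direction (i)$\Rightarrow$(ii) has a genuine gap precisely at the step you flag but do not resolve. Extracting weak-$*$ cluster points of $a_\alpha$ in the ball of $L^\infty(\lambda_A\times\lambda_B;H)$ and of $b_\alpha$ in the ball of $L^\infty(\lambda_B\times\lambda_C;H)$ separately cannot yield $\phi=\langle a,b\rangle$: the pointwise pairing $(a,b)\mapsto\langle a(\cdot),b(\cdot)\rangle$ is not even jointly $w^*$-continuous. A minimal counterexample: take $a_\alpha=b_\alpha\equiv\epsilon_\alpha$ (the $\alpha$-th vector of an orthonormal basis of $H$, as constant functions); then $a_\alpha\to0$ and $b_\alpha\to0$ weak-$*$ while $\langle a_\alpha,b_\alpha\rangle\equiv1$. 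So the cluster points you extract may simply be the wrong objects, and ``testing against finite-rank $X,Y$'' only re-verifies the convergence $\phi_\alpha\to\phi$ that you already have; it gives no information relating $\phi$ to the limits $a,b$. The known proofs get around this by a different device: one passes to the limit not on $a_\alpha,b_\alpha$ themselves but on the scalar-valued positive-semidefinite correlation kernels $\langle a_\alpha(x),a_\alpha(x')\rangle$, $\langle a_\alpha(x),b_\alpha(y)\rangle$, $\langle b_\alpha(y),b_\alpha(y')\rangle$ (which do live in $w^*$-compact balls of scalar $L^\infty$ spaces), and then reconstructs $a$ and $b$ in a single separable $H$ by a GNS-type construction from the limiting positive-definite kernel; alternatively one realizes the space of bilinear multipliers $\mathcal{S}^2\times\mathcal{S}^2\to\mathcal{S}^1$ as the dual of a normal Haagerup tensor product of $L^1$-spaces, so the factorization is produced by duality rather than by a limit of factorizations. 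Either route also settles the measurability of $a,b$ and the separability of $H$, which your sketch leaves open. A secondary, smaller issue: the finite-dimensional characterization of bilinear Schur multipliers $\mathcal{S}^2_n\times\mathcal{S}^2_n\to\mathcal{S}^1_n$ via $\phi_{ijk}=\langle u_{ij},v_{jk}\rangle$ is itself one of the main results of \cite{CLS} (obtained from the Christensen--Sinclair/Smith theory after a nontrivial identification), not an off-the-shelf classical fact, so it deserves at least a proof sketch rather than a bare citation to ``Haagerup/Smith''.
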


In the sequel, we will work with densely defined selfadjoint operators $A_1, A_2, \ldots, A_n, n\in \mathbb{N}, n\geq 2$. If $\psi : \mathbb{R}^n \rightarrow \mathbb{C}$ is a bounded Borel function, let $\tilde{\psi}$ be the class of the restriction $\psi_{|\sigma(A_1) \times \sigma(A_2) \times \cdots \times \sigma(A_n)}$ in $L^{\infty}\left(\prod_{i=1}^n
\lambda_{A_i}\right)$. Then, we will denote by $\Gamma^{A_1,A_2, \ldots, A_n}(\psi)$ the multiple operator integral $\Gamma^{A_1,A_2, \ldots, A_n}(\tilde{\psi})$.

We deduce from Theorem \ref{mainS1} that if $f : \mathbb{R} \rightarrow \mathbb{C}$ is a $C^2$-function with bounded $f''$ such that $f^{[2]}$ satisfies the property \eqref{Hypfintro}, then for any selfadjoint operators $A,B,C$ on a separable Hilbert space $\mathcal{H}$, the class of $f^{[2]}_{|\sigma(A) \times \sigma(B) \times \sigma(C)}$ belongs to $L^{\infty}(\lambda_A \times
\lambda_B \times \lambda_C)$ and the triple operator integral $\Gamma^{A,B,C}(f^{[2]})$ maps $\mathcal{S}^2(\mathcal{H}) \times \mathcal{S}^2(\mathcal{H})$ into $\mathcal{S}^1(\mathcal{H})$. Hence, for any $X,Y \in \mathcal{S}^2(\mathcal{H})$, the trace
$$
\tr\left(\left[\Gamma^{A,B,C}(f^{[2]})\right](X,Y)\right)
$$
is a well defined element of $\mathbb{C}$.

For the rest of this section, we fix a bounded Borel function $\phi : \mathbb{R}^3 \rightarrow \mathbb{C}$ satisfying the factorization property \eqref{Hypfintro}, that is, we assume that there exist a separable Hilbert space $H$ and two bounded Borel functions $a, b : \mathbb{R}^2 \rightarrow H$ such that
\begin{equation}
\label{star}
\phi(s,t,u) = \left\langle a(s,t), b(t,u) \right\rangle,\quad \forall (s,t,u) \in \mathbb{R}^3.
\end{equation}
Let $(\epsilon_n)_n$ be a Hilbertian basis of $H$. For every $n\in\N$, let $a_n, b_n : \mathbb{R}^2 \rightarrow \mathbb{C}$ be the bounded Borel functions defined by
\begin{equation}\label{anbn}
a_n = \left\langle a, \epsilon_n \right\rangle \ \ \text{and} \ \ b_n = \left\langle \epsilon_n, b  \right\rangle.
\end{equation}
Thus,
\[\phi(s,t,u)=\sum_{n=1}^\infty a_n(s,t)\,b_n(s,t).\]

The following decomposition is a consequence of the proof of \cite[Theorem 23]{CLS}.

\begin{proposition}\label{new}
Let $\mathcal{H}$ be a separable Hilbert space and let $A_1,A_2,A_3$ be selfadjoint operators densely defined in $\mathcal{H}$.
Then,  for $\phi$ satisfying \eqref{star},
\begin{equation}
\label{CVseries}
\left[\Gamma^{A_1,A_2,A_3}(\phi)\right](X,Y) = \sum_{n=1}^{\infty} \Gamma^{A_1,A_2}(a_n)(X) \Gamma^{A_2,A_3}(b_n)(Y)
\end{equation}
for any $X,Y \in \mathcal{S}^2(\mathcal{H})$, where the series is absolutely convergent in $\mathcal{S}^1(\mathcal{H})$ and $a_n,b_n$ are defined in \eqref{anbn}. Moreover,
\begin{equation}\label{normineqS1}
\| \left[\Gamma^{A_1,A_2,A_3}(\phi)\right](X,Y) \|_1 \leq \|a\|_{\infty} \|b\|_{\infty} \|X\|_2 \|Y\|_2,
\end{equation}
where
$$\|a\|_{\infty} = \underset{(s,t)\in \mathbb{R}^2}{\sup}\ \|a(s,t)\|_H \qquad\hbox{and}\qquad
\|b\|_{\infty} = \underset{(t,u)\in \mathbb{R}^2}{\sup} \ \|b(t,u)\|_H.$$
\end{proposition}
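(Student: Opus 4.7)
The approach is threefold: (i) establish a product decomposition for triple operator integrals whose symbol factors multiplicatively; (ii) apply this to partial sums of the series representing $\phi$ to get a finite-sum identity; (iii) pass to the limit by combining weak $\mathcal{S}^2$-convergence on one side with absolute $\mathcal{S}^1$-convergence on the other.

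Step (i): I claim that for any bounded Borel functions $a,b:\mathbb{R}^2\to\mathbb{C}$ and any $X,Y\in\mathcal{S}^2(\mathcal{H})$, setting $\psi(s,t,u):=a(s,t)b(t,u)$,
\[
\bigl[\Gamma^{A_1,A_2,A_3}(\psi)\bigr](X,Y) = \Gamma^{A_1,A_2}(a)(X)\,\Gamma^{A_2,A_3}(b)(Y).
\]
For elementary tensors $a=f_1\otimes f_2$ and $b=g_2\otimes g_3$, both sides equal $f_1(A_1)X(f_2g_2)(A_2)Yg_3(A_3)$ directly from \eqref{MOItensor}. The identity extends by a two-step $w^*$-limit argument: for fixed elementary $b$, both sides depend $w^*$-to-weakly continuously on $a\in L^\infty(\lambda_{A_1}\times\lambda_{A_2})$ into $\mathcal{S}^2(\mathcal{H})$ (by Proposition \ref{nintegrals} on the left; by Proposition \ref{nintegrals} composed with weak continuity of right multiplication by the fixed Hilbert-Schmidt operator $\Gamma^{A_2,A_3}(b)(Y)$ on the right), so the identity extends to all bounded Borel $a$. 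An analogous step for $b$ completes the proof of (i).

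Step (ii): By bilinearity, applying (i) to the partial sums $\psi_N(s,t,u):=\sum_{n=1}^N a_n(s,t)b_n(t,u)$ yields
\[
\bigl[\Gamma^{A_1,A_2,A_3}(\psi_N)\bigr](X,Y) = \sum_{n=1}^N \Gamma^{A_1,A_2}(a_n)(X)\,\Gamma^{A_2,A_3}(b_n)(Y).
\]
Cauchy--Schwarz in $H$ gives $|\psi_N|\leq\|a\|_\infty\|b\|_\infty$, and $\psi_N\to\phi$ pointwise, so dominated convergence yields $\psi_N\to\phi$ in the $w^*$-topology of $L^\infty\bigl(\prod_{i=1}^3\lambda_{A_i}\bigr)$; by Proposition \ref{nintegrals}, the left-hand side converges weakly in $\mathcal{S}^2(\mathcal{H})$ to $[\Gamma^{A_1,A_2,A_3}(\phi)](X,Y)$.

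The main technical obstacle is (iii): showing the series on the right converges absolutely in $\mathcal{S}^1(\mathcal{H})$, with norm bounded by $\|a\|_\infty\|b\|_\infty\|X\|_2\|Y\|_2$. By H\"older's inequality $\|ST\|_1\leq\|S\|_2\|T\|_2$ and Cauchy--Schwarz in $\ell^2$, this reduces to the vector-valued estimate
\[
\sum_{n\geq 1}\|\Gamma^{A_1,A_2}(a_n)(X)\|_2^{\,2} \leq \|a\|_\infty^{\,2}\,\|X\|_2^{\,2},
\]
together with its analogue for $b$. This estimate is essentially the content of the proof of \cite[Theorem 23]{CLS}: in the special case where $A_1,A_2$ have pure point spectrum, $\Gamma^{A_1,A_2}(a_n)(X)$ has matrix entries $a_n(\alpha_i,\beta_j)X_{ij}$ in spectral bases, and the bound follows at once from the pointwise Parseval identity $\sum_n|a_n(s,t)|^2=\|a(s,t)\|_H^2\leq\|a\|_\infty^2$ combined with $\sum_{i,j}|X_{ij}|^2=\|X\|_2^2$; the general case follows by approximation using the spectral theorem. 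Once absolute $\mathcal{S}^1$-convergence is in hand, the $\mathcal{S}^1$-limit of the partial sums must coincide with the weak $\mathcal{S}^2$-limit from (ii), yielding \eqref{CVseries}, and the Cauchy--Schwarz bound above delivers \eqref{normineqS1}.
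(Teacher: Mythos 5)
Your proof is correct and follows essentially the same route as the paper, which simply defers to the proof of \cite[Theorem 23]{CLS}: expand $a$ and $b$ in a Hilbertian basis, prove the multiplicative decomposition on elementary tensors and extend by $w^*$-continuity (this is the paper's Lemma \ref{separation} technique), establish the square-function bound $\sum_n\|\Gamma^{A_1,A_2}(a_n)(X)\|_2^2\leq\|a\|_\infty^2\|X\|_2^2$ via the Birman--Solomyak spectral measure, and identify the absolute $\mathcal{S}^1$-limit with the weak $\mathcal{S}^2$-limit of the partial sums. No gaps.
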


The following representation for the trace of a triple operator integral is a crucial tool in derivation of our second main result.

\begin{proposition}\label{Simplf}
For $\phi$ satisfying \eqref{star}, let $\psi : \mathbb{R}^2 \rightarrow \mathbb{C}$ be defined by $$\psi(s,t)=\phi(s,t,s).$$ Let $A$ and $B$ be selfadjoint operators densely defined in $\mathcal{H}$. Then, for any $X,Y \in \mathcal{S}^2(\mathcal{H})$,
\begin{align*}
\tr\left(\big[\Gamma^{A,B,A}(\phi)\big](X,Y)\right)
=\tr\left(\big[\Gamma^{A,B}(\psi)\big](X)Y\right).
\end{align*}
\end{proposition}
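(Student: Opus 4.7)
My plan is to reduce to the case of elementary tensor symbols via the series decomposition of Proposition~\ref{new}, apply trace cyclicity to each summand, and reassemble using $w^*$-continuity.

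The core auxiliary identity I would establish first is
\[\tr\bigl(\Gamma^{A,B}(\alpha)(X)\,\Gamma^{B,A}(\beta)(Y)\bigr) = \tr\bigl(\Gamma^{A,B}(\gamma)(X)\,Y\bigr),\qquad \gamma(s,t):=\alpha(s,t)\beta(t,s),\]
for $\alpha\in L^{\infty}(\lambda_A\times\lambda_B)$ and $\beta\in L^{\infty}(\lambda_B\times\lambda_A)$. For elementary tensors $\alpha=f\otimes g$ and $\beta=h\otimes k$ it reduces to the one-line trace cyclicity calculation $\tr(f(A)Xg(B)h(B)Yk(A))=\tr((fk)(A)X(gh)(B)Y)$, matching $\gamma=(fk)\otimes(gh)$. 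To extend to arbitrary symbols I would observe that both sides are separately $w^*$-continuous in $\alpha$ and in $\beta$: on the left via the $w^*$-continuity of $\Gamma^{A,B}$ and $\Gamma^{B,A}$ (Proposition~\ref{nintegrals}) combined with the weak continuity of the Hilbert--Schmidt pairing on $\mathcal{S}^2(\mathcal{H})$; on the right via the additional observations that the variable swap $\beta\mapsto\beta^\vee$, with $\beta^\vee(s,t):=\beta(t,s)$, and multiplication by a fixed $L^{\infty}$-symbol are $w^*$-continuous operations on $L^\infty$. Since elementary tensors span a $w^*$-dense subspace of $L^{\infty}$, a two-step density argument completes the extension.

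With the auxiliary identity in hand, Proposition~\ref{new} applied with $A_1=A_3=A$ and $A_2=B$ gives, upon using absolute $\mathcal{S}^1$-convergence to pass the trace through the sum,
\[\tr\bigl([\Gamma^{A,B,A}(\phi)](X,Y)\bigr) = \sum_{n=1}^{\infty} \tr\bigl(\Gamma^{A,B}(a_n)(X)\Gamma^{B,A}(b_n)(Y)\bigr) = \sum_{n=1}^{\infty} \tr\bigl(\Gamma^{A,B}(c_n)(X)Y\bigr),\]
where $c_n(s,t):=a_n(s,t)b_n(t,s)$. By the Parseval-type identity $\sum_{n=1}^{N}c_n(s,t)\to\langle a(s,t),b(t,s)\rangle = \psi(s,t)$ pointwise, and since the partial sums are uniformly bounded by $\|a\|_{\infty}\|b\|_{\infty}$ by Cauchy--Schwarz, dominated convergence yields $\sum_{n=1}^{N} c_n\to\psi$ in the $w^*$-topology of $L^{\infty}(\lambda_A\times\lambda_B)$. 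One more application of $w^*$-continuity of $\Gamma^{A,B}$ and weak continuity of $\tr(\,\cdot\,Y)$ on $\mathcal{S}^2(\mathcal{H})$ collapses the right-hand series to $\tr\bigl([\Gamma^{A,B}(\psi)](X)Y\bigr)$.

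The main obstacle is the extension of the auxiliary identity from elementary tensors to arbitrary $L^{\infty}$ symbols. One cannot simply ``take a $w^*$-limit in $\phi$ and set $u=s$'' because $\psi$ is the restriction of $\phi$ to a null subset of $\sigma(A)\times\sigma(B)\times\sigma(A)$ and is not determined by the class of $\phi$ in $L^\infty(\lambda_A\times\lambda_B\times\lambda_A)$. The route above circumvents this by performing the restriction only after $\phi$ has been split into the rank-one pieces $a_n\otimes b_n$ provided by the Hilbert space factorization, each of which does admit an honest pointwise restriction.
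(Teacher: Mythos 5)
Your proposal is correct and follows essentially the same route as the paper: the auxiliary trace identity is exactly the paper's Lemma~\ref{auxiliary1} (proved the same way, elementary tensors plus a two-step $w^*$-limit), and the resummation of $\sum_n a_n(s,t)b_n(t,s)$ to $\psi$ via Cauchy--Schwarz domination and dominated convergence is precisely the content of the paper's Lemma~\ref{weakCV} (which delegates that step to the proof of \cite[Theorem 23]{CLS}). Your closing remark on why one cannot restrict a $w^*$-limit of $\phi$ to the diagonal $u=s$ correctly identifies the reason the factorization \eqref{star} is indispensable here.
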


To prove Proposition \ref{Simplf}, we need the following two lemmas.

\begin{lemma}\label{auxiliary1}
Let $A,B$ be selfadjoint operators densely defined in $\mathcal{H}$ and let $u,v : \mathbb{R}^2 \rightarrow \mathbb{C}$ be two bounded Borel functions. Then, for any $X,Y \in \mathcal{S}^2(\mathcal{H})$,
\begin{equation}\label{auxiliary}
\tr (\Gamma^{A,B}(u)(X) \Gamma^{B,A}(v)(Y)) = \tr \left(\left[\Gamma^{A,B}(u\tilde{v})\right](X)Y \right)
\end{equation}
where $\tilde{v}(s,t)=v(t,s)$.
\end{lemma}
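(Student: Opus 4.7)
My plan is a standard bilinear extension argument: first verify the identity on elementary tensors $u = u_1 \otimes u_2$ and $v = v_1 \otimes v_2$, extend by linearity to the algebraic tensor products $L^{\infty}(\lambda_A) \otimes L^{\infty}(\lambda_B)$ and $L^{\infty}(\lambda_B) \otimes L^{\infty}(\lambda_A)$, and then pass to general $u,v$ by $w^*$-density together with the $w^*$-continuity of $\Gamma^{A,B}$ and $\Gamma^{B,A}$ from Proposition \ref{nintegrals}.

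On an elementary tensor, the definition \eqref{MOItensor} gives
\[
\Gamma^{A,B}(u)(X)\,\Gamma^{B,A}(v)(Y) = u_1(A)\,X\,u_2(B)\,v_1(B)\,Y\,v_2(A),
\]
which lies in $\Sc^1(\H)$ since it contains two $\Sc^2$-factors and four bounded factors. Using cyclicity of the trace to move $v_2(A)$ to the front and then commuting $v_2(A)$ with $u_1(A)$ (both are bounded Borel functions of the same selfadjoint operator), I obtain
\[
\tr\bigl(u_1(A)\,X\,u_2(B)\,v_1(B)\,Y\,v_2(A)\bigr) = \tr\bigl(u_1(A)v_2(A)\,X\,u_2(B)v_1(B)\,Y\bigr).
\]
Since $\tilde v(s,t) = v(t,s)$ gives $\tilde v = v_2 \otimes v_1$ when $v = v_1\otimes v_2$, one has $u\tilde v = (u_1 v_2)\otimes (u_2 v_1)$, and the right-hand side is exactly $\tr\bigl([\Gamma^{A,B}(u\tilde v)](X)\,Y\bigr)$. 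Bilinearity then delivers the identity on the algebraic tensor products.

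For the extension, fix first $v \in L^{\infty}(\lambda_B)\otimes L^{\infty}(\lambda_A)$ algebraic and show that both sides are $w^*$-continuous in $u$. For the left-hand side, set $Z = \Gamma^{B,A}(v)(Y) \in \Sc^2(\H)$; then the functional $W \mapsto \tr(WZ)$ is weakly continuous on $\Sc^2(\H)$, and $u \mapsto \Gamma^{A,B}(u)(X)$ is $w^*$-to-weakly continuous by Proposition \ref{nintegrals}. For the right-hand side, the multiplication operator $u \mapsto u\tilde v$ is $w^*$-continuous on $L^{\infty}$ (it is the adjoint of multiplication by $\tilde v$ on $L^1$), so $u \mapsto \Gamma^{A,B}(u\tilde v)(X)$ is $w^*$-to-weakly continuous, and pairing with $Y \in \Sc^2(\H)$ via the trace yields $w^*$-continuity. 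Since algebraic tensors are $w^*$-dense in $L^{\infty}(\lambda_A\times\lambda_B)$, the identity extends to all $u$. Repeating the same argument in the variable $v$ (with $u \in L^{\infty}$ now arbitrary) completes the proof.

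The only subtlety worth flagging is bookkeeping the various topologies: one must pass $w^*$-convergence in the symbol through $\Gamma^{A,B}$ to weak convergence in $\Sc^2(\H)$, and then recover continuity of the trace by pairing against a fixed $\Sc^2$-element. Nothing here is deep, but care is needed because the trace is not jointly continuous on $\Sc^2\times\Sc^2$ in the weak topology; continuity in each argument separately is exactly what is used.
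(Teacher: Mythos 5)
Your argument is correct and follows essentially the same route as the paper's proof: verify the identity on elementary tensors via cyclicity of the trace and commutation of $u_1(A)$ with $v_2(A)$, extend by bilinearity, and then pass to general symbols using $w^*$-density of the algebraic tensor product together with the $w^*$-continuity of $\Gamma^{A,B}$, $\Gamma^{B,A}$ and of multiplication by a fixed bounded function. Your one-variable-at-a-time continuity argument is just a repackaging of the paper's iterated-limit computation with two nets, and your closing remark about separate (rather than joint) continuity of the trace pairing is exactly the right point of care.
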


\begin{proof}
Assume first that $u=u_1\otimes u_2$ and $v = v_1 \otimes v_2$ where $u_1,u_2,v_1,v_2 \in \text{Bor}(\mathbb{R})$. In this case, $\tilde{v} = v_2 \otimes v_1$ and $u\tilde{v} = u_1v_2 \otimes u_2v_1$. We have
$$\left[\Gamma^{A,B}(u)\right](X)=u_1(A)Xu_2(B) \ \ \text{and} \ \ \left[\Gamma^{B,A}(v)\right](Y) = v_1(B) Y v_2(A).$$
Then
\begin{align*}
\tr(\Gamma^{A,B}(u)(X) \Gamma^{B,A}(v)(Y))
& = \tr(u_1(A)Xu_2(B)v_1(B) Y v_2(A)) \\
& = \tr(v_2(A)u_1(A)Xu_2(B)v_1(B) Y) \\
& = \tr \left(\left[\Gamma^{A,B}(u\tilde{v})\right](X) Y \right).
\end{align*}
Hence, the equality $(\ref{auxiliary})$ is proved in this particular case and, by linearity, it holds true whenever $u,v \in \text{Bor}(\mathbb{R}) \otimes \text{Bor}(\mathbb{R})$.

In the general case, there exist two nets $(u_j)_j$ and $(v_i)_i$ in $\text{Bor}(\mathbb{R}) \otimes \text{Bor}(\mathbb{R})$ converging to $u$ and $v$ for the $w^*$-topologies of $L^{\infty}(\lambda_A \times \lambda_B)$ and $L^{\infty}(\lambda_B \times \lambda_A)$. The previous part implies that for all $i$ and $j$,
$$\tr (\Gamma^{A,B}(u_j)(X) \Gamma^{B,A}(v_i)(Y)) = \tr \left(\left[\Gamma^{A,B}(u_j\tilde{v_i})\right](X)Y \right).$$
Fix $j$. Since $(u_j \tilde{v_i})_i$ $w^*$-converges to $u_j\tilde{v}$, the $w^*$-continuity of $\Gamma^{A,B}$ implies that
$$\lim_i \tr \left(\left[\Gamma^{A,B}(u_j\tilde{v_i})\right](X)Y \right) = \tr \left(\left[\Gamma^{A,B}(u_j\tilde{v})\right](X)Y \right).$$
Similarly, the $w^*$-convergence of $(v_i)_i$ to $v$ implies that
$$\lim_i \tr (\Gamma^{A,B}(u_j)(X) \Gamma^{B,A}(v_i)(Y)) = \tr (\Gamma^{A,B}(u_j)(X) \Gamma^{B,A}(v)(Y)).$$
Hence, for all $j$,
$$\tr \left(\left[\Gamma^{A,B}(u_j\tilde{v})\right](X)Y \right) = \tr (\Gamma^{A,B}(u_j)(X) \Gamma^{B,A}(v)(Y)).$$
Taking the limit on $j$ in this equality and using $w^*$-continuity as above gives
$$\tr \left(\left[\Gamma^{A,B}(u\tilde{v})\right](X)Y \right) = \tr (\Gamma^{A,B}(u)(X) \Gamma^{B,A}(v)(Y)),$$
which is the desired equality.
\end{proof}

\begin{lemma}\label{weakCV}
For $\phi$ satisfying \eqref{star}, let $\psi : \mathbb{R}^2 \rightarrow \mathbb{C}$ be defined by $$\psi(s,t)=\phi(s,t,s).$$
Let $A,B$ be selfadjoint operators densely defined in $\mathcal{H}$. Then, for any $X\in \mathcal{S}^2(\mathcal{H})$, the series
\begin{equation}
\sum_{n=1}^{\infty} \left[\Gamma^{A,B}\left(a_n \widetilde{b_n}\right)\right](X)
\end{equation}
converges weakly to $\left[\Gamma^{A,B}(\psi)\right](X)$ in $\mathcal{S}^2(\mathcal{H})$, where $\widetilde{b_n}(t,u) = b_n(u,t).$
\end{lemma}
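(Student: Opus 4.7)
The plan is to reduce the claim to the $w^*$-continuity of $\Gamma^{A,B}$ provided by Proposition \ref{nintegrals}. By linearity, the $N$th partial sum of the series can be written as
\[ \sum_{n=1}^{N} \bigl[\Gamma^{A,B}(a_n \widetilde{b_n})\bigr](X) = \bigl[\Gamma^{A,B}(\sigma_N)\bigr](X), \qquad \sigma_N := \sum_{n=1}^{N} a_n \widetilde{b_n}, \]
so it is enough to show that $\sigma_N \to \psi$ in the $w^*$-topology of $L^{\infty}(\lambda_A \times \lambda_B)$ and then invoke $w^*$-continuity.

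The first ingredient is the pointwise identity $\psi(s,t) = \sum_{n=1}^{\infty} a_n(s,t)\, b_n(t,s)$ for every $(s,t) \in \R^2$. Expanding $a(s,t) = \sum_{n} a_n(s,t)\epsilon_n$ and $b(t,s) = \sum_{n} \overline{b_n(t,s)}\, \epsilon_n$ in the orthonormal basis $(\epsilon_n)$ and applying Parseval gives
\[ \psi(s,t) = \phi(s,t,s) = \bigl\langle a(s,t),\, b(t,s) \bigr\rangle = \sum_{n=1}^{\infty} a_n(s,t)\, b_n(t,s), \]
so $\sigma_N(s,t) \to \psi(s,t)$ pointwise on $\R^2$.

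The second ingredient is the uniform bound obtained from Cauchy--Schwarz: for every $(s,t) \in \R^2$ and every $N \in \N$,
\[ |\sigma_N(s,t)| \leq \Bigl(\sum_{n=1}^{N} |a_n(s,t)|^2\Bigr)^{1/2}\Bigl(\sum_{n=1}^{N} |b_n(t,s)|^2\Bigr)^{1/2} \leq \|a(s,t)\|_H\,\|b(t,s)\|_H \leq \|a\|_\infty \|b\|_\infty. \]
Pointwise convergence together with this uniform bound allows dominated convergence to be applied against any $g \in L^1(\lambda_A \times \lambda_B)$, giving $\int g\, \sigma_N\, d(\lambda_A \times \lambda_B) \to \int g\, \psi\, d(\lambda_A \times \lambda_B)$; equivalently, $\sigma_N \to \psi$ in the $w^*$-topology of $L^{\infty}(\lambda_A \times \lambda_B)$.

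Combining these two observations with the $w^*$-continuity of $\Gamma^{A,B}$ yields the required weak convergence of $[\Gamma^{A,B}(\sigma_N)](X)$ to $[\Gamma^{A,B}(\psi)](X)$ in $\mathcal{S}^2(\mathcal{H})$. No step in this argument is substantive; the plan is essentially Parseval together with dominated convergence, packaged through the $w^*$-continuity of double operator integrals that has already been established, so I anticipate no genuine obstacle.
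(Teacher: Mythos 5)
Your proof is correct and follows essentially the same route as the paper: reduce to $w^*$-convergence of the partial sums $\sigma_N\to\psi$ in $L^\infty(\lambda_A\times\lambda_B)$ and then apply the $w^*$-continuity of $\Gamma^{A,B}$ from Proposition \ref{nintegrals}. The only difference is that the paper cites the proof of \cite[Theorem 23]{CLS} for this $w^*$-convergence, whereas you supply the (standard and correct) Parseval--Cauchy--Schwarz--dominated-convergence argument directly.
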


\begin{proof}
We set
$$
\psi_n = \sum_{k=1}^n a_k \widetilde{b_k}.
$$
Then we have
$$
\left[\Gamma^{A,B}(\psi_n)\right](X) = \sum_{k=1}^n \left[\Gamma^{A,B}(a_k \widetilde{b_k})\right](X).
$$
By the proof of \cite[Theorem 23]{CLS},
$$
w^*-\underset{n\rightarrow +\infty}{\lim}\psi_n = \psi
$$
in the $w^*$-topology of $L^{\infty}(\lambda_{A} \times \lambda_B)$. By Proposition $\ref{nintegrals}$, $\Gamma^{A,B}$ is $w^*$-continuous, so we have
$$
\left[\Gamma^{A,B}(\psi_n) \right](X) \underset{n\rightarrow +\infty}{\longrightarrow}
\left[\Gamma^{A,B}(\psi)\right](X)
$$
weakly in $\mathcal{S}^2(\mathcal{H})$.
\end{proof}

\begin{proof}[Proof of Proposition \ref{Simplf}] We have, by $(\ref{CVseries})$,
\begin{equation*}
\left[\Gamma^{A,B,A}(\phi)\right](X,Y) = \sum_{n=1}^{\infty} \Gamma^{A,B}(a_n)(X) \Gamma^{B,A}(b_n)(Y).
\end{equation*}
By Lemma $\ref{weakCV}$,
\begin{equation}\label{weakCV1}
\left[\Gamma^{A,B}(\psi)\right](X) = \sum_{n=1}^{\infty} \left[\Gamma^{A,B}\left(a_n \widetilde{b_n}\right)\right](X),
\end{equation}
where the series converges for the weak topology of $\mathcal{S}^2(\mathcal{H})$.\\
By continuity of $\tr$ on $\mathcal{S}^1(\mathcal{H})$, we get
$$\tr \left( \left[\Gamma^{A,B,A}(\phi) \right](X,Y) \right) = \sum_{n=1}^{\infty} \tr \left(\Gamma^{A,B}(a_n)(X)\Gamma^{B,A}(b_n)(Y)\right).$$
By Lemma $\ref{auxiliary1}$ and \eqref{weakCV1}, we obtain
\begin{align*}
\tr \left( \left[\Gamma^{A,B,A}(\phi) \right](X,Y) \right)
& = \sum_{n=1}^{\infty} \tr \left(\left[\Gamma^{A,B}\left(a_n \widetilde{b_n}\right)\right](X)Y\right) \\
& = \tr \left( \left(\sum_{n=1}^{\infty} \left[\Gamma^{A,B}\left(a_n \widetilde{b_n}\right)\right](X) \right) Y \right) \\
& = \tr \left( \left[\Gamma^{A,B}(\psi)\right](X)Y \right).
\end{align*}
\end{proof}

\section{Approximation in multiple operator integrals}\label{Sectionapprox}

In this section, we establish an approximation property for multiple operator integrals that will play a key role in the proofs of our main results.

Let $A$ be a selfadjoint operator densely defined in $\mathcal{H}$. We say that a sequence $(A_j)_j$ of selfadjoint operators is resolvent strongly convergent to $A$ if for any $z\in\mathbb{C}\setminus\mathbb{R}$, $(z-A_j)^{-1}\to (z-A)^{-1}$ in the strong operator topology (SOT). According to \cite[Theorem 8.20]{SimRe}, this is equivalent to
\begin{equation}\label{R-SOT}
\forall\, f\in C_b(\mathbb{R}),\qquad f(A_j)\overset{SOT}
{\longrightarrow} f(A)\qquad \hbox{when}\ j\to\infty.
\end{equation}

\begin{proposition}\label{Approx1}
Let  $n\in\N$, $n\geq 2$, $A_1, \ldots, A_n$ be selfadjoint operators densely defined in $\mathcal{H}$ and for all $1\leq i\leq n$, let $(A_i^j)_{j\in \mathbb{N}}$ be a sequence of selfadjoint operators densely defined in $\mathcal{H}$ resolvent strongly convergent to $A_i$.
Then for any $\phi\in C_b(\mathbb{R}^n)$ and for any $K_1, \ldots, K_{n-1}\in \mathcal{S}^2(\mathcal{H})$,
\begin{equation}\label{Approx2}
\underset{j \to +\infty}{\lim} \left\|\Gamma^{A_1^j,\ldots, A_n^j}(\phi)(K_1, \ldots, K_{n-1}) -\Gamma^{A_1, \ldots, A_n}(\phi)(K_1, \ldots, K_{n-1}) \right\|_2=0.
\end{equation}
\end{proposition}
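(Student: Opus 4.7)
My plan is a multi-step density argument that reduces the general case to elementary tensors.

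For $\phi=f_1\otimes\cdots\otimes f_n$ with $f_i\in C_b(\R)$, identity \eqref{MOItensor} makes \eqref{Approx2} read
\[f_1(A_1^j)K_1f_2(A_2^j)\cdots K_{n-1}f_n(A_n^j)\longrightarrow f_1(A_1)K_1\cdots K_{n-1}f_n(A_n)\quad\text{in }\Sc^2.\]
The equivalent form \eqref{R-SOT} of strong resolvent convergence gives $f_i(A_i^j)\to f_i(A_i)$ in SOT, with $\|f_i(A_i^j)\|\le\|f_i\|_\infty$ uniformly in $j$. Combined with the standard observation that if $T_j\to T$ in SOT with $\sup_j\|T_j\|<\infty$ and $K\in\Sc^2(\H)$, then both $T_jK\to TK$ and $KT_j\to KT$ in $\Sc^2$-norm (via dominated convergence on $\sum_k\|(T_j-T)Ke_k\|^2$), a telescoping over the $n$ positions yields $\Sc^2$-convergence of the full product. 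Multilinearity in the symbol then extends \eqref{Approx2} to the linear span of elementary tensors of $C_b(\R)$ functions; restricting to factors in $C_0(\R)$ produces a selfadjoint, separating, nowhere-vanishing subalgebra of $C_0(\R^n)$, hence uniformly dense by Stone--Weierstrass. Combined with the uniform contraction bound $\|\Gamma^{A_1^j,\ldots,A_n^j}(\phi)(K_1,\ldots,K_{n-1})\|_2\le\|\phi\|_\infty\prod_i\|K_i\|_2$ from Proposition~\ref{nintegrals}, a $3\epsilon$-argument extends \eqref{Approx2} to every $\phi\in C_0(\R^n)$.

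To reach $C_b(\R^n)$, I fix a cutoff $\chi_R\in C_c(\R)$ with $\chi_R=1$ on $[-R,R]$ and $0\le\chi_R\le 1$, and set $\phi_R:=\phi\cdot\chi_R^{\otimes n}\in C_c(\R^n)$. Then \eqref{Approx2} holds for $\phi_R$ by the previous step, so it remains to show
\[\sup_j\bigl\|\Gamma^{A_1^j,\ldots,A_n^j}(\phi-\phi_R)(K_1,\ldots,K_{n-1})\bigr\|_2\xrightarrow[R\to\infty]{}0\]
and the analogous statement for the limit. I would use the telescoping $1-\chi_R^{\otimes n}=\sum_{i=1}^n(1-\chi_R(t_i))\prod_{k<i}\chi_R(t_k)$ together with the absorption identity, verified on elementary tensors and extended by linearity and $w^*$-continuity of $\Gamma$: for $g_1,\ldots,g_n\in C_b(\R)$,
\[\Gamma^{A_1^j,\ldots,A_n^j}\bigl(\phi\cdot(g_1\otimes\cdots\otimes g_n)\bigr)(K_1,\ldots,K_{n-1})=g_1(A_1^j)\,\Gamma^{A_1^j,\ldots,A_n^j}(\phi)\bigl(K_1g_2(A_2^j),\ldots,K_{n-1}g_n(A_n^j)\bigr).\]
Applied to each summand in the telescoping, this puts a single factor $(I-\chi_R(A_i^j))$ onto one $K$-slot (on $K_{i-1}$ from the right for $i\ge 2$, on $K_1$ from the left for $i=1$), yielding an $\Sc^2$-bound in which exactly one $K$-norm is replaced by $\|K_{i-1}(I-\chi_R(A_i^j))\|_2$ or $\|(I-\chi_R(A_1^j))K_1\|_2$.

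The main obstacle is therefore to prove that $\|(I-\chi_R(A_i^j))K\|_2\to 0$ as $R\to\infty$, uniformly in $j$, for any fixed $K\in\Sc^2(\H)$. Using the singular value decomposition $K=\sum_k\sigma_k|u_k\rangle\langle v_k|$, this reduces to $\sum_k\sigma_k^2\|(I-\chi_R(A_i^j))u_k\|^2\to 0$ uniformly in $j$. For each fixed $k$, the positive finite measures $\langle u_k,E^{A_i^j}(\cdot)u_k\rangle$ have fixed total mass $\|u_k\|^2$ and converge weakly (by \eqref{R-SOT} applied to $f\in C_b(\R)$) to $\langle u_k,E^{A_i}(\cdot)u_k\rangle$ of the same mass, so by Prokhorov's theorem they form a uniformly tight family in $j$. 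A head-tail split (choose $N$ with $\sum_{k>N}\sigma_k^2<\epsilon/2$, then apply uniform tightness for each $k\le N$) then yields the required estimate. The most delicate technical point is verifying the absorption identity for general $\phi\in C_b(\R^n)$, which rests on the $w^*$-density of elementary tensors in $L^\infty(\prod\lambda_{A_i^j})$, the $w^*$-continuity of $\Gamma^{A_1^j,\ldots,A_n^j}$ from Proposition~\ref{nintegrals}, and the $w^*$-continuity of multiplication by a fixed $L^\infty$ symbol.
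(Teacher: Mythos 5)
Your argument is correct, but it diverges from the paper's proof in two ways worth noting. First, the paper immediately reduces to rank-one $K_i=\overline{h_i}\otimes h_i'$ (using density of finite-rank operators and the uniform contraction bound), which makes both the elementary-tensor step and the cutoff step essentially trivial computations with vectors; you instead keep general $K_i\in\Sc^2(\H)$ throughout. Second, and more substantially, your reduction from $C_b(\R^n)$ to $C_0(\R^n)$ requires the tail estimate $\|(I-\chi_R(A_i^j))K\|_2\to 0$ \emph{uniformly in $j$}, which you obtain via narrow convergence of the scalar spectral measures $\langle u_k,E^{A_i^j}(\cdot)u_k\rangle$ and Prokhorov tightness. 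The paper avoids any uniform-in-$j$ statement by ordering the quantifiers differently: it first fixes the cutoff level $k_0$ using only the limit operators ($g_{k_0}(A_i)K_ig_{k_0}(A_{i+1})\to K_i$ as $k\to\infty$, immediate for rank-one $K_i$), and then uses $g_{k_0}(A_i^j)\to g_{k_0}(A_i)$ in SOT to transfer that single estimate to all large $j$ at the cost of an extra $\varepsilon$; three estimates then combine. Your route is heavier but proves a genuinely stronger (uniform) tail bound; the paper's is softer and shorter. Both use the same absorption identity (the paper's \eqref{Duplicate1}), verified on elementary tensors and extended by $w^*$-continuity. One small imprecision: your ``standard observation'' that $T_j\to T$ in SOT with uniform norm bounds implies $KT_j\to KT$ in $\Sc^2$ is false in general (it needs $T_j^*\to T^*$ in SOT; consider powers of the adjoint of the unilateral shift against a rank-one $K$). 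It holds here only because $T_j=f(A_i^j)$ is normal and $T_j^*=\bar f(A_i^j)\to\bar f(A_i)=T^*$ in SOT by \eqref{R-SOT}; you should say so explicitly, and the same remark applies to the estimate $\|K_{i-1}(I-\chi_R(A_i^j))\|_2=\|(I-\chi_R(A_i^j))K_{i-1}^*\|_2$ in your tail bound.
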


\begin{proof}
For simplicity we write $\Gamma=\Gamma^{A_1, \ldots, A_n}$ and $\Gamma_j=\Gamma^{A_1^j, \ldots, A_n^j}$ along this proof. Since the space of finite rank operators is dense in $\mathcal{S}^2(\mathcal{H})$
and $\|\Gamma_j\| \leq 1$ for any $j\geq 1$, it suffices  to prove (\ref{Approx2}) in the case when $K_1, \ldots, K_{n-1}$ are finite rank operators. By linearity, we can further assume they are rank one operators. Thus from now on we assume that for all $1\leq i\leq n-1$,
$$
K_i=\overline{h_i}\otimes h_i'
$$
with $h_i, h_i' \in \mathcal{H}$, where for all $1 \leq i \leq n-1$, $\overline{h_i}\otimes h_i'$ is the operator defined for all $h\in \mathcal{H}$ by
$$(\overline{h_i}\otimes h_i')h = \left\langle h, h_i \right\rangle h_i'.$$
Assume first that $\phi=u_1\otimes \cdots \otimes u_n$, with $u_i \in C_b(\mathbb{R})$ for all $i$. In this case,
\begin{align*}
\Gamma_j(\phi)(K_1, \ldots, K_{n-1}) & = u_1(A_1^j)(\overline{h_1}\otimes h_1')\ldots
(\overline{h_{n-1}}\otimes h_{n-1}')u_n(A_n^j)\\
& = \left(\prod_{k=2}^{n-1} \left\langle u_k(A_k^j)h_k', h_{k-1} \right\rangle \right)
\overline{\overline{u_n}(A_n^j)(h_{n-1})}\otimes u_1(A_1^j)(h_1').
\end{align*}
By the assumption and (\ref{R-SOT}), this converges to
$$
\left(\prod_{k=2}^{n-1} \left\langle u_k(A_k)h_k', h_{k-1} \right\rangle \right)
\overline{\overline{u_n}(A_n)(h_{n-1})}\otimes u_1(A_1)(h_1'),
$$
which in turn is equal to
$\Gamma(\phi)(K_1, \ldots, K_{n-1})$. This shows (\ref{Approx2}) in this special case.
By linearity and standard approximation, this implies that
(\ref{Approx2}) holds true whenever $\phi$ belongs to the uniform
closure of $C_b(\mathbb{R})\otimes \cdots \otimes  C_b(\mathbb{R})$. In particular,
(\ref{Approx2}) holds true when $\phi\in C_0(\mathbb{R}^n)$.

The rest of the proof consists in reducing to this case by a more subtle (i.e.
non uniform) approximation process. Let
$(g_k)_{k\geq 1}$ be a sequence of functions in
$C_0(\mathbb{R})$ satisfying the following two properties:
$$
\forall\, k\in\N, \quad 0\leq g_k\leq 1,
\qquad\hbox{and}\qquad\forall\, r\in \mathbb{R},\quad
g_k(r)\overset{k\to\infty}{\longrightarrow} 1.
$$
These properties imply that for all $1 \leq i \leq n$, $g_k(A_i)\to I_{\small\mathcal{H}}$ strongly. Indeed let $h\in\mathcal{H}$, then, by the spectral theorem,
$$
\|g_k(A_i)h - h\|^2 = \int_{\sigma(A_i)}\bigl(1-g_k(r)\bigr)^2\,\text{d} \left<E^{A_i}(r)h,h\right>.
$$
Then by Lebesgue's dominated convergence theorem,
$\|g_k(A_i)h - h\|^2\to 0$ when $k\to\infty$.

We consider an arbitrary $\phi\in C_b(\mathbb{R}^n)$ and set
$$
\phi_k =(g_k\otimes g_k^2\otimes \cdots \otimes g_k^2 \otimes g_k)\phi,\qquad k\in\N.
$$
Clearly each $\phi_k$ belongs to $C_0(\mathbb{R}^n)$, hence satisfies (\ref{Approx2}).
A crucial observation is that for all $j,k\in\N$,
\begin{equation}\label{Duplicate1}
\Gamma_j(\phi_k)(K_1, \ldots, K_{n-1}) = \Gamma_j(\phi)\left(g_k(A_1^j)K_1g_k(A_2^j), \ldots,
g_k(A_{n-1}^j)K_{n-1}g_k(A_n^j)\right).
\end{equation}
The argument for this identity is essentially the same as the one for the proof of Lemma $\ref{auxiliary1}$. One first checks the validity of (\ref{Duplicate1}) in the case when $\phi$ belongs to $C_b(\mathbb{R})\otimes \cdots \otimes  C_b(\mathbb{R})$, then one uses the $w^*$-continuity of $\Gamma_j$ to obtain the general case. Details are left to the reader.
Likewise we have, for all $k\in\N$,
\begin{equation}\label{Duplicate2}
\Gamma(\phi_k)(K_1, \ldots, K_{n-1}) = \Gamma(\phi)\left(g_k(A_1)K_1g_k(A_2), \ldots,
g_k(A_{n-1})K_{n-1}g_k(A_n)\right).
\end{equation}

For any $k\in\N$ and any $1\leq i\leq n-1$,
$$
g_k(A_i)K_ig_k(A_{i+1}) = g_k(A_i)(\overline{h_i}\otimes h_i')g_k(A_{i+1}) =
\overline{g_k(A_{i+1})(h_i)}\otimes g_k(A_i)(h_i'),
$$
hence $g_k(A_i)K_ig_k(A_{i+1})\to K_i$ in $\mathcal{S}^2(\mathcal{H})$ when $k\to\infty$.

Let $\varepsilon>0$.  According to the above observation, we fix $k_0\in\N$
such that for any $1 \leq i \leq n-1$,
$$\| g_{k_0}(A_i) K_i g_{k_0}(A_{i+1}) - K_i\|_2 \leq \varepsilon.$$
Hence, there exists a constant $\alpha > 0$ depending only on $\| \phi\|_{\infty}, \|K_1\|_2, \ldots, \|K_{n-1}\|_2$ such that
$$
\left\|\Gamma(\phi_{k_0})(K_1, \ldots, K_{n-1}) - \Gamma(\phi)(K_1,\ldots, K_{n-1})\right\|_2
\leq \alpha \varepsilon.
$$

Now, using that for any $1 \leq i \leq n-1$, $g_{k_0}(A_i^j)K_i g_{k_0}(A_{i+1}^j) = \overline{g_{k_0}(A_{i+1}^j)(h_i)} \otimes g_{k_0}(A_i^j)(h_i')$ and the fact that $g_{k_0}(A_i^j) \to g_{k_0}(A_i)$ and $g_{k_0}(A_{i+1}^j) \to g_{k_0}(A_{i+1})$ strongly when $j\to \infty$, we see that $g_{k_0}(A_i^j)K_i g_{k_0}(A_{i+1}^j) \to g_{k_0}(A_i)K_i g_{k_0}(A_{i+1})$ in $\mathcal{S}^2(\mathcal{H})$ when $j\to \infty$. Hence, for a large enough $j_0 \in\N$, we have, for any $1 \leq i \leq n-1$,

$$\| g_{k_0}(A_i^j) K_i g_{k_0}(A_{i+1}^j) - K_i\|_2 \leq 2\varepsilon$$
for any $j\geq j_0$. We deduce that there exists a constant $\beta >0$ depending only on $\|\phi\|_\infty, \|K_1\|_2,$ $\ldots, \|K_{n-1}\|_2$ such that
$$
\forall\, j\geq j_0,
\qquad
\left\|\Gamma_j(\phi_{k_0})(K_1, \ldots, K_{n-1}) - \Gamma_j(\phi)(K_1, \ldots, K_{n-1})\right\|_2
\leq \beta \varepsilon.
$$
Now recall that $\phi_k$ satisfies (\ref{Approx2}). Hence changing $j_0$
into a bigger integer if necessary we also have
$$
\forall\, j\geq j_0,
\qquad
\left\|\Gamma_j(\phi_{k_0})(K_1, \ldots, K_{n-1}) - \Gamma(\phi_{k_0})(K_1, \ldots, K_{n-1})\right\|_2
\leq \varepsilon.
$$
We deduce from the above three estimates that
$$
\forall\, j\geq j_0,
\qquad
\left\|\Gamma_j(\phi)(K_1, \ldots, K_{n-1}) - \Gamma(\phi)(K_1, \ldots, K_{n-1})\right\|_2
\leq (\alpha + \beta +1) \varepsilon.
$$
This shows that $\phi$ satisfies (\ref{Approx2}).
\end{proof}

\begin{remark}
The result of Proposition 3.1 in the case when $n=2$ is established in \cite[Corollary 3.5]{DeP-Suk} under different assumptions on the sequences $(A_1^j)_{j\in\N}$ and $(A_2^j)_{j\in\N}$.
\end{remark}

We demonstrate below that every selfadjoint operator is a limit of bounded selfadjoint operators in the strong resolvent sense.

\begin{lemma}\label{existence}
Let $A$ be a selfadjoint operator densely defined in $\mathcal{H}$. Let $E$ be the spectral measure of $A$ and define $A_n:=E((-n,n))A$ for every $n\in\N$. Then, the sequence of bounded selfadjoint operators $(A_n)_{n=1}^\infty$ is resolvent strongly convergent to $A$.
\end{lemma}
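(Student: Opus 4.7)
The plan is to identify $A_n$ with $\psi_n(A)$ where $\psi_n(t) := t\,\chi_{(-n,n)}(t)$ is a bounded real-valued Borel function. By the Borel functional calculus, $A_n = \psi_n(A)$ is then automatically a bounded selfadjoint operator on $\mathcal{H}$ (with $\|A_n\|\leq n$), and this coincides with $E((-n,n))A$ because $E((-n,n))\mathcal{H}\subset \text{Dom}(A)$ and $E((-n,n))$ commutes with $A$ on $\text{Dom}(A)$.

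To establish resolvent strong convergence, I would invoke the classical criterion: if $A_n,A$ are selfadjoint and there is a core $\mathcal{D}$ of $A$, contained in $\text{Dom}(A_n)$ for all $n$, such that $A_n h \to Ah$ for every $h\in\mathcal{D}$, then $(z-A_n)^{-1}\to (z-A)^{-1}$ strongly for every $z\in\mathbb{C}\setminus\mathbb{R}$. (A direct proof uses the resolvent identity $(z-A_n)^{-1}-(z-A)^{-1} = (z-A_n)^{-1}(A_n-A)(z-A)^{-1}$ combined with the uniform bound $\|(z-A_n)^{-1}\|,\|(z-A)^{-1}\|\leq |\Im z|^{-1}$ and a density argument.) Here one takes $\mathcal{D} = \text{Dom}(A)$, which is trivially a core of $A$ and sits inside $\text{Dom}(A_n)=\mathcal{H}$.

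The remaining task is to verify the pointwise convergence on $\text{Dom}(A)$. For $h\in\text{Dom}(A)$, the spectral theorem yields
\[\|A_n h - Ah\|^2 = \int_{\sigma(A)}\bigl|t\chi_{(-n,n)}(t)-t\bigr|^2\,dE^A_{h,h}(t) = \int_{|t|\geq n} t^2\,dE^A_{h,h}(t),\]
and since $\int_{\mathbb{R}} t^2\,dE^A_{h,h}(t) = \|Ah\|^2 < \infty$, this tail integral vanishes as $n\to\infty$ by dominated convergence. I do not anticipate any serious obstacle: the spectral-theoretic estimate is essentially a direct computation and the passage from core-convergence to strong resolvent convergence is a standard fact in unbounded operator theory, so the lemma reduces to these two routine ingredients.
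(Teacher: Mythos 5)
Your proposal is correct and follows essentially the same route as the paper: prove $A_nh\to Ah$ for $h$ in the domain of $A$ (the paper gets this from $E((-n,n))\to I$ strongly applied to $Ah$, you from the tail of $\int t^2\,dE^A_{h,h}$ — the same fact), then conclude via the resolvent identity $(A_n-z)^{-1}-(A-z)^{-1}=(A_n-z)^{-1}(A-A_n)(A-z)^{-1}$ together with the uniform bound $\|(A_n-z)^{-1}\|\leq|\Im z|^{-1}$ and the observation that $(A-z)^{-1}f\in\mathrm{Dom}(A)$. The density step you mention is not even needed once one notes that the resolvent maps all of $\mathcal{H}$ into $\mathrm{Dom}(A)$.
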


\begin{proof}
Since $E((-n,n))$ converges to $I$ in the strong operator topology,
\begin{align}
\label{1An}
\lim_{n\rightarrow\infty}A_n g=Ag
\end{align}
for every $g\in D$, where $D$ is the domain of $A$.
Let $z\in\mathbb{C}\setminus\mathbb{R}$ and $f\in\mathcal{H}$. The mapping $A-z : D \rightarrow \mathcal{H}$ is a bijection so that $(A-z)^{-1}f \in D$. By standard properties of the resolvent,
\begin{align}
\label{2An}
(A_n-z)^{-1}f-(A-z)^{-1}f=(A_n-z)^{-1}(A-A_n)(A-z)^{-1}f.
\end{align}
The result follows from combining \eqref{1An} and \eqref{2An} and applying uniform boundedness of $(A_n-z)^{-1}$.
\end{proof}

We finish this section with a lemma that will be used in Section $\ref{Perturbation}$.

\begin{lemma}\label{RC}
Let $(A_n)_{n=1}^\infty$ be a sequence of selfadjoint operators converging resolvent strongly to a selfadjoint operator $A$. Let $K$ be a bounded self-adjoint operator. Then $(A_n+K)_{n=1}^\infty$ is resolvent strongly convergent to $A+K$.
\end{lemma}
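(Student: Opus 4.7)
My plan is to combine the second resolvent identity with a Neumann series argument, reducing the conclusion to the given hypothesis that $(A_n)$ is strongly resolvent convergent to $A$. Fix $z_0 \in \mathbb{C}\setminus\mathbb{R}$ with $|\mathrm{Im}(z_0)| > \|K\|$ and abbreviate $R_n := (z_0-A_n)^{-1}$, $R := (z_0-A)^{-1}$, $S_n := (z_0-(A_n+K))^{-1}$, $S := (z_0-(A+K))^{-1}$. The second resolvent identity gives $S_n - R_n = S_n K R_n$, hence $S_n(I - KR_n) = R_n$; combined with $\|KR_n\| \leq \|K\|/|\mathrm{Im}(z_0)| < 1$, Neumann inversion yields
\[
S_n = \sum_{k=0}^{\infty} R_n (KR_n)^k \quad\text{and analogously}\quad S = \sum_{k=0}^{\infty} R (KR)^k.
\]

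To pass to the limit termwise, I use that since $\|R_n\| \leq |\mathrm{Im}(z_0)|^{-1}$ is uniformly bounded and $R_n \to R$ in SOT, any norm-convergent sequence $\eta_n \to \eta$ in $\mathcal{H}$ satisfies $R_n\eta_n \to R\eta$. A straightforward induction on $k$ then gives $R_n(KR_n)^k\xi \to R(KR)^k\xi$ in $\mathcal{H}$ for every $\xi \in \mathcal{H}$ and every $k \geq 0$. The uniform geometric majorization $\|R_n(KR_n)^k\| \leq |\mathrm{Im}(z_0)|^{-1}(\|K\|/|\mathrm{Im}(z_0)|)^k$ legitimates the interchange of the sum with the strong limit, giving $S_n\xi \to S\xi$ in $\mathcal{H}$ and thus strong resolvent convergence of $(A_n + K)$ to $A+K$ at the single point $z_0$.

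To extend the convergence to every $z \in \mathbb{C}\setminus\mathbb{R}$, I invoke the Taylor expansion of the resolvent about $z_0$: for any selfadjoint operator $T$, the Neumann identity $(w-T)^{-1} = \sum_{k=0}^{\infty}(z_0-w)^k(z_0-T)^{-(k+1)}$ is valid whenever $|w-z_0| < |\mathrm{Im}(z_0)|$, since $\|(z_0-T)^{-1}\| \leq |\mathrm{Im}(z_0)|^{-1}$. Reapplying the same strong-limit-plus-domination argument to this series propagates strong convergence of $(z_0-(A_n+K))^{-1}$ to strong convergence of $(w-(A_n+K))^{-1}$ throughout the disk $|w-z_0| < |\mathrm{Im}(z_0)|$. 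Chaining such disks along a path, and running the first step separately with $z_0 = iy$ for $|y| > \|K\|$ of each sign, covers the whole of $\mathbb{C}\setminus\mathbb{R}$. I do not expect any serious obstacle: the only delicate point is the repeated interchange of limits with infinite sums, and this is uniformly controlled by the geometric bounds above, so the lemma is essentially a standard stability result for strong resolvent convergence under bounded selfadjoint perturbations.
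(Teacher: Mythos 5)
Your argument is correct, but it takes a genuinely different route from the paper. You expand $(z_0-(A_n+K))^{-1}$ as a Neumann series $\sum_k R_n(KR_n)^k$, which forces the restriction $|\Im(z_0)|>\|K\|$, and you then recover all non-real $z$ by a second Neumann expansion of the resolvent about $z_0$ together with a disk-chaining argument in each half-plane; each step is legitimate (termwise SOT convergence of uniformly bounded products plus geometric domination justifies both interchanges, and starting from $z_0=iy$ with $\pm y>\|K\|$ covers both half-planes). The paper avoids all of this with a single algebraic identity: writing $R_n=(A_n-z)^{-1}$, $S_n=(A_n+K-z)^{-1}$ and similarly $R$, $S$, the two second-resolvent identities $R(I-KS)=S$ and $(I-S_nK)R_n=S_n$ combine to give
\begin{equation*}
\bigl(I-S_nK\bigr)\bigl(R_n-R\bigr)\bigl(I-KS\bigr)=S_n-S,
\end{equation*}
so that $\|(S_n-S)f\|\leq\bigl(1+\|K\|/|\Im(z)|\bigr)\,\|(R_n-R)(I-KS)f\|\to 0$ directly, for \emph{every} $z\in\mathbb{C}\setminus\mathbb{R}$, with no smallness condition, no infinite series, and no continuation step. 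Your proof buys nothing extra here beyond being a template for the general stability principle that strong resolvent convergence at one non-real point per half-plane implies it everywhere; the paper's sandwich identity is shorter and eliminates the two limit--sum interchanges and the chaining that are the only delicate points in your write-up.
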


\begin{proof}
Let $z\in\mathbb{C}$ be such that $\Im(z)\neq 0$.
Note that the operators
$$I-K(A+K-z)^{-1}$$
and
$$I-(A_n+K-z)^{-1}K$$
are bounded in the operator norm by $1+\|K\|/|\Im(z)|$. It is straightforward to see that
\begin{align}\label{rc3}
\begin{split}
& \left(I-(A_n+K-z)^{-1}K \right) \left[(A_n-z)^{-1}-(A-z)^{-1} \right] \left(I-K(A+K-z)^{-1} \right) \\
& = \left((A_n-z)^{-1}-(A_n+K-z)^{-1}K(A_n-z)^{-1} \right) \left(I-K(A+K-z)^{-1} \right)\\
& \ \ \ - \left(I-(A_n+K-z)^{-1}K \right) \left((A-z)^{-1}- (A-z)^{-1}K(A+K-z)^{-1} \right).
\end{split}
\end{align}
Since $K$ is bounded, $A$ and $A+K$ have the same domain, we have the resolvent formula
$$
(A-z)^{-1} - (A+K-z)^{-1} = (A-z)^{-1}K(A+K-z)^{-1}.
$$
Similarly, for every $n\geq 1$,
$$
(A_n-z)^{-1} - (A_n+K-z)^{-1} = (A_n+K-z)^{-1}K(A_n-z)^{-1}.
$$
Applying the latter in \eqref{rc3} gives
\begin{align*}
& \left(I-(A_n+K-z)^{-1}K \right) \left[(A_n-z)^{-1}-(A-z)^{-1} \right] \left(I-K(A+K-z)^{-1} \right) \\
& = (A_n+K-z)^{-1} \left(I-K(A+K-z)^{-1} \right) - \left(I-(A_n+K-z)^{-1}K \right) (A+K-z)^{-1}\\
& = (A_n+K-z)^{-1}-(A+K-z)^{-1}.
\end{align*}
Hence, for any $f\in\mathcal{H}$,
\begin{align*}
&\left\|\big((A_n+K-z)^{-1}-(A+K-z)^{-1}\big)f\right\|\\
&\leq\big(1+||K||/|\Im(z)|\big)
\left\|\big((A_n-z)^{-1}-(A-z)^{-1}\big)\big(I-K(A+K-z)^{-1}\big)f\right\|,
\end{align*}
which completes the proof of the lemma.
\end{proof}

\vspace{0.3cm}

\section{Differentiability of $t \mapsto f(A+tK) - f(A)$ in $\mathcal{S}^2(\mathcal{H})$}\label{Perturbation}

In this section we prove our first main result stated in the theorem below.

\begin{theorem}\label{Formula}
Let $A$ and $K$ be selfadjoint operators densely defined in $\mathcal{H}$ with $K\in \mathcal{S}^2(\mathcal{H})$. Let $n \in\N$ and $f\in C^n(\mathbb{R})$.  Assume that either $A$ is bounded or $f^{(i)}$ is bounded for all $1\leq i\leq n$. Consider the function
$$\varphi : t\in \mathbb{R} \mapsto f(A+tK) - f(A) \in \mathcal{S}^2(\mathcal{H}).$$
\begin{enumerate}
\item[$(i)$] The function $\varphi$ belongs to $C^n(\mathbb{R},\Sc^2(\H))$ and for every integer $1 \leq k \leq n$ and $t\in \mathbb{R}$,
\begin{equation}\label{diffenrential}
\dfrac{1}{k!} \varphi^{(k)}(t) = \left[ \Gamma^{A+tK, A+tK, \ldots, A+tK}(f^{[k]}) \right] (K, \ldots, K).
\end{equation}
\item[$(ii)$] The operator Taylor remainder satisfies
\begin{equation}\label{Taylor}
f(A+K)-f(A) - \sum_{k=1}^{n-1} \dfrac{1}{k!} \varphi^{(k)}(0) = \left[ \Gamma^{A+K, A, \ldots, A}(f^{[n]}) \right] (K, \ldots, K).
\end{equation}
\end{enumerate}
\end{theorem}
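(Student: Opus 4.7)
The plan is to prove both (i) and (ii) by induction on $n$, using a single perturbation formula for multiple operator integrals as the central algebraic engine. The formula I would establish reads
\begin{align*}
& \bigl[\Gamma^{A_0,\ldots,B,\ldots,A_{n-1}}(f^{[n-1]})\bigr](K_1,\ldots,K_{n-1}) - \bigl[\Gamma^{A_0,\ldots,B',\ldots,A_{n-1}}(f^{[n-1]})\bigr](K_1,\ldots,K_{n-1})\\
& \qquad = \bigl[\Gamma^{A_0,\ldots,B,B',\ldots,A_{n-1}}(f^{[n]})\bigr](K_1,\ldots,K_i,B-B',K_{i+1},\ldots,K_{n-1}),
\end{align*}
for selfadjoint $B,B'$ with $B-B'\in\Sc^2(\H)$ placed at positions $i$ and $i+1$ of the enlarged integral. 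Its scalar counterpart is the divided-difference recursion $f^{[n-1]}(\ldots,y,\ldots)-f^{[n-1]}(\ldots,z,\ldots)=(y-z)f^{[n]}(\ldots,y,z,\ldots)$. For pure-tensor symbols the operator identity is a one-line consequence of $g(B)-g(B')=[\Gamma^{B,B'}(g^{[1]})](B-B')$, and I would extend it to $f^{[n-1]}$ by $w^*$-density of tensor-product symbols together with the $w^*$-continuity of $\Gamma$ supplied by Proposition \ref{nintegrals}.

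Given this tool, I would deduce (ii) by induction on $n$. The base case $n=1$ is the standard identity $f(A+K)-f(A)=[\Gamma^{A+K,A}(f^{[1]})](K)$. For the inductive step, assuming (ii) at $n-1$, apply the perturbation formula with $B=A+K$, $B'=A$ at position $0$ and $A_1=\cdots=A_{n-1}=A$, to obtain
$$\bigl[\Gamma^{A+K,A,\ldots,A}(f^{[n-1]})\bigr](K,\ldots,K) - \bigl[\Gamma^{A,\ldots,A}(f^{[n-1]})\bigr](K,\ldots,K) = \bigl[\Gamma^{A+K,A,\ldots,A}(f^{[n]})\bigr](K,\ldots,K).$$
Part (i) identifies the second term on the left with $\frac{1}{(n-1)!}\varphi^{(n-1)}(0)$, and combining with the inductive hypothesis rewriting of the first term produces (ii) at level $n$.

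For (i) I would run a parallel induction on $n$. The base case is \cite[Prop. 6.11]{DeP-Suk} applied at the shifted base operator $A+tK$ (the hypotheses on $f$ are preserved by the shift), and $\Sc^2$-continuity of $\varphi'$ follows from Proposition \ref{Approx1} combined with the strong resolvent convergence $A+sK\to A+tK$ supplied by Lemma \ref{RC}. For the inductive step, set $A_t=A+tK$ and telescope through the $n$ positions in the MOI representation of $\varphi^{(n-1)}(s)-\varphi^{(n-1)}(t)$, switching one occurrence of $A_s$ to $A_t$ at a time. Applying the perturbation formula to each of the $n$ telescoping differences, with $B-B'=A_s-A_t=(s-t)K$, cancels the factor $s-t$ and yields
$$\frac{\varphi^{(n-1)}(s)-\varphi^{(n-1)}(t)}{s-t} = (n-1)!\sum_{i=0}^{n-1} \bigl[\Gamma^{A_s,\ldots,A_s,A_t,\ldots,A_t}(f^{[n]})\bigr](K,\ldots,K),$$
where the $i$-th summand has $i+1$ copies of $A_s$ followed by $n-i$ copies of $A_t$. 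By Lemma \ref{RC} and Proposition \ref{Approx1}, each summand converges in $\Sc^2$-norm as $s\to t$ to $\bigl[\Gamma^{A_t,\ldots,A_t}(f^{[n]})\bigr](K,\ldots,K)$, producing the factor $n$ that combines with $(n-1)!$ to give the claimed identity. The same approximation argument delivers continuity of $\varphi^{(n)}$.

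The hard part is the perturbation formula itself at the level of divided-difference symbols: for tensor products it is a routine calculation, but the passage to $f^{[n-1]}$ requires identifying $f^{[n]}$ as the correct symbol on $n+1$ variables and interpreting the operator difference $B-B'$ as a Hilbert--Schmidt input of the enlarged integral, neither of which is automatic from $w^*$-continuity alone. A secondary but more routine point is the reduction from the bounded-$A$ case to the bounded-$f^{(i)}$ case, which I would handle by truncating $f$ outside a compact interval containing the (uniformly bounded) spectra of $\{A+tK\}_{t\in[0,1]}$.
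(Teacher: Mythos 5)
Your overall architecture coincides with the paper's: the same perturbation identity (the paper's Lemma \ref{simplification}/Corollary \ref{simplificationUNB}), the same telescoping of $\varphi^{(k)}(t+s)-\varphi^{(k)}(t)$ through the $k+1$ slots, the same passage to the limit via Lemma \ref{RC} and Proposition \ref{Approx1}, and the same induction for the Taylor remainder. The problem is that the one step you yourself flag as ``the hard part'' is left genuinely open, and the mechanism you propose for it does not work. You want to pass from elementary tensors $u_1\otimes\cdots\otimes u_n$ to $f^{[n-1]}$ using ``$w^*$-density of tensor-product symbols together with the $w^*$-continuity of $\Gamma$.'' But the identity you need has \emph{different symbols on its two sides}: the left-hand side involves $\Gamma(\phi)$ while the right-hand side involves $\Gamma(\phi_i^{[1]})$, where $\phi\mapsto\phi_i^{[1]}$ inserts a divided difference (i.e.\ involves $\partial_i\phi$). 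If $(\phi_j)_j$ is a net of tensors with $\phi_j\to\phi$ in the $w^*$-topology, there is no reason whatsoever that $(\phi_j)_i^{[1]}\to\phi_i^{[1]}$ in the $w^*$-topology of the larger $L^\infty$ space: differentiation is not $w^*$-continuous. So $w^*$-continuity of $\Gamma$ lets you pass to the limit on the left-hand side but not on the right-hand side, and the argument stalls exactly where the real work is.

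The paper closes this gap with a \emph{uniform}, spectrum-adapted approximation rather than a generic $w^*$ one: with all spectra contained in $[-M,M]$, it chooses polynomials $Q_m\to\partial_i\phi$ uniformly on $[-M,M]^n$, sets $P_m(t_1,\ldots,t_n)=\int_0^{t_i}Q_m(\ldots,\theta,\ldots)\,d\theta$ plus a correction term $w_m$ independent of $t_i$, so that simultaneously $P_m+w_m\to\phi$ and $(P_m+w_m)_i^{[1]}\to\phi_i^{[1]}$ uniformly on the relevant compacts. Since $P_m+w_m$ lies in $C^1(\mathbb{R})\otimes\cdots\otimes C^1(\mathbb{R})$, the tensor case applies, and uniform convergence of both symbols suffices to pass to the limit on both sides. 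This construction is the missing idea in your proposal; note also that it requires bounded operators, which is why the paper first proves the formula for bounded $A_i$ and only then extends to unbounded ones by strong resolvent approximation (your truncation remark handles the bounded-$A$/unbounded-$f^{(i)}$ dichotomy in the statement, but not this internal reduction). Everything else in your outline — the base case from \eqref{f(A)-f(B)}, the telescoping count producing the factor $n$, and the induction for (ii) — matches the paper and is sound modulo this lemma.
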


We first recall the definition of the divided difference. Let $f\in C^1(\mathbb{R})$. The divided
difference of the first order $f^{[1]}\colon\mathbb{R}^2\to\mathbb{C}$
is defined by
\begin{align*}
{f^{[1]} (x_0,x_1)} :=
\begin{cases}\frac
{ f(x_0) - f(x_1)}{x_0-x_1}, & \text{if}\ x_0
\neq x_1 \\
f'(x_0) & \text{if}\ x_0=x_1
\end{cases}, \qquad x_0, x_1\in\mathbb{R}.
\end{align*}
The function $f^{[1]}$ belongs to $C(\mathbb{R}^2)$ and
if $f'$ is bounded, then $f^{[1]}\in C_b(\mathbb{R}^2)$.
If $n\geq 2$ and $f\in C^n(\mathbb{R})$, the divided difference of the $n$th
order $f^{[n]}\colon\mathbb{R}^{n+1}\to\mathbb{C}$
is defined recursively by
\begin{align*}
{f^{[n]} (x_0,x_1,\ldots,x_n)} :=
\begin{cases}\frac
{f^{[n-1]}(x_0,x_2,\ldots,x_n) - f^{[n-1]}(x_1, x_2 \ldots,x_n)}{x_0-x_1}, & \text{if}\ x_0
\neq x_1 \\
\partial_1 f^{[n-1]}(x_1,x_2,\ldots,x_n) & \text{if}\ x_0=x_1
\end{cases},
\end{align*}
for all $x_0, \ldots, x_n \in \mathbb{R}$, where $\partial_i$ stands for the partial derivative with respect to the $i$-th variable.

It is well-known that $f^{[n]}$ is symmetric under permutation of its arguments. Therefore, for all $1 \leq i \leq n$ and for all $x_0, \ldots, x_n \in \mathbb{R}$,
\begin{align*}
{f^{[n]} (x_0,x_1,\ldots,x_n}) =
\begin{cases}\frac
{f^{[n-1]}(x_0,\ldots,x_{i-1}, x_{i+1}, \ldots,x_n) - f^{[n-1]}(x_0,\ldots,x_{i-2}, x_{i},x_{i+1},\ldots,x_n)}{x_{i-1}-x_i}, & \text{if}\ x_{i-1}
\neq x_i \\
\partial_i f^{[n-1]}(x_1,\ldots,x_n) & \text{if}\ x_{i-1}=x_i.
\end{cases},
\end{align*}
Note for further use that for all $1 \leq i \leq n$ and for all $(x_0, \ldots, x_n) \in\mathbb{R}^{n+1}$,
\begin{equation}\label{formuladivdiff}
{f^{[n]} (x_0, \ldots, x_n)}  = \int_0^1\partial_i f^{[n-1]}(x_0, \ldots, x_{i-2}, tx_{i-1} + (1-t)x_i,x_{i+1}, \ldots, x_n)\,\text{d}t\,.
\end{equation}
The function $f^{[n]}$ belongs to $C(\mathbb{R}^{n+1})$;
if $f^{(n)}$ is bounded, then $f^{[n]}\in C_b(\mathbb{R}^{n+1})$.\\

In order to prove Theorem $\ref{Formula}$, we need algebraic properties of multiple operator integrals established below.

\begin{lemma}\label{separation}
Let $n,k\in\N$, $n \geq 3$ and $1 \leq k \leq n-2$. Let $u\in C_b(\mathbb{R}^{k+1})$ and $v \in C_b(\mathbb{R}^{n-k})$.
Define
$$(uv)(t_1, \ldots, t_n) = u(t_1, \ldots, t_{k+1}) v(t_{k+1}, \ldots, t_n)\quad\text{for } (t_1, \ldots, t_n) \in \mathbb{R}^n$$
and let $A_1, \ldots, A_n$ be selfadjoint operators densely defined in a separable Hilbert space $\mathcal{H}$. Then, for any $K_1, \ldots, K_{n-1} \in \mathcal{S}^2(\mathcal{H})$,
\begin{align*}
& \Gamma^{A_1, \ldots, A_n}(uv)(K_1, \ldots, K_{n-1})\\
& \ \ \ \ \ \ \ \ = \Gamma^{A_1, \ldots, A_{k+1}}(u)(K_1, \ldots, K_k) \Gamma^{A_{k+1}, \ldots, A_n}(v)(K_{k+1}, \ldots, K_{n-1}).
\end{align*}
\end{lemma}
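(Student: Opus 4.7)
My plan is to establish the identity in two stages, mirroring the structure of the proof of Lemma \ref{auxiliary1}: first verify it when $u$ and $v$ are elementary tensors of functions in $C_b(\mathbb{R})$, and then extend to general $u \in C_b(\mathbb{R}^{k+1})$ and $v \in C_b(\mathbb{R}^{n-k})$ by two successive $w^*$-continuity arguments.

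First I would take $u = u_1 \otimes \cdots \otimes u_{k+1}$ and $v = v_1 \otimes \cdots \otimes v_{n-k}$ with each factor in $C_b(\mathbb{R})$. Then
\[ uv = u_1 \otimes \cdots \otimes u_k \otimes (u_{k+1}v_1) \otimes v_2 \otimes \cdots \otimes v_{n-k}, \]
an elementary tensor of $n$ bounded continuous functions. Applying the defining formula (\ref{MOItensor}) to each of the three multiple operator integrals and using the multiplicativity $u_{k+1}(A_{k+1})v_1(A_{k+1}) = (u_{k+1}v_1)(A_{k+1})$ of the Borel functional calculus yields the claimed identity directly. Bilinearity then extends the conclusion to $u, v$ in the algebraic tensor products $C_b(\mathbb{R})^{\otimes(k+1)}$ and $C_b(\mathbb{R})^{\otimes(n-k)}$.

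To reach general $u$ and $v$, I would choose nets $(u_\alpha)$ and $(v_\beta)$ in $\text{Bor}(\mathbb{R})^{\otimes(k+1)}$ and $\text{Bor}(\mathbb{R})^{\otimes(n-k)}$ converging $w^*$-ly to $u$ and $v$ in $L^{\infty}(\prod_{i=1}^{k+1}\lambda_{A_i})$ and $L^{\infty}(\prod_{i=k+1}^{n}\lambda_{A_i})$, respectively. With $\beta$ fixed, multiplication by the bounded function $v_\beta$ (regarded on $\mathbb{R}^n$ as depending only on the last $n-k$ coordinates) is $w^*$-continuous, so $u_\alpha v_\beta \to u v_\beta$ in the $w^*$-topology of $L^{\infty}(\prod_{i=1}^n \lambda_{A_i})$; Proposition \ref{nintegrals} then delivers weak $\Sc^2(\H)$-convergence of the left-hand side of the already-established tensor-case identity, while simultaneously $\Gamma^{A_1,\ldots,A_{k+1}}(u_\alpha)(K_1,\ldots,K_k) \to \Gamma^{A_1,\ldots,A_{k+1}}(u)(K_1,\ldots,K_k)$ weakly in $\Sc^2(\H)$. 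Passing to the limit in $\alpha$ proves the identity for arbitrary $u \in C_b(\mathbb{R}^{k+1})$ and $v = v_\beta$; a symmetric argument, now letting $\beta$ vary with $u$ fixed, completes the proof.

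The main subtlety is justifying passage to the weak $\Sc^2$-limit through the product on the right-hand side: the joint weak continuity of multiplication $\Sc^2(\H) \times \Sc^2(\H) \to \Sc^1(\H)$ fails in general, but here only one factor varies while the other is fixed, and right-multiplication by a bounded operator $Y$ is $w^*$-continuous on the Hilbert space $\Sc^2(\H)$ (being the adjoint of left-multiplication by $Y^*$ with respect to $\langle X, Z\rangle = \tr(X^*Z)$). This is what makes the two-step approximation scheme go through without additional structural hypotheses on $u$ or $v$.
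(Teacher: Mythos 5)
Your proposal is correct and follows exactly the route the paper takes: verify the identity on elementary tensors via \eqref{MOItensor} and the multiplicativity of the functional calculus at $A_{k+1}$, then extend by two successive $w^*$-limits as in the proof of Lemma \ref{auxiliary1} (the paper leaves these details to the reader). Your added justification that one-sided multiplication by a fixed operator is weakly continuous on $\mathcal{S}^2(\mathcal{H})$ is precisely the point that makes the iterated limit legitimate.
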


\begin{proof}
The proof is straightforward for $u$ and $v$ elementary tensors of elements of $C_b(\mathbb{R})$. Then, one uses the $w^*$-continuity of multiple operator integrals as in Lemma $\ref{auxiliary1}$ to obtain the general case.
\end{proof}

\begin{lemma}\label{simplification}
Let $n \geq 2$ be an integer. Let $A_1, \ldots, A_{n-1}, A, B\in\mathcal{B}(\mathcal{H})$ and assume that $B-A \in \mathcal{S}^2(\mathcal{H})$. Let $f\in C^n(\mathbb{R})$. Then, for all $K_1, \ldots, K_{n-1} \in \mathcal{S}^2(\mathcal{H})$ and for any $1 \leq i \leq n$ we have
\begin{align*}
& \ \ \ \ \ \ \ \ \ \ \ \ \ \left[ \Gamma^{A_1, \ldots A_{i-1}, B, A_i, \ldots, A_{n-1}}(f^{[n-1]})\right] (K_1, \ldots, K_{n-1}) \\
& \ \ \ \ \ \ \ \ \ \ \ \ \ - \left[ \Gamma^{A_1, \ldots A_{i-1}, A, A_i, \ldots, A_{n-1}}(f^{[n-1]})\right] (K_1, \ldots, K_{n-1}) \\
& = \left[ \Gamma^{A_1, \ldots, A_{i-1}, B, A, A_i, \ldots, A_{n-1}}(f^{[n]})\right](K_1, \ldots, K_{i-1}, B-A, K_i, \ldots K_{n-1}).
\end{align*}
\end{lemma}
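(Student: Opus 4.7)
The plan is to reduce the identity to monomial symbols and then propagate it by linearity and approximation. I would first establish it for tensor products $\phi=g_1\otimes\cdots\otimes g_n$ with monomial factors and $\psi=g_1\otimes\cdots\otimes g_{i-1}\otimes g_i^{[1]}\otimes g_{i+1}\otimes\cdots\otimes g_n$ (the variable-$i$ divided difference of $\phi$); then extend by linearity to polynomial $f$; and finally pass to arbitrary $f\in C^n(\mathbb{R})$ via polynomial approximation and the $w^*$-continuity of the multiple operator integral from Proposition \ref{nintegrals}.

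For the tensor-monomial step, two successive applications of Lemma \ref{separation} (first at the overlap variable in position $i$, then at position $i+1$) factor the multiple operator integral on the proposed right-hand side as
\[
g_1(A_1)K_1\cdots K_{i-1}\cdot\Gamma^{B,A}(g_i^{[1]})(B-A)\cdot K_ig_{i+1}(A_i)\cdots K_{n-1}g_n(A_{n-1}).
\]
For $g_i(x)=x^{k}$ the middle factor telescopes to $g_i(B)-g_i(A)$ via the well-known identity $\sum_{j=0}^{k-1}B^{j}(B-A)A^{k-1-j}=B^{k}-A^{k}$, so the whole product equals $\Gamma^{\ldots,B,\ldots}(\phi)(K_1,\ldots,K_{n-1})-\Gamma^{\ldots,A,\ldots}(\phi)(K_1,\ldots,K_{n-1})$, establishing the identity in this case. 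For a polynomial $f$, both $f^{[n-1]}$ and $f^{[n]}$ are finite sums of tensor monomials (the complete-homogeneous-symmetric-polynomial formula), and since divided difference is linear in $f$ the two expansions are compatible term-by-term; summing the monomial identity yields the lemma for polynomial $f$.

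To upgrade to $f\in C^n(\mathbb{R})$, note that boundedness of $A_1,\ldots,A_{n-1},A,B$ confines all spectra to a common compact interval $[-M,M]$. Approximating $f^{(n)}$ uniformly on $[-M,M]$ by polynomials and antidifferentiating $n$ times produces polynomials $f_m$ with $f_m\to f$ in $C^n([-M,M])$. The Hermite--Genocchi integral representation (the iterated form of \eqref{formuladivdiff}) then yields $f_m^{[n-1]}\to f^{[n-1]}$ uniformly on $[-M,M]^n$ and $f_m^{[n]}\to f^{[n]}$ uniformly on $[-M,M]^{n+1}$, hence $w^*$-convergence in the relevant $L^\infty$ spaces. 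By Proposition \ref{nintegrals}, each side of the identity for $f_m$ converges weakly in $\mathcal{S}^2(\mathcal{H})$ to the corresponding side for $f$, and the identity passes to the limit. The algebraic step is essentially forced by Lemma \ref{separation} together with the telescoping identity for monomials, so I do not expect trouble there; the delicate point is arranging the \emph{joint} uniform convergence of $f_m^{[n-1]}$ and $f_m^{[n]}$, which is why approximation in $C^n$ rather than merely in $C^0$ is essential.
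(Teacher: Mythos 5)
Your proof is correct, and its algebraic core coincides with the paper's: both reduce to elementary tensors, factor the right-hand side through Lemma \ref{separation}, and telescope the middle factor $\bigl[\Gamma^{B,A}(g_i^{[1]})\bigr](B-A)=g_i(B)-g_i(A)$ (you verify this directly for monomials via $\sum_j B^j(B-A)A^{k-1-j}=B^k-A^k$, whereas the paper invokes the Birman--Solomyak representation \eqref{f(A)-f(B)} for general $C^1$ factors; both are fine). Where you genuinely diverge is the approximation step. The paper works at the level of the $n$-variable symbol: it proves the intermediate identity $\Gamma_B(\phi)-\Gamma_A(\phi)=\Gamma_{B,A}(\phi_i^{[1]})$ for \emph{every} $\phi\in C^1(\mathbb{R}^n)$, by uniformly approximating $\partial_i\phi$ by polynomials, antidifferentiating in the $i$-th variable, and adding a correction term $w$ so that both $P_m+w_m\to\phi$ and $(P_m+w_m)_i^{[1]}\to\phi_i^{[1]}$ uniformly; it then specializes to $\phi=f^{[n-1]}$ via $(f^{[n-1]})_i^{[1]}=f^{[n]}$. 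You instead approximate the one-variable function $f$ itself in $C^n([-M,M])$ and use the Hermite--Genocchi representation to transfer this to simultaneous uniform convergence of $f_m^{[n-1]}$ and $f_m^{[n]}$; your term-by-term compatibility of the two tensor-monomial expansions is exactly the identity $(f^{[n-1]})_i^{[1]}=f^{[n]}$ checked on monomials. Your route is arguably more direct for the statement as given and avoids the somewhat fiddly $P_m+w_m$ construction; the paper's route buys a strictly more general perturbation formula valid for arbitrary $C^1$ symbols, not only divided differences, which is a reusable intermediate result. One small point common to both write-ups: the three-factor application of Lemma \ref{separation} needs the trivial adjustment when $i=1$ or $i=n$ (one of the outer blocks is empty), which you should mention but which causes no difficulty.
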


\begin{proof}
It will be convenient to extend the definition of the divided difference as follows. Let $m\in \mathbb{N}$ and $1 \leq i \leq m$. For any $\phi\in C^1(\mathbb{R}^m)$, we define
a function $\phi^{[1]}_i\colon \mathbb{R}^{m+1} \to \mathbb{C}$ by
$$\phi^{[1]}_i(x_0, \ldots, x_m) = \int_0^1 \partial_i \phi(x_0, \ldots, x_{i-2}, t x_{i-1} + (1-t)x_i, x_{i+1}, \ldots, x_m)\,\text{d}t$$
for all $(x_0, \ldots, x_m) \in \mathbb{R}^{m+1}$. The index $i$ in the notation $\phi^{[1]}_i$ refers to the $i$-th variable derivation $\partial_i$. It follows from $(\ref{formuladivdiff})$ that for any $f\in C^n(\mathbb{R})$,
\begin{equation}\label{nextrank}
(f^{[n-1]})_i^{[1]} = f^{[n]}.
\end{equation}

\noindent For $\phi \in C(\mathbb{R}^n)$, write
$$\Gamma_A(\phi) =  \left[\Gamma^{A_1, \ldots A_{i-1}, A, A_i, \ldots, A_{n-1}}(\phi)\right](K_1, \ldots, K_{n-1})$$
and
$$\Gamma_B(\phi) =  \left[ \Gamma^{A_1, \ldots A_{i-1}, B, A_i, \ldots, A_{n-1}}(\phi)\right](K_1, \ldots, K_{n-1}).$$
For $\psi\in C(\mathbb{R}^{n+1})$, write
$$\Gamma_{B,A}(\psi) = \left[\Gamma^{A_1, \ldots, A_{i-1}, B, A, A_i, \ldots, A_{n-1}}(\psi)\right](K_1, \ldots, K_{i-1}, B-A, K_i, \ldots, K_{n-1}) .$$
We will show that for any $\phi \in C^1(\mathbb{R}^n)$,
\begin{equation}\label{intermediate}
\Gamma_B(\phi) - \Gamma_A(\phi) = \Gamma_{B,A}\left( \phi_i^{[1]} \right).
\end{equation}
Then the result follows by applying this formula to $\phi = f^{[n-1]}$, together with $(\ref{nextrank})$.

Assume first that $\phi=u_1\otimes \cdots \otimes u_n$ for functions $u_j \in C^1(\mathbb{R})$, i.e., $\phi(t_1,\ldots,t_n)=u_1(t_1)\ldots u_n(t_n)$ for every $(t_1,\ldots,t_n) \in \mathbb{R}^n$. Then
$$\partial_i \phi = u_1 \otimes \cdots \otimes u_{i-1} \otimes u_i' \otimes u_{i+1} \otimes \cdots \otimes u_n.$$
Hence,
$$\phi_i^{[1]} = u_1 \otimes \cdots \otimes u_{i-1} \otimes u_i^{[1]} \otimes u_{i+1} \otimes \cdots \otimes u_n.$$
By Lemma $\ref{separation}$ we have
\begin{align*}
& \Gamma_{B,A}(\phi_i^{[1]}) \\
& = \left[\Gamma^{A_1, \ldots, A_{i-1}, B}(u_1 \otimes \cdots \otimes u_{i-1} \otimes 1) \right](K_1, \ldots, K_{i-1}) \left[\Gamma^{B,A, A_i}(u_i^{[1]} \otimes 1)\right](B-A, K_i) \\
& \ \ \ \ \left[\Gamma^{A_i, \ldots, A_{n-1}}(u_{i+1} \otimes \cdots \otimes u_n)\right](K_{i+1}, \ldots,K_{n-1}).
\end{align*}
By \eqref{MOItensor} and the representation
\begin{equation}\label{f(A)-f(B)}
f(A+K)-f(A) = \bigl[\Gamma^{A+K,A}(f^{[1]})\bigr](K).
\end{equation} (established in \cite[Corollary to Theorem 4.5]{BS1}),

\begin{align*}
\left[\Gamma^{B,A, A_i}(u_i^{[1]} \otimes 1)\right](B-A, K_i)
& = \left[\Gamma^{B,A}(u_i^{[1]})\right](B-A) K_i \\
& = (u_i(B) - u_i(A))K_i.
\end{align*}
Hence,
\begin{align*}
\Gamma_{B,A}(\phi_i^{[1]})
& = u_1(A_1)K_1 \ldots u_{i-1}(A_{i-1}) K_{i-1} (u_i(B) - u_i(A))K_i  u_{i+1}(A_i)K_{i+1} \ldots u_n(A_{n-1}) \\
& = u_1(A_1)K_1 \ldots u_{i-1}(A_{i-1}) K_{i-1} u_i(B) K_i  u_{i+1}(A_i)K_{i+1} \ldots u_n(A_{n-1}) \\
& \ \ \ - u_1(A_1)K_1 \ldots u_{i-1}(A_{i-1}) K_{i-1} u_i(A) K_i  u_{i+1}(A_i)K_{i+1} \ldots u_n(A_{n-1}) \\
& = \Gamma_B(\phi) - \Gamma_A(\phi).
\end{align*}
This shows $(\ref{intermediate})$ in the case when $\phi=u_1\otimes \cdots \otimes u_n$. By linearity this immediately implies that $(\ref{intermediate})$ holds true whenever $\phi \in C^1(\mathbb{R}) \otimes \cdots \otimes C^1(\mathbb{R}).$ Note that this space contains the $n$-variable polynomial functions.

Now consider an arbitrary $\phi \in C^1(\mathbb{R}^n)$. Let $M > 0$ be a constant such that the spectra of $A_1, \ldots, A_{n-1}, A$ and $B$ are included in $[-M, M]$. By continuity of $\partial_i \phi$ there exists a sequence $(Q_m)_{m\geq 1}$ of $n$-variable polynomial functions such that $Q_m \rightarrow \partial_i \phi$ uniformly on $[-M,M]^n$. For any $m\geq 1$, we set
$$P_m(t_1, \ldots, t_n) = \int_0^{t_i} Q_m(t_1, \ldots, t_{i-1}, \theta, t_{i+1}, \ldots, t_n)\,\text{d}\theta$$
for all $(t_1, \ldots,t_n) \in \mathbb{R}^n$. This is also an $n$-variable polynomial function. Next we introduce $w(t_1, \ldots, t_{i-1}, t_{i+1}, \ldots, t_n) = \phi(t_1, \ldots, t_{i-1}, 0, t_{i+1}, \ldots, t_n)$. The function $w$ belongs to $C^1(\mathbb{R}^{n-1})$ and for any real numbers $t_1, \ldots, t_n$, we have
$$\phi(t_1, \ldots, t_n) = w(t_1, \ldots, t_{i-1}, t_{i+1}, \ldots, t_n) + \int_0^{t_i} \partial_i \phi (t_1, \ldots, t_{i-1}, \theta, t_{i+1}, \ldots, t_n)\,\text{d}\theta.$$
Hence,
\begin{align*}
& |\phi(t_1, \ldots, t_n) - w(t_1, \ldots, t_{i-1}, t_{i+1}, \ldots, t_n) - P_n(t_1, \ldots, t_n)| \\
& \leq \int_0^{t_i} |\partial_i \phi (t_1, \ldots, t_{i-1}, \theta, t_{i+1}, \ldots, t_n) - Q_m(t_1, \ldots, t_{i-1}, \theta, t_{i+1}, \ldots, t_n) | \,\text{d}\theta.
\end{align*}
Consequently, $P_m + w \rightarrow \phi$ uniformly on $[-M, M]^n$, where we naturally see $w$ as an element of $C^1(\mathbb{R}^n)$. Let $(w_m)_{m\in \mathbb{N}}$ be a sequence of $(n-1)$-variable polynomial functions converging uniformly to $w$ on $[-M,M]^{n-1}$ and see $w_m$ as an element of $C^1(\mathbb{R}^n)$. The latter implies that $P_m + w_m \rightarrow \phi$ uniformly on $[-M, M]^n$. By construction, $\partial_i P_m = Q_m$ and $\partial_i w_m = 0$ hence we also obtain that $(P_m + w_m)_i^{[1]} \rightarrow \phi_i^{[1]}$ uniformly on $[-M, M]^{n+1}$. Since $P_m + w_m$ belongs to $C^1(\mathbb{R}) \otimes \cdots \otimes C^1(\mathbb{R})$, it satisfies $(\ref{intermediate})$. The above approximation property and Proposition \ref{nintegrals} imply that $\phi$ satisfies $(\ref{intermediate})$ as well.
\end{proof}

The following corollary is an extension of Lemma $\ref{simplification}$ to unbounded operators.

\begin{corollary}\label{simplificationUNB}
Let $n \geq 2$ be an integer. Let $A_1, \ldots, A_{n-1}, A, K$ be selfadjoint operators densely defined in $\mathcal{H}$ and assume that $K \in \mathcal{S}^2(\mathcal{H})$. Let $f\in C^n(\mathbb{R})$ be such that $f^{(n-1)}$ and $f^{(n)}$ are bounded. Then, for all $K_1, \ldots, K_{n-1} \in \mathcal{S}^2(\mathcal{H})$ and for any $1 \leq i \leq n$ we have
\begin{align*}
& \ \ \ \ \ \ \ \ \ \ \ \ \ \left[ \Gamma^{A_1, \ldots A_{i-1}, A+K, A_i, \ldots, A_{n-1}}(f^{[n-1]})\right] (K_1, \ldots, K_{n-1}) \\
& \ \ \ \ \ \ \ \ \ \ \ \ \ - \left[ \Gamma^{A_1, \ldots A_{i-1}, A, A_i, \ldots, A_{n-1}}(f^{[n-1]})\right] (K_1, \ldots, K_{n-1}) \\
& = \left[ \Gamma^{A_1, \ldots, A_{i-1}, A + K, A, A_i, \ldots, A_{n-1}}(f^{[n]})\right](K_1, \ldots, K_{i-1}, K, K_i, \ldots K_{n-1}).
\end{align*}
\end{corollary}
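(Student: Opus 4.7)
The plan is to reduce to the bounded case handled by Lemma~\ref{simplification} via a strong resolvent approximation, and then pass to the limit using the $\mathcal{S}^2$-norm continuity of multiple operator integrals established in Proposition~\ref{Approx1}. By Lemma~\ref{existence}, for each $\ell \in \{1, \ldots, n-1\}$ and for the operator $A$, we can produce sequences $(A_\ell^j)_{j \geq 1}$ and $(A^j)_{j \geq 1}$ of bounded selfadjoint operators converging resolvent strongly to $A_\ell$ and $A$ respectively. Since $K$ is bounded (being Hilbert--Schmidt and selfadjoint), Lemma~\ref{RC} applied to $(A^j)_{j\ge 1}$ yields that $(A^j + K)_{j \geq 1}$ converges resolvent strongly to $A + K$.

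For each fixed $j$, I would apply Lemma~\ref{simplification} to the bounded selfadjoint operators $A_1^j, \ldots, A_{n-1}^j, A^j, A^j+K$, noting that $(A^j+K) - A^j = K \in \mathcal{S}^2(\mathcal{H})$, to obtain the identity
\begin{align*}
&\left[\Gamma^{A_1^j, \ldots, A_{i-1}^j, A^j + K, A_i^j, \ldots, A_{n-1}^j}(f^{[n-1]})\right](K_1, \ldots, K_{n-1}) \\
&\qquad - \left[\Gamma^{A_1^j, \ldots, A_{i-1}^j, A^j, A_i^j, \ldots, A_{n-1}^j}(f^{[n-1]})\right](K_1, \ldots, K_{n-1}) \\
&= \left[\Gamma^{A_1^j, \ldots, A_{i-1}^j, A^j + K, A^j, A_i^j, \ldots, A_{n-1}^j}(f^{[n]})\right](K_1, \ldots, K_{i-1}, K, K_i, \ldots, K_{n-1}).
\end{align*}

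It then remains to pass to the limit $j \to \infty$. The hypotheses that $f^{(n-1)}$ and $f^{(n)}$ are bounded ensure, by the remark following the definition of $f^{[n]}$, that $f^{[n-1]} \in C_b(\mathbb{R}^n)$ and $f^{[n]} \in C_b(\mathbb{R}^{n+1})$. Combined with the strong resolvent convergences assembled above, Proposition~\ref{Approx1} therefore applies to each of the three multiple operator integrals, and each converges in the $\mathcal{S}^2$-norm to the analogous multiple operator integral built from the unbounded operators $A_1, \ldots, A_{n-1}, A, A+K$. Taking the limit in both sides of the displayed identity then delivers the corollary.

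The only real point to verify is that every operator tuple occurring on the two sides truly falls within the scope of Proposition~\ref{Approx1}; the crucial step there is handling the tuple containing both $A+K$ and $A$ on the right-hand side, which is precisely why Lemma~\ref{RC} was needed in tandem with Lemma~\ref{existence}. I do not anticipate any further obstacle, as the boundedness assumptions on $f^{(n-1)}$ and $f^{(n)}$ are tailored exactly so that $f^{[n-1]}$ and $f^{[n]}$ are admissible symbols for Proposition~\ref{Approx1}.
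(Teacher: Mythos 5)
Your proposal is correct and follows essentially the same route as the paper's proof: approximate $A_1,\ldots,A_{n-1},A$ by bounded selfadjoint operators via Lemma~\ref{existence}, apply Lemma~\ref{simplification} for each $j$, invoke Lemma~\ref{RC} to get strong resolvent convergence of $A^j+K$ to $A+K$, and pass to the limit with Proposition~\ref{Approx1} using that the boundedness of $f^{(n-1)}$ and $f^{(n)}$ makes $f^{[n-1]}$ and $f^{[n]}$ bounded continuous symbols.
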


\begin{proof}
For all $1\leq k \leq n-1$, let $(A_k^j)_{j\in \mathbb{N}}$ be a sequence of bounded selfadjoint operators on $\mathcal{H}$ converging resolvent strongly to $A_k$. Such sequence exists by Lemma $\ref{existence}$. Similarly, let $(A^j)_{j\in \mathbb{N}}$ be a sequence of bounded selfadjoint operators converging resolvent strongly to $A$. According to Lemma $\ref{simplification}$, we have, for all $j$,
\begin{align*}
& \ \ \ \ \ \ \ \ \ \ \ \ \ \left[ \Gamma^{A_1^j, \ldots A_{i-1}^j, A^j + K, A_i^j, \ldots, A_{n-1}^j}(f^{[n-1]})\right] (K_1, \ldots, K_{n-1}) \\
& \ \ \ \ \ \ \ \ \ \ \ \ \ - \left[ \Gamma^{A_1^j, \ldots A_{i-1}^j, A^j, A_i^j, \ldots, A_{n-1}^j}(f^{[n-1]})\right] (K_1, \ldots, K_{n-1}) \\
& = \left[ \Gamma^{A_1^j, \ldots, A_{i-1}^j, A^j + K, A^j, A_i^j, \ldots, A_{n-1}^j}(f^{[n]})\right](K_1, \ldots, K_{i-1}, K, K_i, \ldots K_{n-1}).
\end{align*}
By Lemma $\ref{RC}$, $A^j + K \to A + K$ resolvent strongly when $j \to \infty$. Moreover, the boundedness of $f^{(n-1)}$ and $f^{(n)}$ imply that of $f^{[n-1]}$ and $f^{[n]}$. We obtain the desired equality by passing to the limit in the above equality and applying Proposition $\ref{Approx1}$.
\end{proof}

The proof of the first main result is given below.

\begin{proof}[Proof of Theorem $\ref{Formula}$] Assume first that $A$ is bounded.

$(i)$ We prove the first claim by induction on $k$, $1 \leq k \leq n$. Let $k=1$ and $t\in \mathbb{R}$. We want to show that the limit
$$\underset{s \rightarrow 0}{\lim} \ \dfrac{\varphi(t+s) - \varphi(t)}{s}$$
exists in $\mathcal{S}^2(\mathcal{H})$ and equals $\left[ \Gamma^{A + tK, A + tK}(f^{[1]}) \right](K)$.
By $(\ref{f(A)-f(B)})$, we have
\begin{align*}
\dfrac{\varphi(t+s) - \varphi(t)}{s}
& = \dfrac{f(A + (t+s)K) - f(A + tK)}{s} \\
& = \left[\Gamma^{A+(t+s)K, A+tK}(f^{[1]})\right](K).
\end{align*}
By Lemma $\ref{RC}$, $A+(t+s)K \to A +tK$ resolvent strongly as $s \to 0$. By assumption $A$ and $K$ are bounded, so there exists a bounded interval $I \subset \mathbb{R}$ such that for $s$ small enough, $\sigma(A + (t+s)K) \subset I$. Since $f \in C^1(\mathbb{R})$, $f^{[1]}$ is continuous and, hence, bounded on $I \times I$. Let $F \in C_b(\mathbb{R}^2)$ be such that $F_{|I\times I}=f^{[1]}$. By Proposition $\ref{Approx1}$ applied to $F$ we get
$$\underset{s \to 0}{\lim} \ \left[\Gamma^{A+(t+s)K, A+tK}(f^{[1]})\right](K) = \left[\Gamma^{A + tK, A+tK}(f^{[1]})\right](K) \ \ \text{in} \ \ \mathcal{S}^2(\mathcal{H}).$$
This shows that $\varphi'(t) =  \left[ \Gamma^{A + tK, A + tK}(f^{[1]}) \right](K)$.

Since $A + tK \to A + t_0K$ in $\mathcal{B}(\mathcal{H})$ and, hence, in the strong resolvent sense as $t \to t_0$, we obtain $\varphi'(t) \to \varphi'(t_0)$ in $\mathcal{S}^2(\mathcal{H})$ as $t \to t_0$ by Proposition $\ref{Approx1}$. This confirms continuity of $\varphi'$.

Now let $1 \leq k \leq n-1$ and assume that $\varphi \in C^k(\mathbb{R})$ and for all $1 \leq j \leq k$ and $t\in \mathbb{R}$,
\begin{equation}
\varphi^{(j)}(t) = j! \left[ \Gamma^{A+tK, A+tK, \ldots, A+tK}(f^{[j]}) \right] (K, \ldots, K).
\end{equation}
We want to prove that $\varphi \in C^{k+1}(\mathbb{R})$ with a derivative of $(k+1)$-th order given by $(\ref{diffenrential})$. Let $s,t \in \mathbb{R}$. We have
\begin{align*}
& \dfrac{\varphi^{(k)}(t+s) - \varphi^{(k)}(t)}{s} \\
& = \dfrac{k!}{s} \left[ \Gamma^{A+(t+s)K,\ldots, A+(t+s)K}(f^{[k]}) - \Gamma^{A+tK,\ldots, A+tK}(f^{[k]}) \right](K, \ldots, K) \\
& = \dfrac{k!}{s} \sum_{i=1}^{k+1} \left[ \Gamma^{(A+tK)^{i-1}, (A+(t+s)K)^{k-i+2}}(f^{[k]}) - \Gamma^{(A+tK)^i, (A+(t+s)K)^{k-i+1}}(f^{[k]}) \right](K, \ldots, K)
\end{align*}
where for instance, $(A + tK)^i$ stands for $A+tK, \ldots, A+tK$ (i terms).
By Lemma $\ref{simplification}$, we have for all $1\leq i \leq k+1$,
\begin{align*}
& \dfrac{1}{s} \left[ \Gamma^{(A+tK)^{i-1}, (A+(t+s)K)^{k-i+2}}(f^{[k]}) - \Gamma^{(A+tK)^i, (A+(t+s)K)^{k-i+1}}(f^{[k]}) \right](K, \ldots, K) \\
& = \left[ \Gamma^{(A+tK)^{i-1}, A+(t+s)K, A+tK, (A+(t+s)K)^{k-i+1}}(f^{[k+1]}) \right](K, \ldots, K).
\end{align*}
Moreover, using strong resolvent convergence like in the first part of the proof, we can see that this term converges in $\mathcal{S}^2(\mathcal{H})$, as $s\rightarrow 0$, to
$$\left[ \Gamma^{A+tK,\ldots, A+tK}(f^{[k+1]}) \right](K, \ldots, K).$$
Hence,
\begin{align*}
\underset{s \rightarrow 0}{\lim} \ \dfrac{\varphi^{(k)}(t+s) - \varphi^{(k)}(t)}{s}
& = k! \sum_{i=1}^{k+1} \left[ \Gamma^{A+tK,\ldots, A+tK}(f^{[k+1]}) \right](K, \ldots, K) \\
& = (k+1)! \left[ \Gamma^{A+tK,\ldots, A+tK}(f^{[k+1]}) \right](K, \ldots, K).
\end{align*}
Finally, the continuity of $\varphi^{(k+1)}$ follows by the same argument as the continuity of $\varphi'$. This concludes the proof of $(i)$.

$(ii)$. We will prove the second claim by induction on $n$. The case $n=1$ follows from $(\ref{f(A)-f(B)})$. Now let $n\in \mathbb{N}$ and $f\in C^{n+1}(\mathbb{R})$. Assume that we have
$$f(A+K)-f(A) - \sum_{k=1}^{n-1} \dfrac{1}{k!} \varphi^{(k)}(0) = \left[ \Gamma^{A+K, A, \ldots, A}(f^{[n]}) \right] (K, \ldots, K).$$
We have
\begin{align*}
f(A+K)-f(A) - \sum_{k=1}^{n} \dfrac{1}{k!} \varphi^{(k)}(0)
& = f(A+K)-f(A) - \sum_{k=1}^{n-1} \dfrac{1}{k!} \varphi^{(k)}(0) - \dfrac{1}{n!} \varphi^{(n)}(0) \\
& = \left[ \Gamma^{A+K, A, \ldots, A}(f^{[n]}) \right] (K, \ldots, K) - \dfrac{1}{n!} \varphi^{(n)}(0).
\end{align*}
By $(i)$, we have
$$\dfrac{1}{n!} \varphi^{(n)}(0) = \left[ \Gamma^{A, A, \ldots, A}(f^{[n]}) \right] (K, \ldots, K).$$
Using Lemma $\ref{simplification}$, we obtain
$$f(A+K)-f(A) - \sum_{k=1}^{n} \dfrac{1}{k!} \varphi^{(k)}(0) = \left[ \Gamma^{A+K, A, \ldots, A}(f^{[n+1]}) \right] (K, \ldots, K)$$
which is the desired equality.

Assume now that $A$ is unbounded and that for all $1\leq i \leq n$, $f^{(i)}$ is bounded. Then, for all $1\leq i \leq n, f^{[i]}$ is bounded. Hence, applying Corollary $\ref{simplificationUNB}$ instead of Lemma $\ref{simplification}$ and following the same lines as in the proof of the bounded case, we obtain the unbounded case.
\end{proof}

Theorem $\ref{Formula}$, Proposition $\ref{Approx1}$ and Lemma $\ref{RC}$ have the following consequence.

\begin{corollary}\label{CoroTaylor}
Let $A$ be a selfadjoint operator densely defined in $\mathcal{H}$ and let $(A_j)_{j\in \mathbb{N}}$ be a sequence of selfadjoint operators in $\mathcal{B}(\mathcal{H})$ converging resolvent strongly to $A$. Let $n\in\N$ and let $f\in C^n(\mathbb{R})$ be such that $f^{(n)}$ is bounded. Let $K = K^* \in \mathcal{S}^2(\mathcal{H})$ and define, for every $j\in\N$,
$$\varphi_j : t\in \mathbb{R} \mapsto f(A_j +tK) - f(A_j) \in \mathcal{S}^2(\mathcal{H}).$$
Then, for every $t \in \mathbb{R}$,
$$\underset{j\to \infty}{\lim} \ \dfrac{\varphi_j^{(n)}(t)}{n!} = \left[\Gamma^{A+tK, \ldots, A+tK}(f^{[n]}) \right](K, \ldots, K)$$
and
$$\underset{j\to \infty}{\lim} \left( f(A_j+K)-f(A_j) - \sum_{k=1}^{n-1} \dfrac{1}{k!} \varphi_j^{(k)}(0) \right) = \left[ \Gamma^{A+K, A, \ldots, A}(f^{[n]}) \right] (K, \ldots, K),$$
where the limits are taken in $\mathcal{S}^2(\mathcal{H})$.
\end{corollary}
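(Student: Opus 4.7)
The plan is to view both limits as direct consequences of the three results assembled just before the statement: Theorem \ref{Formula}, Lemma \ref{RC}, and the approximation Proposition \ref{Approx1}. The crucial structural observation is that each $A_j$ is bounded, so the ``bounded operator'' branch of the hypothesis in Theorem \ref{Formula} applies to $A_j$ with no boundedness assumption needed on the derivatives of $f$, while the bound on $f^{(n)}$ is exactly what one needs to feed $f^{[n]}$ into Proposition \ref{Approx1}.

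For the first equality, I would start by applying Theorem \ref{Formula}(i) with the bounded operator $A_j$ in place of $A$. This produces the identity
\[
\tfrac{1}{n!}\varphi_j^{(n)}(t) = \bigl[\Gamma^{A_j+tK,\ldots,A_j+tK}(f^{[n]})\bigr](K,\ldots,K)
\]
for every $j\in\mathbb{N}$ and every $t\in\mathbb{R}$. Next, since $A_j\to A$ resolvent strongly and $tK$ is a bounded selfadjoint operator, Lemma \ref{RC} gives $A_j+tK\to A+tK$ resolvent strongly. The boundedness of $f^{(n)}$ forces $f^{[n]}$ to lie in $C_b(\mathbb{R}^{n+1})$ (this is recorded right after the definition of the divided difference), so Proposition \ref{Approx1}, applied with all $n+1$ operator arguments equal to $A_j+tK$, yields $\mathcal{S}^2(\mathcal{H})$-convergence to $\bigl[\Gamma^{A+tK,\ldots,A+tK}(f^{[n]})\bigr](K,\ldots,K)$, which is the claim.

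For the second equality, I would invoke Theorem \ref{Formula}(ii) with $A_j$ in place of $A$; since $A_j$ is bounded, this gives the Taylor remainder identity
\[
f(A_j+K)-f(A_j)-\sum_{k=1}^{n-1}\tfrac{1}{k!}\varphi_j^{(k)}(0)=\bigl[\Gamma^{A_j+K,A_j,\ldots,A_j}(f^{[n]})\bigr](K,\ldots,K).
\]
Then Lemma \ref{RC} again produces $A_j+K\to A+K$ in the resolvent strong sense, while of course $A_j\to A$ in the same sense in the other $n-1$ slots. As before, $f^{[n]}\in C_b(\mathbb{R}^{n+1})$, so Proposition \ref{Approx1} delivers convergence of the right-hand side in $\mathcal{S}^2(\mathcal{H})$ to $\bigl[\Gamma^{A+K,A,\ldots,A}(f^{[n]})\bigr](K,\ldots,K)$, as desired.

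There is no genuine obstacle here; the proof is essentially a matching exercise to verify that the hypotheses of the three ingredient results are satisfied. The only point requiring a little care is ensuring that one uses the ``$A$ bounded'' clause of Theorem \ref{Formula} at the level of each $A_j$, so that no boundedness of $f',\ldots,f^{(n-1)}$ must be assumed, and the only hypothesis imposed on $f$ in the corollary is the one needed for the continuous-bounded symbol in Proposition \ref{Approx1}, namely the boundedness of $f^{(n)}$.
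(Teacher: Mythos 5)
Your proposal is correct and follows exactly the route the paper intends: the paper gives no separate proof, stating only that the corollary is a consequence of Theorem \ref{Formula}, Proposition \ref{Approx1} and Lemma \ref{RC}, and your argument is precisely the assembly of these three ingredients, including the key point that the boundedness of each $A_j$ activates the ``$A$ bounded'' branch of Theorem \ref{Formula} while the boundedness of $f^{(n)}$ puts $f^{[n]}$ in $C_b(\mathbb{R}^{n+1})$ as required by Proposition \ref{Approx1}.
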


\section{An extension of the Koplienko trace formula}\label{traceform}

In this section, we prove our second main theorem and demonstrate two important classes of functions that satisfy this result.

\begin{theorem}\label{ThmA}
Let $f\in C^2(\R)$ be such that $f''$ is bounded. Assume that there exist a separable Hilbert space $H$ and two bounded Borel functions $a, b : \mathbb{R}^2 \rightarrow H$ such that
\begin{equation}\label{Hypf}
f^{[2]}(s,t,u) = \left\langle a(s,t), b(t,u) \right\rangle,\quad \forall (s,t,u) \in \mathbb{R}^3.
\end{equation}
Let $A,K$ be selfadjoint operators densely defined in $\mathcal{H}$ with $K \in \mathcal{S}^2(\mathcal{H})$. Let $\eta$ be the Koplienko spectral shift function associated to $(A,K)$. Then,
\begin{equation}\label{traceformint}
\tr\left(\left[\Gamma^{A+K,A,A}(f^{[2]})\right](K,K)\right) = \int_{\mathbb{R}} f''(t) \eta(t) \, dt.
\end{equation}
If, in addition, either $f'$ is bounded or $A$ is bounded, then
\begin{equation*}
\tr\Big(f(A+K) - f(A) - \dfrac{d}{dt}f(A+tK)_{|t=0}\Big) = \int_{\mathbb{R}} f''(t) \eta(t) \, dt,
\end{equation*}
where $\dfrac{d}{dt}f(A+tK)_{|t=0}\overset{\text{def}}{=}\mathcal{S}^2(\mathcal{H})\text{-}\lim\limits_{t\rightarrow 0^+}\frac{f(A+tK)-f(A)}{t}=\bigl[\Gamma^{A,A}(f^{[1]})\bigr](K)$.
\end{theorem}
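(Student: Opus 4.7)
My plan is to establish the first identity \eqref{traceformint} by an approximation argument, and to deduce the second identity from it. Under either extra assumption of the second identity, Theorem \ref{Formula}(ii) applied with $n=2$ gives
$$f(A+K)-f(A)-\tfrac{d}{dt}f(A+tK)_{\vert t=0} = \bigl[\Gamma^{A+K,A,A}(f^{[2]})\bigr](K,K)$$
in $\Sc^2(\mathcal{H})$, with the derivative evaluated in the $\Sc^2$-norm; taking the trace of both sides and invoking \eqref{traceformint} then finishes the second identity.

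To prove \eqref{traceformint} I would first subtract the affine part of $f$ at the origin to arrange $f(0)=f'(0)=0$, which does not change either side of \eqref{traceformint} and yields $|f(x)|\le\tfrac{1}{2}\|f''\|_\infty x^2$ and $|f'(x)|\le\|f''\|_\infty|x|$. Fix $\chi\in C^\infty_c(\mathbb{R})$ equal to $1$ on $[-1,1]$ and a standard mollifier $\rho_n$, and set $f_n=\bigl(\chi(\cdot/n)f\bigr)*\rho_n\in C^\infty_c(\mathbb{R})$. Expanding the second derivative of $\chi(\cdot/n)f$ and using the quadratic growth bound on $f$ gives $\|f_n''\|_\infty\le C\|f''\|_\infty$ uniformly in $n$, while a direct computation yields $f_n''(x)\to f''(x)$ for every $x\in\mathbb{R}$. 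Since $f_n\in B^1_{\infty 1}(\mathbb{R})\cap B^2_{\infty 1}(\mathbb{R})$, Peller's extension \cite{Peller2005} of the Koplienko trace formula combined with Theorem \ref{Formula}(ii) (applicable because $f_n'$ and $f_n''$ are bounded) yields
$$\tr\bigl(\bigl[\Gamma^{A+K,A,A}(f_n^{[2]})\bigr](K,K)\bigr) = \int_{\mathbb{R}} f_n''\,\eta$$
for every $n\in\mathbb{N}$, and by Proposition \ref{Besovfacto}, $f_n^{[2]}$ admits a Hilbert space factorization of the form \eqref{Hypf}, so that Proposition \ref{Simplf} applies to each $f_n$.

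To pass to the limit, set $\psi(s,t)=f^{[2]}(s,t,s)$ and $\psi_n(s,t)=f_n^{[2]}(s,t,s)$. Proposition \ref{Simplf} applied to $f$ and to each $f_n$ recasts the traces of the triple operator integrals as
$$\bigl\langle[\Gamma^{A+K,A}(\psi_n)](K),\,K^*\bigr\rangle_{\Sc^2} \qquad \hbox{and} \qquad \bigl\langle[\Gamma^{A+K,A}(\psi)](K),\,K^*\bigr\rangle_{\Sc^2}.$$
The uniform bound $\|\psi_n\|_\infty\le\tfrac{1}{2}\|f_n''\|_\infty$ together with the pointwise convergence $\psi_n\to\psi$ implies $w^*$-convergence of $\psi_n$ to $\psi$ in $L^\infty(\lambda_{A+K}\times\lambda_A)$; Proposition \ref{nintegrals} then gives weak $\Sc^2$-convergence of $[\Gamma^{A+K,A}(\psi_n)](K)$ to $[\Gamma^{A+K,A}(\psi)](K)$, whence the traces converge. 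On the right-hand side, dominated convergence against $\eta\in L^1(\mathbb{R})$, with the uniform bound on $\|f_n''\|_\infty$, yields $\int f_n''\eta\to\int f''\eta$. Passing to the limit in the displayed formula then proves \eqref{traceformint}. The main obstacle is constructing an approximating sequence $(f_n)$ that simultaneously sits in Peller's class, keeps $\|f_n''\|_\infty$ uniformly bounded (so that dominated convergence applies on both sides), and satisfies $f_n''\to f''$ pointwise; the quadratic growth bound from the normalisation $f(0)=f'(0)=0$ is precisely what makes the simple truncation-plus-mollification construction work.
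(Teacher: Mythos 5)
Your overall architecture (reduce the second identity to the first via Theorem \ref{Formula}(ii); prove the first by approximating $f$ with compactly supported smooth functions, invoking Peller's formula for the approximants, and passing to the limit through a weakly continuous pairing) is a reasonable alternative to the paper's route, which instead constructs a complex measure $\gamma$ with $\tr(\Gamma(A,K))=\int f''\,d\gamma$ and identifies $\gamma$ with $\eta\,dt$ by testing against rational functions. However, there is a genuine gap at the crux of your limit passage: you apply Proposition \ref{Simplf} to $\Gamma^{A+K,A,A}(f^{[2]})$ and $\Gamma^{A+K,A,A}(f_n^{[2]})$, but that proposition only treats triple operator integrals of the form $\Gamma^{A,B,A}$, with \emph{equal} first and third operators; its proof hinges on Lemma \ref{auxiliary1}, whose trace identity uses cyclicity to merge the first and third spectral variables. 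For $\Gamma^{A+K,A,A}$ the claimed reduction is simply false: taking $\phi=u_1\otimes u_2\otimes u_3$, one has
\begin{equation*}
\tr\bigl(\bigl[\Gamma^{A+K,A,A}(\phi)\bigr](K,K)\bigr)=\tr\bigl(u_3(A)u_1(A+K)Ku_2(A)K\bigr),
\end{equation*}
whereas with $\psi(s,t)=\phi(s,t,s)$ one gets
\begin{equation*}
\tr\bigl(\bigl[\Gamma^{A+K,A}(\psi)\bigr](K)K\bigr)=\tr\bigl(u_1(A+K)u_3(A+K)Ku_2(A)K\bigr),
\end{equation*}
and these differ in general since $u_3(A)\neq u_3(A+K)$.

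This is not a cosmetic issue: without the reduction to a double operator integral, the quantity you must control is the trace of a triple operator integral, and $w^*$-convergence of the symbols $f_n^{[2]}\to f^{[2]}$ only yields \emph{weak} $\mathcal{S}^2$-convergence of $[\Gamma^{A+K,A,A}(f_n^{[2]})](K,K)$, which does not imply convergence of traces (rank-one projections tend weakly to $0$ in $\mathcal{S}^2$ while their traces stay equal to $1$). The paper's Lemma \ref{rir} is exactly the missing ingredient: it rewrites $\tr(\Gamma(A,K))$ as $2\int_0^1(1-t)\,\tr\bigl(\bigl[\Gamma^{A_t,A_t,A_t}(f^{[2]})\bigr](K,K)\bigr)\,dt$ with $A_t=A+tK$, and each integrand has the form $\Gamma^{A,B,A}$ to which Proposition \ref{Simplf} genuinely applies, turning the trace into the $\mathcal{S}^2$-pairing $\tr([\Gamma^{A_t,A_t}(\psi)](K)K)$ that \emph{is} continuous under $w^*$-convergence of symbols. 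If you insert this step (and a dominated convergence argument in $t$), your approximation scheme — whose ingredients (the normalization $f(0)=f'(0)=0$, the uniform bound $\|f_n''\|_\infty\le C\|f''\|_\infty$, the pointwise convergence $f_n''\to f''$, and hence the $w^*$-convergence $\psi_n\to\psi$) are all sound — should go through; as written, it does not.
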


For simplicity, we denote
\begin{equation*}
\Gamma(A,K) \overset{\text{def}}{=} \left[\Gamma^{A+K,A,A}(f^{[2]})\right](K,K).
\end{equation*}
By Theorem \ref{mainS1}, $\Gamma(A,K)\in\mathcal{S}^1(\mathcal{H})$ and by \eqref{normineqS1} of  Proposition \ref{new},
\begin{equation*}
\| \Gamma(A,K) \|_1 \leq \|a\|_{\infty} \|b\|_{\infty} \|K\|_2^2.
\end{equation*}
By Theorem \ref{Formula},
\begin{equation*}
\label{GammainS1}
\Gamma(A,K) = f(A+K) - f(A) - \dfrac{d}{dt}f(A+tK)_{|t=0}
\end{equation*}
if either $f'$ is bounded or $A$ is bounded.

The proof of Theorem \ref{ThmA} is obtained in a technically more subtle way than the proof of its
first order counterpart \eqref{KreinsFormula} in \cite{Peller2016}. Our goal is
achieved with help of the following lemmas.

\begin{lemma}\label{rir}
Let $A,K$ be selfadjoint operators densely defined in $\mathcal{H}$ with $K \in
 \mathcal{S}^2(\mathcal{H})$. Let $f\in C^2(\R)$ be such that $f^{[2]}$ admits a factorization \eqref{Hypf} for some separable Hilbert space $H$ and some bounded Borel functions $a,b : \mathbb{R}^2 \rightarrow \mathbb{C}$. Then,
\begin{equation}
\label{rir0}
\Gamma(A,K)
=2\int_0^1(1-t)\,\big[\Gamma^{A+tK,A+tK,A+tK}(f^{[2]})\big](K,K)\,dt,
\end{equation}
where the latter is an $\Sc^2$-valued Bochner integral. Moreover,
\begin{equation}
\label{rir1}
\tr(\Gamma(A,K)) = 2\int_0^1(1-t)\, \tr \left( \big[\Gamma^{A+tK,A+tK,A+tK}(f^{[2]})\big](K,K) \right)\,dt.
\end{equation}
\end{lemma}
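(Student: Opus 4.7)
The plan is to prove \eqref{rir0} via Taylor's integral remainder formula in the Banach space $\Sc^2(\H)$ combined with a spectral-truncation approximation of $A$, and then to deduce \eqref{rir1} by upgrading the Bochner integral in \eqref{rir0} from $\Sc^2$-valued to $\Sc^1$-valued, thereby permitting interchange with the (non-$\Sc^2$-continuous) trace.

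For \eqref{rir0}, first suppose $A$ is bounded or $f'$ is bounded. Then Theorem \ref{Formula} yields $\varphi\in C^2(\R,\Sc^2(\H))$ with $\varphi''(t)=2[\Gamma^{A+tK,A+tK,A+tK}(f^{[2]})](K,K)$ and $\Gamma(A,K)=\varphi(1)-\varphi(0)-\varphi'(0)$; the standard Taylor integral remainder for $C^2$ Banach-space-valued functions delivers \eqref{rir0} at once. For the general $A$, observe that the factorization forces $|f^{[2]}|\le\|a\|_\infty\|b\|_\infty$, so, together with the continuity that comes from $f\in C^2$, $f^{[2]}\in C_b(\R^3)$. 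Approximating $A$ by $A_k := E^A((-k,k))A$ from Lemma \ref{existence}, Lemma \ref{RC} gives $A_k+tK\to A+tK$ resolvent strongly for each $t$, and the already-established case supplies \eqref{rir0} for each pair $(A_k,K)$. Proposition \ref{Approx1} handles the convergence of the left-hand side; on the right, the uniform bound $\|[\Gamma^{A_k+tK,A_k+tK,A_k+tK}(f^{[2]})](K,K)\|_2\le\|a\|_\infty\|b\|_\infty\|K\|_2^2$ from Proposition \ref{new}, combined with pointwise $\Sc^2$-convergence (Proposition \ref{Approx1} again), legitimizes dominated convergence for the $\Sc^2$-valued Bochner integral.

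For \eqref{rir1}, set $G(t):=[\Gamma^{A+tK,A+tK,A+tK}(f^{[2]})](K,K)$. Proposition \ref{Simplf} applied with $\psi(s,r):=f^{[2]}(s,r,s)\in C_b(\R^2)$ yields $\tr(G(t))=\langle[\Gamma^{A+tK,A+tK}(\psi)](K),K^*\rangle_{\Sc^2}$, so Proposition \ref{Approx1} makes $t\mapsto\tr(G(t))$ continuous and uniformly bounded. The substantive step is the interchange of $\tr$ with the integral in \eqref{rir0}, which I would achieve by showing $G:[0,1]\to\Sc^1(\H)$ is Bochner integrable. By Proposition \ref{Approx1} applied to $f^{[2]}\in C_b(\R^3)$, $G$ is $\Sc^2$-continuous. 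Since $\Sc^1(\H)$ is separable, Pettis's theorem reduces $\Sc^1$-strong measurability to weak measurability, i.e., to the Borel measurability of $t\mapsto\tr(TG(t))$ for every $T\in\B(\H)=\Sc^1(\H)^*$. Taking $T_m:=P_mTP_m$ with $P_m$ the projection onto the span of the first $m$ vectors of a fixed orthonormal basis, one has $\|T_m\|\le\|T\|$ and $\tr(T_mX)\to\tr(TX)$ for every $X\in\Sc^1(\H)$ (because $P_mXP_m\to X$ in $\Sc^1$-norm); each $t\mapsto\tr(T_mG(t))=\langle G(t),T_m^*\rangle_{\Sc^2}$ is continuous in $t$, so $\tr(TG(t))$ is a pointwise limit of continuous functions, hence Borel. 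The uniform bound $\|G(t)\|_1\le\|a\|_\infty\|b\|_\infty\|K\|_2^2$ from Proposition \ref{new} then secures $\Sc^1$-Bochner integrability, and the $\Sc^1$- and $\Sc^2$-Bochner integrals agree under the continuous inclusion $\Sc^1(\H)\hookrightarrow\Sc^2(\H)$, so \eqref{rir0} holds also in $\Sc^1$. Applying the bounded functional $\tr$ on $\Sc^1(\H)$ yields \eqref{rir1}.

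The chief obstacle I expect is exactly this trace interchange: the integral delivered by Taylor is intrinsically $\Sc^2$-valued, whereas $\tr$ is continuous only on $\Sc^1$, so one cannot pull $\tr$ inside \eqref{rir0} naively. The Pettis-measurability route above resolves it, the critical ingredient being the sequential approximation of any $T\in\B(\H)$ by finite-rank $T_m$ compatibly with the $\Sc^1$-pairing, which is available because $\H$, and hence $\Sc^1(\H)$, is separable.
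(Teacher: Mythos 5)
Your proof of \eqref{rir0} is correct and follows essentially the same path as the paper: approximate $A$ by the bounded truncations of Lemma \ref{existence}, apply Theorem \ref{Formula} and the Taylor integral remainder to each truncation, and pass to the limit using Proposition \ref{Approx1}, Lemma \ref{RC}, the uniform bound $\|f^{[2]}\|_\infty\le\|a\|_\infty\|b\|_\infty$, and dominated convergence for the $\Sc^2$-valued integral (the paper packages the limit of the left-hand side as Corollary \ref{CoroTaylor} and phrases the Taylor step as an integration by parts against scalar functionals, but this is the same argument).

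For \eqref{rir1} your route is genuinely different from, and in a sense sharper than, the paper's. The paper explicitly remarks that it does \emph{not} know whether $t\mapsto h(t)=\big[\Gamma^{A+tK,A+tK,A+tK}(f^{[2]})\big](K,K)$ is measurable as an $\Sc^1$-valued map, and circumvents the issue: it expands $\tr$ of the $\Sc^2$-Bochner integral along a Hilbertian basis, writes the partial sums as $S_n(t)=(1-t)\tr(P_nh(t))$, dominates them by $\|h(t)\|_1\le\|a\|_\infty\|b\|_\infty\|K\|_2^2$ from \eqref{normineqS1}, and concludes by scalar dominated convergence. You instead settle the measurability question affirmatively: using separability of $\Sc^1(\H)$ and Pettis's theorem, you reduce to Borel measurability of $t\mapsto\tr(TG(t))$ for $T\in\B(\H)$, which you obtain as a pointwise limit of the continuous functions $t\mapsto\tr(P_mTP_mG(t))$ (continuous because $P_mTP_m\in\Sc^2(\H)$ and $G$ is $\Sc^2$-continuous by Proposition \ref{Approx1}, and convergent because $P_mXP_m\to X$ in $\Sc^1$ for $X\in\Sc^1$). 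Combined with the uniform $\Sc^1$-bound this gives genuine $\Sc^1$-Bochner integrability, after which \eqref{rir1} follows by applying the bounded functional $\tr$. Both arguments are valid and use the same two essential ingredients (the $\Sc^2$-continuity of $G$ and the uniform $\Sc^1$-bound of Proposition \ref{new}); yours buys the stronger statement that \eqref{rir0} holds as an $\Sc^1$-valued Bochner integral, at the cost of invoking Pettis's theorem, while the paper's is more elementary but only yields the scalar identity.
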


\begin{proof} By Lemma $\ref{existence}$, we let $(A_n)_n$ be a sequence of bounded selfadjoint operators on $\mathcal{H}$ converging resolvent strongly to $A$. Let, for any $n \in \mathbb{N}$, $\varphi_n$ be the function defined by
\begin{equation*}
\varphi_n : t\in \mathbb{R} \mapsto f(A_n+tK)-f(A_n) \in \mathcal{S}^2(\mathcal{H}).
\end{equation*}
The assumptions on $f$ imply, by Theorem $\ref{Formula}$, that $\varphi_n$ is twice differentiable and
\begin{align}
\label{rir2}
\varphi_n^{(2)}(t)=2\big[\Gamma^{A_n+tK,A_n+tK,A_n+tK}(f^{[2]})\big](K,K)
\end{align}
is continuous from $\mathbb{R}$ into $\mathcal{S}^2(\mathcal{H})$. Hence, the integral
\begin{equation*}
2\int_0^1(1-t)\,\big[\Gamma^{A_n+tK,A_n+tK,A_n+tK}(f^{[2]})\big](K,K)\,dt
\end{equation*}
exists in $\mathcal{S}^2(\mathcal{H})$.

Let $\psi$ be a continuous linear functional on $\mathcal{S}^2(\mathcal{H})$. Then,
\begin{align}
\label{rir3}
\frac{d^2}{dt^2}\psi(\varphi_n(t))=\frac{d}{dt}\psi\left(\frac{d}{dt}\varphi_n(t)\right)=\psi(\varphi_n^{(2)}(t)).
\end{align}
Integrating by parts ensures
\begin{align}
\label{rir4}
\psi\left(\varphi_n(1) - \varphi_n(0) -\frac{d}{dt}\varphi_n(t)_{|t=0}\right)
=\int_0^1(1-t)\,\frac{d^2}{dt^2}\psi(\varphi_n(t))\,dt.
\end{align}
Applying the equalities \eqref{rir3} and \eqref{rir2} in \eqref{rir4} gives
\begin{align*}
\psi\left(\varphi_n(1) - \varphi_n(0) - \frac{d}{dt}\varphi_n(t)_{|t=0}\right)
& = 2\int_0^1(1-t)\,\psi\left(\big[\Gamma^{A_n+tK,A_n+tK,A_n+tK}(f^{[2]})\big](K,K)\right)\,dt\\
& = \psi\left(2\int_0^1(1-t)\,\big[\Gamma^{A_n+tK,A_n+tK,A_n+tK}(f^{[2]})\big](K,K)\,dt\right).
\end{align*}
The previous equality holds true for every $\psi$ so we deduce
\begin{align}\label{rir5}
\varphi_n(1) - \varphi_n(0) - \frac{d}{dt}\varphi_n(t)_{|t=0} = 2\int_0^1(1-t)\,\big[\Gamma^{A_n+tK,A_n+tK,A_n+tK}(f^{[2]})\big](K,K)\,dt.
\end{align}

By Corollary $\ref{CoroTaylor}$, we have
\begin{equation*}
\mathcal{S}^2\text{-}\underset{n \rightarrow +\infty}{\lim} \Big(\varphi_n(1) - \varphi_n(0) -\frac{d}{dt}\varphi_n(t)_{|t=0}\Big) = \Gamma(A,K).
\end{equation*}
By Proposition $\ref{Approx1}$, we have
\begin{equation*}
\mathcal{S}^2\text{-}\underset{n \rightarrow +\infty}{\lim} \ \left[\Gamma^{A_n+tK,A_n+tK,A_n+tK}(f^{[2]})\right](K,K) = \left[\Gamma^{A+tK,A+tK,A+tK}(f^{[2]})\right](K,K).
\end{equation*}
Note that for all $n\in \mathbb{N}$ and for all $0 \leq t \leq 1$, $\Gamma^{A_n+tK,A_n+tK,A_n+tK}$ is a contraction by Proposition $\ref{nintegrals}$ so we get the inequality
\begin{equation*}
\|(1-t)\,\big[\Gamma^{A_n+tK,A_n+tK,A_n+tK}(f^{[2]})\big](K,K)\|_2 \leq \|f^{[2]}\|_{\infty} \|K\|_2^2 \leq \dfrac{1}{2} \|f''\|_{\infty} \|K\|_2^2.
\end{equation*}
Hence, applying Lebesgue's dominated convergence theorem in \eqref{rir5} gives
\begin{align}
\Gamma(A,K) = \int_0^1(1-t)\,\big[\Gamma^{A+tK,A+tK,A+tK}(f^{[2]})\big](K,K)\,dt,
\end{align}
which completes the proof of \eqref{rir0}.

We now prove the equality $(\ref{rir1})$. Let
\begin{equation*}
h : t\in [0,1] \mapsto \big[\Gamma^{A+tK,A+tK,A+tK}(f^{[2]})\big](K,K).
\end{equation*}
We have to show that
\begin{equation}\label{trint}
\tr \left( \int_0^1(1-t)h(t)\,dt \right) = \int_0^1(1-t)\, \tr(h(t))\,dt.
\end{equation}
Note that we do not know whether $h$ is measurable as an $\mathcal{S}^1$-valued map, and this is why the proof requires some care.

Let $(e_k)_{k \geq 1}$ be a Hilbertian basis of $\mathcal{H}$. By Bochner integrability of $t \mapsto (1-t)h(t)$ and continuity of the linear functional $T \in \mathcal{S}^2(\mathcal{H}) \mapsto \left\langle Th_1, h_2 \right\rangle$ for any $h_1, h_2 \in \mathcal{H}$,
\begin{align*}
\tr \left( \int_0^1(1-t)h(t)\,dt \right)
& = \sum_{k=1}^{+\infty} \left\langle \left( \int_0^1(1-t)h(t)\,dt \right)e_k, e_k \right\rangle\\
& = \sum_{k=1}^{+\infty} \int_0^1(1-t) \left\langle h(t)e_k, e_k \right\rangle dt.
\end{align*}
For any $n \in \mathbb{N}, n\geq 1$ and $0\leq t \leq 1$, we let
\begin{equation*}
S_n(t) = \sum_{k=1}^n (1-t) \left\langle h(t)e_k, e_k \right\rangle.
\end{equation*}
Then
\begin{equation*}
\tr \left( \int_0^1(1-t)h(t)\,dt \right) = \underset{n \rightarrow +\infty}{\lim} \ \int_0^1 S_n(t)\,dt.
\end{equation*}
Let $P_n$ be the orthogonal projection onto $\text{Span}(e_1, \ldots, e_n)$. Then,
\begin{equation*}
S_n(t) =  (1-t) \tr(P_n h(t))
\end{equation*}
so that, by $(\ref{normineqS1})$ of  Proposition \ref{new}, we have for any $0 \leq t \leq 1$,
\begin{equation*}
|S_n(t)| \leq \|h(t)\|_1 \leq \|a\|_{\infty} \|b\|_{\infty} \|K\|_2^2.
\end{equation*}
Hence, \eqref{rir1} follows from above by applying the dominated convergence theorem.
\end{proof}

For any $\alpha\in\mathbb{C}\setminus\mathbb{R}$ and any integer $m\geq 0$, let $f_{m,\alpha} : \mathbb{R} \rightarrow \mathbb{C}$ be defined by
\begin{equation}\label{fma}
f_{m,\alpha}(x) = \dfrac{1}{(x-\alpha)^m}, \ x\in \mathbb{R}.
\end{equation}
The vector space generated by $f_{m,\alpha}, \alpha\in\mathbb{C}\setminus\mathbb{R}, m\geq 0,$ is the space of rational functions with nonpositive degree and poles off $\mathbb{R}$.

\begin{lemma}\label{rationalexample}\
\begin{enumerate}
\item[$(i)$] Every $f_{m,\alpha}^{[2]}$ defined in \eqref{fma} belongs to the algebraic tensor product $C_b(\mathbb{R}) \otimes C_b(\mathbb{R}) \otimes C_b(\mathbb{R})$.
\item[$(ii)$] Each $\phi \in C_b(\mathbb{R}) \otimes C_b(\mathbb{R}) \otimes C_b(\mathbb{R})$ admits a factorization \eqref{star} for some separable Hilbert space $H$ and some bounded Borel functions $a,b : \mathbb{R}^2 \rightarrow \mathbb{C}$.
\end{enumerate}

\end{lemma}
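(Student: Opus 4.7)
The plan has two essentially independent parts.

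For part (i), I first dispose of $m=0$ trivially, since $f_{0,\alpha}\equiv 1$ forces $f_{0,\alpha}^{[2]}=0$. For $m\geq 1$, the key step is to establish, by induction on $n\geq 0$ with $m$ fixed, the partial-fraction formula
\[
f_{m,\alpha}^{[n]}(x_0,\dots,x_n)=(-1)^n\sum_{\substack{k_0+\cdots+k_n=m-1\\ k_i\geq 0}}\prod_{i=0}^n\frac{1}{(x_i-\alpha)^{k_i+1}}.
\]
The base case $n=0$ is the definition of $f_{m,\alpha}$, and the inductive step is an elementary algebraic manipulation of the difference quotient appearing in the recursive definition of the divided difference. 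Taking $n=2$ writes $f_{m,\alpha}^{[2]}$ as a finite linear combination of elementary tensors $g_{k_0}\otimes g_{k_1}\otimes g_{k_2}$, where $g_k(x)=(x-\alpha)^{-(k+1)}$. Since $\Im\alpha\neq 0$, one has $|x-\alpha|\geq|\Im\alpha|>0$ on $\mathbb{R}$, so each $g_k$ lies in $C_b(\mathbb{R})$, which gives the claim.

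For part (ii), write $\phi=\sum_{i=1}^N u_i\otimes v_i\otimes w_i$ with $u_i,v_i,w_i\in C_b(\mathbb{R})$. I would take $H=\mathbb{C}^N$ with its canonical orthonormal basis $(e_i)_{i=1}^N$ and define
\[
a(s,t)=\sum_{i=1}^N u_i(s)v_i(t)\,e_i,\qquad b(t,u)=\sum_{i=1}^N\overline{w_i(u)}\,e_i.
\]
Both $a$ and $b$ are bounded continuous (hence bounded Borel) $H$-valued maps on $\mathbb{R}^2$, and a one-line computation gives
\[
\langle a(s,t),b(t,u)\rangle=\sum_{i=1}^N u_i(s)v_i(t)w_i(u)=\phi(s,t,u),
\]
which is exactly the factorization (\ref{star}).

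Neither part poses a serious obstacle: (i) is a direct induction on the order of the divided difference, and (ii) just reformulates an algebraic tensor as an inner-product pairing using a finite orthonormal basis. The only points to verify are the boundedness of the partial-fraction factors in (i), which is immediate since $\alpha\notin\mathbb{R}$, and the separability of the chosen $H$ in (ii), which is trivial since $H$ is finite-dimensional. The distribution of the middle factor $v_i(t)$ into $a(s,t)$ rather than $b(t,u)$ is a matter of convenience; any split would produce a valid factorization.
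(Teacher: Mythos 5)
Your proposal is correct and follows essentially the same route as the paper: part (i) rests on the partial-fraction decomposition of the divided differences of $(x-\alpha)^{-m}$ (your closed formula for $f_{m,\alpha}^{[n]}$ specializes at $n=2$ to exactly the two-step computation in the paper, and boundedness of each factor follows from $|x-\alpha|\geq|\Im\alpha|>0$), while part (ii) is the same observation that a finite sum of elementary tensors factors through a finite-dimensional, hence separable, Hilbert space. The only cosmetic difference is that the paper phrases (ii) as ``the set of functions satisfying the factorization is a vector space and elementary tensors satisfy it with $H=\mathbb{C}$,'' whereas you make the direct sum over $\mathbb{C}^N$ explicit.
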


\begin{proof}
$(i)$ If $m=0$ then $f_{m,\alpha}^{[2]} = 0$. Assume now that $m\geq 1$ and write, for simplicity, $f_m = f_{m,\alpha}$ for a fixed complex number $\alpha\in\mathbb{C}\setminus\mathbb{R}$. We have, for any $x_0, x_1 \in \mathbb{R}$, $x_0 \neq x_1$,
$$
f_m^{[1]}(x_0, x_1) = \dfrac{f_m(x_0)-f_m(x_1)}{x_0-x_1} = \dfrac{1}{(x_0-x_1)(x_0 - \alpha)^m} - \dfrac{1}{(x_0-x_1)(x_1 - \alpha)^m}.
$$
Fixing $x_1 \in \mathbb{R}$ and then taking the partial fraction decomposition of $x_0 \mapsto \dfrac{1}{(x_0-x_1)(x_0 - \alpha)^m}$, we get
$$
\dfrac{1}{(x_0-x_1)(x_0 - \alpha)^m} = \dfrac{1}{(x_0-x_1)(x_1 - \alpha)^m} - \sum_{k=1}^m f_k(x_0)f_{m-k+1}(x_1).
$$
Hence, by continuity,
\begin{equation}\label{examplef1}
f_m^{[1]} = - \sum_{k=1}^m f_k \otimes f_{m-k+1}\in C_b(\mathbb{R}) \otimes C_b(\mathbb{R}).
\end{equation}
Then, an easy computation shows that for any $x_0, x_1, x_2 \in \mathbb{R}$,
\begin{equation*}
f_m^{[2]}(x_0, x_1, x_2) = - \sum_{k=1}^m f_k^{[1]}(x_0,x_1)f_{m-k+1}(x_2).
\end{equation*}
By \eqref{examplef1}, $f_k^{[1]} \in C_b(\mathbb{R}) \otimes C_b(\mathbb{R})$ for any $k\geq 1$ so the latter implies that
$$
f_m^{[2]} \in C_b(\mathbb{R}) \otimes C_b(\mathbb{R}) \otimes C_b(\mathbb{R}).
$$

$(ii)$ Note that the set of functions satisfying \eqref{star} is a vector space. Since an elementary tensor of $C_b(\mathbb{R}) \otimes C_b(\mathbb{R}) \otimes C_b(\mathbb{R})$ satisfies \eqref{star} with $H = \mathbb{C}$, we obtain $(ii)$.
\end{proof}

Below we prove our second main result.

\begin{proof}[Proof of Theorem $\ref{ThmA}$]
Let $f$ satisfy the assumptions of Theorem $\ref{ThmA}$. The first step is to show the existence of a complex measure $\gamma$ defined on the Borel subsets of $\mathbb{R}$, depending only on $A$ and $K$, such that
\begin{align*}
\tr(\Gamma(A,K)) = \int_{\mathbb{R}} f''(v)\, d\gamma(v).
\end{align*}

Let $0 \leq t \leq 1$.
For simplicity, write $A_t = A + tK$. By Proposition $\ref{Simplf}$,
\begin{align*}
\tr\left(\big[\Gamma^{A_t,A_t,A_t}(f^{[2]})\big](K,K)\right)
=\tr\left(\big[\Gamma^{A_t,A_t}(\psi)\big](K)K\right),
\end{align*}
where
\[\psi(s,t)=f^{[2]}(s,t,s).\]
Hence, by $(\ref{rir1})$,
\begin{equation}
\label{lastformula1}
\tr(\Gamma(A,K)) = 2\int_0^1(1-t)\, \tr\left(\big[\Gamma^{A_t,A_t}(\psi)\big](K)K\right)\,dt.
\end{equation}

Let $E^t$ denote the spectral measure on $\R^2$ with values in $\mathcal{B}(\mathcal{S}^2(\mathcal{H}))$ given by
\begin{equation*}
E^t(U) = \Gamma^{A_t,A_t}(\chi_{U}),
\end{equation*}
where $U \subset \mathbb{R}^2$ is a Borel set and $\chi_{U}$ is the characteristic function of $U$. This measure appears in the Birman-Solomyak double operator integral \cite{BS1} and, in particular,
\[E^t(U_1\times U_2)(K)= E^{A_t}(U_1)KE^{A_t}(U_2),\]
for all Borel subsets $U_1,U_2$ of $\R$, every $K\in\Sc^2(\H)$.
By properties of the double operator integral,

\begin{equation*}
\Gamma^{A_t,A_t}(\psi) = \int_{\mathbb{R}^2} \psi(x,y) \, dE^t(x,y).
\end{equation*}
Hence
\begin{equation*}
\tr\left(\big[\Gamma^{A_t,A_t}(\psi)\big](K)K\right) = \int_{\mathbb{R}^2} \psi(x,y) \, d\nu_t(x,y),
\end{equation*}
where $\nu_t$ is the complex measure defined for every Borel subset $U \subset \mathbb{R}^2$ by
\begin{equation*}
\nu_t(U) = \tr \left(\left[\Gamma^{A_t,A_t}(\chi_{U})\right](K)K \right).
\end{equation*}
If $g\in C_0(\mathbb{R}^2)$, then
\begin{align*}
\left\langle \nu_t, g \right\rangle = \int_{\mathbb{R}^2} g(x,y)\, d\nu_t(x,y) = \tr \left(\left[\Gamma^{A_t,A_t}(g)\right](K)K \right)
\end{align*}
is continuous in $t$ by Proposition $\ref{Approx1}$. Hence, the mapping
\begin{equation*}
t \in [0,1] \mapsto \nu_t \in (C_0(\mathbb{R}^2))^*,
\end{equation*}
where $(C_0(\mathbb{R}^2))^*$ is equipped with the $w^*$-topology, is continuous.

Since $(\nu_t)_t$ is bounded in $(C_0(\mathbb{R}^2))^*$, we can define
\begin{equation*}
\nu = 2 \int_0^1 (1-t) \nu_t \, dt \in (C_0(\mathbb{R}^2))^*
\end{equation*}
in the $w^*$-sense, that is, for any $g\in C_0(\mathbb{R}^2)$,
\begin{equation*}
\left\langle \nu, g \right\rangle = 2\int_0^1 (1-t)\left\langle \nu_t, g \right\rangle dt.
\end{equation*}
Moreover, a simple application of the dominated convergence theorem implies that the previous equality holds true whenever $g\in C_b(\mathbb{R}^2)$. By $(\ref{lastformula1})$ we deduce
\begin{align*}
\tr(\Gamma(A,K))
& = 2\int_0^1(1-t)\, \left( \int_{\mathbb{R}^2} \psi(x,y) \ d\nu_t(x,y) \right)\,dt \\
& = \int_{\mathbb{R}^2} \psi(x,y) \, d\nu(x,y).
\end{align*}
By the integral representation \cite[(5.9)]{PSS-SSF},
\begin{align*}
\psi(x,y) = \int_0^1 \lambda f''(\lambda x + (1-\lambda)y) \, d\lambda.
\end{align*}
By a change of variables we get, setting $u=\lambda x+(1-\lambda)y$, $\lambda =\frac{u-x}{x-y}$ and $du=(x-y)d\lambda$,
\begin{equation*}
\psi(x,y) = \dfrac{1}{2} f''(x) \chi_{\left\lbrace x=y \right\rbrace} + \left(\int_{\mathbb{R}} \dfrac{|u-y|}{(x-y)^2} \chi_{[x,y]}(u) f''(u) du \right) \chi_{\left\lbrace x \neq y \right\rbrace},
\end{equation*}
where $\chi_{[x,y]}$ is understood as $\chi_{[y,x]}$ in the case when $y < x$. Let
\begin{equation*}
h(x,y,u)=\dfrac{|u-y|}{(x-y)^2} \chi_{[x,y]}(u) \chi_{\left\lbrace x \neq y \right\rbrace}(x,y).
\end{equation*}
The function $h$ is nonnegative and $\int_{\mathbb{R}} h(x,y,u) \ du = \frac{1}{2}$ for all $(x,y) \in \mathbb{R}^2$. Since $\nu$ is a finite measure, the function $h$ is integrable on $\mathbb{R}^3$ with respect to $d\nu \otimes du$. Hence by Fubini's theorem, the function
\begin{equation*}
\kappa : u \in \mathbb{R} \mapsto \int_{\mathbb{R}^2} h(x,y,u) \, d\nu(x,y)
\end{equation*}
is integrable. Let $\Delta$ denote the diagonal of $\mathbb{R}^2$. Another application of
Fubini's theorem (using the fact that $f''$ is bounded) yields
\begin{align*}
\tr(\Gamma(A,K))
& = \dfrac{1}{2} \int_{\Delta} f''(x) \, d\nu(x,y) + \int_{\mathbb{R}^2} \left( \int_{\mathbb{R}} h(x,y,u)f''(u) du \right) d\nu(x,y) \\
& = \dfrac{1}{2} \int_{\Delta} f''(x) \, d\nu(x,y) + \int_{\mathbb{R}} f''(u) \left( \int_{\mathbb{R}^2} h(x,y,u) \, d\nu(x,y) \right) du \\
& = \dfrac{1}{2} \int_{\Delta} f''(x) \, d\nu(x,y) + \int_{\mathbb{R}} f''(u) \kappa(u)\, du.
\end{align*}
Denote by $\alpha$ the measure defined on Borel subsets $V \subset \mathbb{R}$ by
\begin{equation*}
\alpha(V) = \dfrac{1}{2} \nu( (V \times V) \cap \Delta),
\end{equation*}
by $\beta$ the measure
\begin{equation*}
\beta= \kappa\,du,
\end{equation*}
and let
\[\gamma = \alpha + \beta.\]
The previous computation implies that
\begin{align*}
\tr(\Gamma(A,K))= \int_{\mathbb{R}} f''(v)\, d\gamma(v).
\end{align*}

Let $\eta$ be the Koplienko spectral shift function, so that \eqref{Kop} holds for any rational function with nonpositive degree and poles off $\mathbb{R}$. The last step is to prove that $\gamma$ coincides with $\eta dt$. Let $f_m(x) = \dfrac{1}{(x-i)^m}$ for $m\in\N$. By Lemma \ref{rationalexample}, the functions $f_m$ and their conjugate functions $\overline{f_m}$ satisfy the assumptions of Theorem $\ref{ThmA}$. Hence, for every $m\in\N$,
\begin{equation*}
\int_{\mathbb{R}} f_m''(v)\, d\gamma(t) = \int_{\mathbb{R}} f_m''(t) \eta(t)\, dt
\end{equation*}
and similarly for the conjugate functions $\overline{f_m}''$. Note that the space generated by the $f_m''$ and $\overline{f_m}'', m \in \mathbb{N}, m\geq 1$, is equal to $\text{Span}\left\lbrace f_k, \overline{f_k}, k\geq 3 \right\rbrace$ which is, by Stone-Weierstrass theorem, dense in $C_0(\mathbb{R})$. This implies that
\begin{equation*}
\gamma = \eta\, dt,
\end{equation*}
concluding the proof.
\end{proof}

We now give two examples of classes of functions that satisfy assumptions of Theorem $\ref{ThmA}$.

The first example is the Besov class $B^2_{\infty 1}(\mathbb{R})$. We start by recalling its definition. Let $w_0\in C^{\infty}(\mathbb{R})$ be such that its Fourier transform is supported in $[-2, -1/2] \cup [1/2, 2]$, $\hat{w_0}$ is an even function and $\hat{w_0}(y) + \hat{w_0}(y/2) = 1$ for $1 \leq y \leq 2$. Set $w_n(x) = 2^n w_0(2^nx)$ for $x\in \mathbb{R}, n\in \mathbb{Z}$. Following \cite{Peller2005}, the Besov space is defined as the set
$$
B^2_{\infty 1}(\mathbb{R}) = \Big\lbrace f\in C^2(\mathbb{R}) \ | \ \ \|f''\|_{\infty} + \sum_{n\in \mathbb{Z}} 2^{2n} \|f \ast w_n \|_{\infty} < \infty \Big\rbrace,
$$
equipped with the seminorm
$$
\|f\|_{B^2_{\infty 1}(\mathbb{R})} = \|f''\|_{\infty} + \sum_{n\in \mathbb{Z}} 2^{2n} \|f \ast w_n \|_{\infty}.
$$

We refer, e.g., to \cite{Peller2005} for characterizations of elements of Besov spaces. We prove below that if $f \in B^2_{\infty 1}(\mathbb{R})$, then the divided difference $f^{[2]}$ satisfies the property $(\ref{Hypf})$, and hence Theorem \ref{ThmA}. In this case, we recover \cite[Theorem 4.6]{Peller2005}.

\begin{proposition}\label{Besovfacto}
Let $f\in B^2_{\infty 1}(\mathbb{R})$. Then there exist a separable Hilbert space $H$ and two bounded Borel functions $a : \mathbb{R} \rightarrow H$ and $b : \mathbb{R}^2 \rightarrow H$ such that
\begin{equation*}
f^{[2]}(s,t,u) = \left\langle a(s), b(t,u) \right\rangle,\quad \forall (s,t,u)\in \mathbb{R}^3.
\end{equation*}
In particular, $f^{[2]}$ satisfies $(\ref{Hypf})$.
\end{proposition}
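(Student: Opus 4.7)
The plan is to reduce to the band-limited case via a Littlewood--Paley decomposition, factorize each band-limited piece using the Cauchy integral representation of the divided difference, and aggregate via an $\ell^2$-direct sum of Hilbert spaces.

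First, modulo a polynomial of degree $\leq 1$ (which contributes zero to $f^{[2]}$), I would write $f = \sum_{n\in\mathbb{Z}} f_n$ with $f_n = f \ast w_n$. Each $f_n$ is an entire function of exponential type $\sigma_n \sim 2^n$ bounded on $\R$; the Besov seminorm gives $\sum_n 2^{2n}\|f_n\|_\infty \lesssim \|f\|_{B^2_{\infty 1}(\R)} < \infty$, and Paley--Wiener yields $|f_n(x+iy)| \leq \|f_n\|_\infty e^{\sigma_n|y|}$ for all $x,y\in\R$. Together with the Genocchi--Hermite integral representation of the divided difference and Bernstein's inequality $\|f_n''\|_\infty \lesssim \sigma_n^2 \|f_n\|_\infty$, this implies $\sum_n f_n^{[2]} = f^{[2]}$ uniformly on $\R^3$.

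Second, for each $f_n$ I would apply the classical Cauchy integral representation: for any positively oriented closed contour $\Gamma$ surrounding $s,t,u \in \R$,
\[f_n^{[2]}(s,t,u) = \frac{1}{2\pi i}\oint_\Gamma \frac{f_n(z)}{(z-s)(z-t)(z-u)}\,dz.\]
Deforming $\Gamma$ to the two-line contour $\Gamma_n := (\R + i\epsilon_n)\cup(\R - i\epsilon_n)$ with $\epsilon_n = 2^{-n}$ (and the natural orientation) is justified by a truncation-at-$\pm R$ argument, since the Paley--Wiener bound on $f_n$ combined with the $|z|^{-3}$ decay of the denominator makes the vertical sides vanish as $R\to\infty$. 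Using the variable separation
\[\frac{1}{(z-s)(z-t)(z-u)} = \frac{1}{z-s}\cdot\frac{1}{(z-t)(z-u)},\]
I would set, on $H_n := L^2(\Gamma_n, d|z|)$,
\[a_n(s)(z) := \frac{1}{z-s}, \qquad b_n(t,u)(z) := \frac{f_n(z)}{(z-t)(z-u)},\]
with the prefactor $(2\pi i)^{-1}$ and orientation sign absorbed into $b_n$. Elementary Poisson-kernel calculations yield
\[\|a_n(s)\|_{H_n}^2 = \frac{2\pi}{\epsilon_n}, \qquad \|b_n(t,u)\|_{H_n}^2 \lesssim \frac{\|f_n\|_\infty^2 \,e^{2\sigma_n\epsilon_n}}{\epsilon_n^3},\]
uniformly in $s,t,u\in\R$, so $\|a_n\|_\infty\|b_n\|_\infty \lesssim 2^{2n}\|f_n\|_\infty$ for the choice $\epsilon_n = 2^{-n}$.

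Finally, since $\sum_n \|a_n\|_\infty\|b_n\|_\infty \lesssim \|f\|_{B^2_{\infty 1}(\R)} < \infty$, one may choose weights $\lambda_n > 0$ (e.g.\ $\lambda_n = \|b_n\|_\infty/\|a_n\|_\infty$ where nonzero) making both $\sum_n \lambda_n\|a_n\|_\infty^2$ and $\sum_n \lambda_n^{-1}\|b_n\|_\infty^2$ finite, and set $H := \bigoplus_n H_n$, $a(s) := (\sqrt{\lambda_n}\,a_n(s))_n$, $b(t,u) := (\lambda_n^{-1/2}\,b_n(t,u))_n$. This produces bounded Borel maps with $f^{[2]}(s,t,u) = \langle a(s), b(t,u)\rangle_H$. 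The main obstacle is the second step: justifying the Cauchy representation on a non-closed contour (via a truncation and Paley--Wiener argument) and calibrating $\epsilon_n$ exactly to the dyadic scale $2^{-n}$ dictated by the Besov weights; any other scaling would spoil the summability underpinning the aggregation step.
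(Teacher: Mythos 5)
Your route is genuinely different from the paper's. The paper does not reprove any factorization of $f^{[2]}$ from scratch: it quotes \cite[Theorem 5 (ii)]{PS} (equivalently \cite[Theorem 5.1]{Peller2006}), which already supplies an integral factorization $f^{[2]}(s,t,u)=\int_\Omega \alpha(s,w)\beta(t,w)\gamma(u,w)\,d\sigma(w)$ with $\int_\Omega \|\alpha(\cdot,w)\|_\infty\|\beta(\cdot,w)\|_\infty\|\gamma(\cdot,w)\|_\infty\,d\sigma(w)<\infty$, and then only has to rebalance via the weight $\delta(w)=(\|\beta(\cdot,w)\|_\infty\|\gamma(\cdot,w)\|_\infty/\|\alpha(\cdot,w)\|_\infty)^{1/2}$ to land in $H=L^2(\Omega,\sigma)$. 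Your Littlewood--Paley plus two-line Cauchy contour construction is essentially a self-contained proof of that cited input; the estimates $\|a_n(s)\|^2=2\pi/\epsilon_n$, $\|b_n(t,u)\|^2\lesssim \|f_n\|_\infty^2 e^{2\sigma_n\epsilon_n}\epsilon_n^{-3}$ and the choice $\epsilon_n=2^{-n}$, $\lambda_n=\|b_n\|_\infty/\|a_n\|_\infty$ are all correct and do yield $\sum_n\|a_n\|_\infty\|b_n\|_\infty\lesssim \sum_n 2^{2n}\|f\ast w_n\|_\infty$. What your approach buys is independence from the external reference; what it costs is having to justify the contour deformation and the Littlewood--Paley convergence yourself.

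There is, however, one concrete error in the first step. The residual polynomial in $f-\sum_n f\ast w_n$ has degree at most $2$, not at most $1$: the hypothesis only gives $f''$ bounded, so the difference (whose Fourier transform is supported at the origin and whose second derivative is bounded) can be a genuine quadratic, and a quadratic contributes a nonzero \emph{constant} to $f^{[2]}$, not zero. This is not a vacuous case: $f(x)=x^2$ lies in $B^2_{\infty 1}(\mathbb{R})$ as defined here (all moments of $w_n$ vanish, so $f\ast w_n=0$ for every $n$, while $\|f''\|_\infty=2$), yet $f^{[2]}\equiv 1$ while every $f_n^{[2]}=0$; this is exactly the example the paper isolates in Remark \ref{PellerMS}. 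So the identity $\sum_n f_n^{[2]}=f^{[2]}$ fails in general --- the two sides differ by the constant $\tfrac12\bigl(f''-\sum_n f''\ast w_n\bigr)$. The repair is one line: a constant function on $\mathbb{R}^3$ trivially admits the factorization \eqref{Hypf} with $H=\mathbb{C}$, and adding a one-dimensional summand to your direct sum $\bigoplus_n H_n$ restores the conclusion. You should also make explicit the conjugation needed to turn your duality pairing $\int a_n\,b_n\,d|z|$ into the Hilbert space inner product (define $b_n$ as the complex conjugate of the kernel, as the paper does with its function $b$), but that is bookkeeping. With the quadratic part accounted for, the argument is complete.
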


\begin{proof}
If $f\in B^2_{\infty 1}(\mathbb{R})$, then according to \cite[Theorem 5 (ii)]{PS} (or similary, to \cite[Theorem 5.1]{Peller2006}) and its proof, there exist a measure space $(\Omega, \sigma)$ and bounded measurable functions $\alpha, \beta, \gamma$ on $\mathbb{R} \times \Omega$ such that for all $w\in \Omega$,
$$\alpha(\cdot,w), \ \beta(\cdot,w) \ \ \text{and} \ \ \gamma(\cdot,w)$$
are continuous on $\mathbb{R}$ with
\begin{equation}\label{Besovineq}
\int_{\Omega} \| \alpha(\cdot,w)\|_{\infty} \| \beta(\cdot,w)\|_{\infty} \| \gamma(\cdot,w)\|_{\infty} \, d\sigma(w) < \infty,
\end{equation}
and such that for all $(s,t,u) \in \mathbb{R}^3$,
\begin{equation}\label{Besovfactorization}
f^{[2]}(s,t,u) = \int_{\Omega} \alpha(s,w) \beta(t,w) \gamma(u,w) \, d\sigma(w).
\end{equation}
By construction of the measure space $(\Omega, \sigma)$, the space $H = L^2(\Omega, \sigma)$ is separable.

By changing $\Omega$ if necessary, we can assume that $\| \alpha(\cdot,w)\|_{\infty} \|\beta(\cdot,w)\|_{\infty} \|\gamma(\cdot,w)\|_{\infty} >0$ for every $w\in\Omega$.
Let $\delta$ be the measurable function defined on $\Omega$ by
$$\delta(w) = \left(\dfrac{\| \beta(\cdot,w)\|_{\infty} \| \gamma(\cdot,w)\|_{\infty}}{\| \alpha(\cdot,w)\|_{\infty}}\right)^{1/2}, \ w\in \Omega.$$
For $s,t,u \in \mathbb{R}$, let $a(s)$ and $b(t,u)$ be the measurable functions defined on $\Omega$ by
\begin{equation*}
[a(s)](w) = \alpha(s,w) \delta(w), \ [b(t,u)](w) =\overline{\beta(t,w) \gamma(u,w)} \delta(w)^{-1} , \ w\in \Omega.
\end{equation*}

It is straightforward to check that for all $s,t,u \in \mathbb{R}$,
\begin{equation}\label{ineg1Besov1}
\|a(s)\|_H^2 \leq \int_{\Omega} \| \alpha(\cdot,w)\|_{\infty} \| \beta(\cdot,w)\|_{\infty} \| \gamma(\cdot,w)\|_{\infty} \, d\sigma(w)
\end{equation}
and
\begin{equation}\label{ineg1Besov2}
\|b(t,u)\|_H^2 \leq \int_{\Omega} \| \alpha(\cdot,w)\|_{\infty} \| \beta(\cdot,w)\|_{\infty} \| \gamma(\cdot,w)\|_{\infty} \, d\sigma(w).
\end{equation}
Combining the latter with \eqref{Besovineq} confirms that
$$
a\colon\Rdb\to H
\qquad\hbox{and}\qquad
b\colon\Rdb^2\to H
$$
are bounded.

For any $s_0,s \in\Rdb$,
$$
\Vert a(s) -a(s_0)\Vert_{2}^{2} = \int_{\Omega} \delta(w)^2\bigl\vert
\alpha(s,w) -\alpha(s_0,w)\bigr\vert^{2}\, d\sigma(w).
$$
Since $\alpha$ is continuous in the first variable, the uniform estimate \eqref{ineg1Besov1} and Lebesgue's
dominated convergence theorem imply that $a$ is continuous. Similarly, $b$ is continuous. Finally, by \eqref{Besovfactorization},
$$f^{[2]}(s,t,u) = \left\langle a(s), b(t,u) \right\rangle_H,$$
concluding the proof.
\end{proof}

\begin{remark}\label{PellerMS}
The expressions $f(A+K) - f(A)$ and $\dfrac{d}{dt}f(A+tK)_{|t=0}$ may be not well defined bounded operators when $A$ is unbounded and $f$ is not Lipschitz. In this case, as shown by Theorem \ref{Formula} and Corollary \ref{CoroTaylor}, the proper replacement of the undefined difference $f(A+K) - f(A) - \dfrac{d}{dt}f(A+tK)_{|t=0}$ is given by the triple operator integral $\left[\Gamma^{A+K,A,A}(f^{[2]})\right](K,K)$, which is always well defined provided that $f''$ is bounded.

In \cite{Peller2005}, for $f(x) = x^2$, the undefined difference
\begin{equation*}
f(A+K) - f(A) - \dfrac{d}{dt}f(A+tK)_{|t=0}
\end{equation*}
was replaced by $K^2$, which was validated by the fact
\begin{equation}\label{trK}
\tr(K^2) = 2 \int_{\mathbb{R}} \eta(t) \, dt,
\end{equation}
see \cite[Theorem 4.5]{Peller2005}. Since $f^{[2]}(s,t,u) = 1$ for all $(s,t,u) \in \mathbb{R}^3$, the representation \eqref{MOItensor} ensures
$$\left[\Gamma^{A+K,A,A}(f^{[2]})\right](K,K) = K^2.$$
Hence, $\eqref{trK}$ is a consequence of Theorem $\ref{ThmA}$.
\end{remark}

We now consider the class of bounded functions $\phi : \mathbb{R}^3 \rightarrow \mathbb{C}$ such that there exist Hilbert spaces $E$ and $F$ and bounded functions
\begin{equation*}
\alpha : \mathbb{R} \rightarrow E, \theta : \mathbb{R} \rightarrow \mathcal{B}(F,E), \beta : \mathbb{R} \rightarrow F
\end{equation*}
such that
\begin{equation}\label{factophi}
\phi(s,t,u) = \left\langle \alpha(s), \theta(t)\beta(u) \right\rangle,\quad \forall (s,t,u)\in \mathbb{R}^3.
\end{equation}
This class can be identified with the dual space $(\ell^1_{\mathbb{R}} \overset{h}{\otimes} \ell^1_{\mathbb{R}} \overset{h}{\otimes} \ell^1_{\mathbb{R}})^*,$ where $\overset{h}{\otimes}$ is the Haagerup tensor product. See, e.g., \cite{Ruan} for properties of the Haagerup tensor product and \cite[Theorem 9.44]{Ruan} for the description of the dual of the Haagerup tensor product which yields the above identification.
The property $(\ref{factophi})$ naturally arises in the study of completely bounded triple operator integrals. According to \cite[Theorem 4.12]{Coine}, if $A,B,C$ are selfadjoint operators densely defined in a separable Hilbert space $\mathcal{H}$ and $\phi \in (\ell^1_{\mathbb{R}} \overset{h}{\otimes} \ell^1_{\mathbb{R}} \overset{h}{\otimes} \ell^1_{\mathbb{R}})^*$, then the  triple operator integral $\Gamma^{A,B,C}(\phi)$ extends to a completely bounded mapping
\begin{equation*}
\Gamma^{A,B,C}(\phi) : \mathcal{S}^{\infty}(\mathcal{H}) \overset{h}{\otimes} \mathcal{S}^{\infty}(\mathcal{H}) \rightarrow \mathcal{S}^{\infty}(\mathcal{H}),
\end{equation*}
where $\mathcal{S}^{\infty}(\mathcal{H})$ is the space of compact operators on $\mathcal{H}$. We note that an analogous result holds for operator integrals defined on spaces of functions, see \cite[Theorem 3.4]{JTT}.

In the following, we show that a $C^2$-function $f$ with $f^{[2]} \in (\ell^1_{\mathbb{R}} \overset{h}{\otimes} \ell^1_{\mathbb{R}} \overset{h}{\otimes} \ell^1_{\mathbb{R}})^*$ satisfies the property $(\ref{Hypf})$. Note that in the factorization \eqref{factophi}, we do not assume that $E$ and $F$ are separable so the fact that $f^{[2]}$ satisfies the property $(\ref{Hypf})$ requires an explanation.

\begin{proposition}\label{Haagerup}
Let $\phi : \mathbb{R}^3 \rightarrow \mathbb{C}$ be a separately continuous function in $(\ell^1_{\mathbb{R}} \overset{h}{\otimes} \ell^1_{\mathbb{R}} \overset{h}{\otimes} \ell^1_{\mathbb{R}})^*$ such that $\| \phi \|_{(\ell^1_{\mathbb{R}} \overset{h}{\otimes} \ell^1_{\mathbb{R}} \overset{h}{\otimes} \ell^1_{\mathbb{R}})^*} \leq 1$. Then, there exist $(\alpha_m)_{m \geq 1}, (\beta_n)_{n\geq 1}, (\theta_{m,n})_{m,n\geq 1}$ in $C(\mathbb{R})$ such that
$$
\forall s\in \mathbb{R}, \ \sum_m |\alpha_m(s)|^2 \leq 1,
$$
$$
\forall u\in \mathbb{R}, \ \sum_n |\beta_n(u)|^2 \leq 1,
$$
$$
\forall t \in \mathbb{R}, \ \| [\theta_{m,n}(t)]_{m,n \geq 1} \|_{\mathcal{B}(\ell^2)} \leq 1,
$$
and for all $(s,t,u) \in \mathbb{R}^3$,
\begin{equation}\label{factophi2}
\phi(s,t,u) = \left\langle (\alpha_m(s)), [\theta_{m,n}(t)] (\beta_n(u)) \right\rangle_{\ell^2}.
\end{equation}
In particular, if $f \in C^2(\mathbb{R})$ is such that  $f^{[2]} \in (\ell^1_{\mathbb{R}} \overset{h}{\otimes} \ell^1_{\mathbb{R}} \overset{h}{\otimes} \ell^1_{\mathbb{R}})^*$, then $f^{[2]}$ satisfies the property $(\ref{Hypf})$.
\end{proposition}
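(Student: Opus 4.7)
My plan is to carry out the result in two main pieces: first the factorization \eqref{factophi2} for a separately continuous $\phi$ in the unit ball of $(\ell^1_\mathbb{R}\overset{h}{\otimes}\ell^1_\mathbb{R}\overset{h}{\otimes}\ell^1_\mathbb{R})^*$, and then the deduction of \eqref{Hypf} for $f^{[2]}$. I would start by invoking the duality description of the triple Haagerup tensor product (\cite[Theorem 9.44]{Ruan}): it supplies a bounded factorization $\phi(s,t,u)=\langle\alpha(s),\theta(t)\beta(u)\rangle_E$ with $\alpha:\mathbb{R}\to E$, $\theta:\mathbb{R}\to\mathcal{B}(F,E)$, $\beta:\mathbb{R}\to F$ on Hilbert spaces $E,F$ (a priori non-separable, with no continuity assumed), and $\sup_s\|\alpha(s)\|_E$, $\sup_t\|\theta(t)\|$, $\sup_u\|\beta(u)\|_F$ all bounded by $1$.

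The main work is to refine this into a continuous scalar factorization. First, replace $\alpha$ by its orthogonal projection onto $E_0:=\overline{\mathrm{span}}\{\theta(t)\beta(u):t,u\in\mathbb{R}\}$; this leaves $\phi$ unchanged, and because $s\mapsto\langle\alpha(s),\theta(t)\beta(u)\rangle=\phi(s,t,u)$ is continuous for each fixed $(t,u)$ while $\{\theta(t)\beta(u)\}$ is total in $E_0$, the modified $\alpha$ becomes weakly continuous into $E_0$. Symmetrically, projecting $\beta$ onto $F_0:=\overline{\mathrm{span}}\{\theta(t)^*\alpha(s):s,t\in\mathbb{R}\}$ makes it weakly continuous into $F_0$. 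Separability of $E_0$ and $F_0$ then follows from separability of $\mathbb{R}$ together with these weak-continuity statements: a countable dense $D\subset\mathbb{R}$ yields a countable total subset of each spanning set. Pick Hilbertian bases $(e_m)$ of $E_0$ and $(f_n)$ of $F_0$ and set $\alpha_m(s):=\langle\alpha(s),e_m\rangle$, $\beta_n(u):=\langle\beta(u),f_n\rangle$, $\theta_{m,n}(t):=\langle\theta(t)f_n,e_m\rangle$. A straightforward Fourier expansion recovers the identity \eqref{factophi2}; Parseval gives $\sum_m|\alpha_m(s)|^2\leq 1$, $\sum_n|\beta_n(u)|^2\leq 1$, and $\|[\theta_{m,n}(t)]\|_{\mathcal{B}(\ell^2)}\leq 1$; and continuity of $\alpha_m$ and $\beta_n$ is immediate from the weak continuity of $\alpha$ and $\beta$.

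The delicate point is continuity of $\theta_{m,n}$, which I would extract by a WOT-compactness argument: separability of $E_0$ and $F_0$ makes the closed unit ball of $\mathcal{B}(F_0,E_0)$ metrizable and WOT-compact, so any $t_k\to t$ admits a WOT cluster point $T$; by separate continuity of $\phi$, $\langle\alpha(s),T\beta(u)\rangle=\phi(s,t,u)=\langle\alpha(s),\theta(t)\beta(u)\rangle$ for every $s,u$, and totality of the ranges of $\alpha$ and $\beta$ in $E_0$ and $F_0$ (the reason for the earlier projections) forces $T=\theta(t)$, so $\theta_{m,n}(t_k)\to\theta_{m,n}(t)$. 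For the application to $f^{[2]}$: since $f\in C^2(\mathbb{R})$, $f^{[2]}\in C(\mathbb{R}^3)$ is in particular separately continuous, so the main part supplies continuous $\alpha_m,\beta_n,\theta_{m,n}$ with the stated norm bounds. Setting $H:=\ell^2$, $a(s,t):=(\alpha_m(s))_m\in\ell^2$ and $b(t,u):=\bigl(\sum_n\theta_{m,n}(t)\beta_n(u)\bigr)_m\in\ell^2$, one has $\langle a(s,t),b(t,u)\rangle_{\ell^2}=f^{[2]}(s,t,u)$, with $\sup_{s,t}\|a(s,t)\|_2\leq 1$ and $\sup_{t,u}\|b(t,u)\|_2\leq 1$ from the bounds above; coordinatewise continuity plus the uniform $\ell^2$-bound makes $a$ and $b$ bounded Borel functions $\mathbb{R}^2\to\ell^2$, giving \eqref{Hypf}. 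The principal obstacle throughout is the continuity of $\theta_{m,n}$; this is precisely what the separability reduction and WOT-compactness argument are designed to deliver, and it does not follow from the bare duality statement.
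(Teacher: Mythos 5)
Your overall strategy is the same as the paper's (reduce to separable subspaces, expand in Hilbertian bases, and fight for the continuity of $\theta_{m,n}$), but your reduction step has a genuine gap that propagates to the two places where you use it. You project $\alpha$ onto $E_0=\overline{\mathrm{span}}\{\theta(t)\beta(u)\}$ and $\beta$ onto $F_0=\overline{\mathrm{span}}\{\theta(t)^*\alpha(s)\}$. This does make the projected $\alpha$ weakly continuous into $E_0$, but it does \emph{not} make the range of the projected $\alpha$ total in $E_0$ (nor that of $\beta$ total in $F_0$): $E_0$ is spanned by the vectors $\theta(t)\beta(u)$, and $P_{E_0}\alpha(\mathbb{R})$ can span a proper closed subspace (take $\alpha$ constant while $\{\theta(t)\beta(u)\}$ spans a two-dimensional space). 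Consequently: (a) your separability claim is unsupported --- to show that $\{\theta(t)\beta(u):t,u\in D\}$ is total in $E_0$ for a countable dense $D\subset\mathbb{R}$ you would need $(t,u)\mapsto\langle v,\theta(t)\beta(u)\rangle$ to be separately continuous for \emph{every} $v\in E_0$, whereas separate continuity of $\phi$ only controls such pairings against vectors approximable by the $\alpha(s)$ (in the $t$-variable) or vectors $v$ with $\theta(t)^*v$ approximable by the $\theta(t')^*\alpha(s')$ (in the $u$-variable), neither of which holds for a general $v\in E_0$; and (b) the identification $T=\theta(t)$ of the WOT cluster point is exactly where you invoke the (false in general) totality of the ranges of $\alpha$ and $\beta$ in $E_0$ and $F_0$, so the continuity of $\theta_{m,n}$ --- which you rightly single out as the crux --- is not established.

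The fix is the paper's ordering of the reductions: first replace $E$ and $F$ by the closed linear spans of the ranges of $\alpha$ and $\beta$ (compressing $\theta$ accordingly), so that $\alpha$ and $\beta$ may be assumed to have dense range; only then cut down to $H=\overline{\mathrm{span}}\{\theta(t)f\}$ and $K=\overline{\mathrm{span}}\{\theta(t)^*e\}$. With that order, $\tilde\alpha=P_H\alpha$ and $\tilde\beta=P_K\beta$ automatically have dense range in $H$ and $K$, \emph{and} $H$, $K$ contain the ranges of the $\theta(t)$, $\theta(t)^*$, so the factorization survives the compressions. Separability of $K$ then follows from the continuity of $u\mapsto\langle k,\beta(u)\rangle$ for every $k\in K$ (obtained from the density of $\{\theta(t)^*\alpha(s)\}$ in $K$ and the boundedness of $\beta$) together with the density of the range of $\beta$: the countable total set is $\{\tilde\beta(u):u\in\mathbb{Q}\}$, not your $\{\theta(t)\beta(u):t,u\in D\}$. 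Once the ranges of $\tilde\alpha$, $\tilde\beta$ are dense, your WOT-compactness argument for the continuity of $\theta_{m,n}$ would go through, but it is then unnecessary: a direct approximation of $h\in H$, $k\in K$ by $\tilde\alpha(s)$, $\tilde\beta(u)$, combined with $\sup_t\|\tilde\theta(t)\|\le 1$, yields the continuity of $t\mapsto\langle h,\tilde\theta(t)k\rangle$ immediately, which is what the paper does. Your final deduction of \eqref{Hypf} from \eqref{factophi2} is fine.
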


\begin{proof}
We adapt some ideas from the proof of \cite[Theorem 2.2.4]{APeller}. We can assume in the factorization \eqref{factophi} of $\phi$ that $\alpha$ and $\beta$ have a dense range since we can compose on the left $\alpha$ and $\beta$ by the orthogonal projection onto the closure of their range. Let
\begin{equation*}
K = \overline{\left\lbrace \theta(t)^*e \ | \ t\in \mathbb{R}, e\in E \right\rbrace} \subset F
\end{equation*}
and
\begin{equation*}
H = \overline{\left\lbrace \theta(t)f \ | \ t\in \mathbb{R}, f\in F \right\rbrace} \subset E.
\end{equation*}
Let $P_K : F \rightarrow K$ and $P_H : E \rightarrow H$ be the corresponding orthogonal projections.
Since $\phi$ is continuous with respect to the third variable, we deduce that for all $(s,t) \in \mathbb{R}^2$,
\begin{equation*}
u \mapsto \left\langle \theta(t)^*\alpha(s), \beta(u) \right\rangle
\end{equation*}
is continuous. By boundedness of $\beta$ and by density, this implies that for all $k\in K$,
\begin{equation*}
u \mapsto \left\langle k, \beta(u) \right\rangle
\end{equation*}
is continuous. Let $k\in \left\lbrace P_K\beta(u) \ | \ u \in \mathbb{Q} \right\rbrace^\bot$. Then, for any $s\in \mathbb{Q}$,
\begin{equation*}
\left\langle k,\beta(s) \right\rangle = 0,
\end{equation*}
and by continuity, this equality holds true for any $s\in \mathbb{R}$. Since the range of $\beta$ is dense, this implies that $k=0$. This shows that $K = \overline{\left\lbrace P_K\beta(u) \ | \ u \in \mathbb{Q} \right\rbrace}$, so $K$ is separable. Similarly, we show that for all $h\in H$,
\begin{equation*}
s \mapsto \left\langle \alpha(s), h \right\rangle
\end{equation*}
is continuous and that $H$ is separable.

Define
$$
\tilde{\alpha} = P_H \alpha : \mathbb{R} \rightarrow H, \ \tilde{\beta} = P_K \beta : \mathbb{R} \rightarrow K
$$
and
$$
\tilde{\theta} = P_H \theta_{|K} : \mathbb{R} \rightarrow \mathcal{B}(K,H).
$$
Then, it is easy to check that
$$
\phi(s,t,u) = \left\langle \tilde{\alpha}(s), \tilde{\theta}(t)\tilde{\beta}(u) \right\rangle,\quad \forall (s,t,u)\in \mathbb{R}^3.
$$
Since $\tilde{\alpha}$ and $\tilde{\beta}$ have a dense range, the continuity of $\phi$ in the second variable implies that for any $h \in H$ and any $k\in K$, the function
$$t \mapsto \left\langle h, \tilde{\theta}(t)k \right\rangle$$
is continuous.

Finally, let $(e_m)_{m\geq 1}$ and $(f_n)_{n\geq 1}$ be hilbertian bases of $H$ and $K$, respectively. We set, for any $n,m \geq 1$,
$$
\alpha_m(s) = \left\langle \tilde{\alpha}(s), e_m \right\rangle, \ \beta_n(u) = \left\langle f_n, \tilde{\beta}(u)\right\rangle \ \ \text{and} \ \ \theta_{m,n}(t) = \left\langle e_m, \tilde{\theta}(t) f_n\right\rangle.
$$
The latter implies that $\alpha_m$, $\beta_n$ and $\theta_{m,n}$ are continuous and satisfy \eqref{factophi2}.
\end{proof}

\vskip 1cm
\noindent
{\bf Acknowledgements.}
Research of C.C. and C.L. was supported by the French ``Investissements d'Avenir"
program, project ISITE-BFC (contract ANR-15-IDEX-03). Research of C.C. was also
supported in part by NSF grant DMS-1554456. Research of A.S. was supported
in part by NSF grant DMS-1500704. Research of F.S. was supported by
ARC Discovery grant DP150100920.
\\

\vskip 1cm

\end{document}